\title{The Erd\H{o}s-Pósa property for infinite graphs}
\author{Thilo Krill}
\address{Universit\"at Hamburg, Department of Mathematics, Bundesstrasse 55 (Geomatikum), 20146 Hamburg, Germany}
\email{thilo.krill@uni-hamburg.de}
\let\polishlcross=\l
\def\l{\ifmmode\ell\else\polishlcross\fi}
\let\theta=\vartheta
\let\rho=\varrho
\let\phi=\varphi
\def\NN{\mathbb N}
\def\cC{{\mathcal C}}
\def\cF{{\mathcal F}}
\def\cG{{\mathcal G}}
\def\cH{{\mathcal H}}
\def\cM{{\mathcal M}}
\def\cP{{\mathcal P}}
\def\cS{{\mathcal S}}
\def\cT{{\mathcal T}}
\def\cU{{\mathcal U}}
\def\cV{{\mathcal V}}
\def\cW{{\mathcal W}}
\theoremstyle{plain}
\newtheorem{thm}{Theorem}[section]
\newtheorem{prop}[thm]{Proposition}
\newtheorem{lemma}[thm]{Lemma}
\newtheorem{problem}[thm]{Problem}
\newtheorem*{thm*}{Theorem}
\newtheorem*{problem*}{Problem}
\theoremstyle{definition}
\newtheorem{defn}[thm]{Definition}
\newtheorem{case}{Case}
\newtheorem*{defn*}{Definition}
\newcommand{\overcirc}[1]{\overset{\circ}{#1}}
\DeclareMathOperator{\successor}{succ}
\DeclareMathOperator{\br}{br}
\DeclareMathOperator{\cf}{cf}
\begin{document}

\begin{abstract}
We investigate which classes of infinite graphs have the Erd\H{o}s-Pósa property (EPP).
In addition to the usual EPP, we also consider the following infinite variant of the EPP: a class $\mathcal{G}$ of graphs has the $\kappa$-EPP, where $\kappa$ is an infinite cardinal, if for any graph $\Gamma$ there are either $\kappa$ disjoint graphs from $\mathcal{G}$ in $\Gamma$ or there is a set $X$ of vertices of $\Gamma$ of size less than $\kappa$ such that $\Gamma - X$ contains no graph from $\mathcal{G}$.

In particular, we study the ($\kappa$-)EPP for classes consisting of a single infinite graph $G$. We obtain positive results when the set of induced subgraphs of $G$ is labelled well-quasi-ordered, and negative results when $G$ is not a proper subgraph of itself (both results require some additional conditions).
As a corollary, we obtain that every graph which does not contain a path of length $n$ for some $n \in \mathbb{N}$ has the EPP and the $\kappa$-EPP.
Furthermore, we show that the class of all subdivisions of any tree $T$ has the $\kappa$-EPP for every uncountable cardinal $\kappa$, and if $T$ is rayless, also the $\aleph_0$-EPP and the EPP.
\end{abstract}

\maketitle

\section{Introduction}
\label{sec:introduction}

\subsection{The Erd\H{o}s-Pósa property}

Erd\H{o}s and Pósa \cite{erdosposa1965} established the following landmark result:

\begin{thm}\label{thm:original_Erdos-Posa}
There is a function $f: \NN \to \NN$ such that for every graph $\Gamma$ and every $k \in \NN$ one of the following holds:
\begin{itemize}
\item $\Gamma$ contains $k$ disjoint cycles, or
\item there is set $X \subseteq V(\Gamma)$ of size at most $f(k)$ such that $\Gamma - X$ is a forest.
\end{itemize}
\end{thm}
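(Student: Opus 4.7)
The plan is to (i) reduce $\Gamma$ to a topological minor of minimum degree at least three by a standard cleaning, (ii) prove by induction on $k$ a packing lemma saying that every sufficiently large graph of minimum degree at least three contains $k$ vertex-disjoint cycles, and (iii) conclude that in any graph lacking $k$ disjoint cycles the cleaned subgraph is already small enough to serve directly as a feedback vertex set.

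For (i) I would iteratively delete vertices of degree at most one (they lie on no cycle) and suppress vertices of degree exactly two (replacing each path $uvw$ through a degree-two vertex $v$ by the edge $uw$) until none remain. This produces a graph $H$ of minimum degree at least three with a natural bijection between cycles of $H$ and cycles of $\Gamma$ (by reinserting the suppressed arcs); in particular $H$ has $k$ vertex-disjoint cycles if and only if $\Gamma$ does, and any feedback vertex set for $H$ is also a feedback vertex set for $\Gamma$.

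For (ii) I would establish the existence of a function $N \colon \NN \to \NN$ such that every graph of minimum degree at least three on at least $N(k)$ vertices contains $k$ vertex-disjoint cycles. The argument is by induction on $k$, with a trivial base $k = 1$. For the inductive step I would use the Moore-type bound obtained from a breadth-first search: in a min-degree-three graph on $n$ vertices the BFS tree rooted at any vertex must close a non-tree edge within depth $\lceil \log_2 n \rceil$, yielding a cycle of length at most $2\lceil \log_2 n \rceil + 1$. Deleting the vertices of such a short cycle $C$ and recleaning the residual graph to restore minimum degree three, the induction hypothesis supplies $k-1$ further disjoint cycles, which together with $C$ give the required $k$. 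Granting the lemma, the theorem follows: if $\Gamma$ has no $k$ disjoint cycles, then neither does $H$, so by the contrapositive $|V(H)| < N(k)$, and $X := V(H)$ is a feedback vertex set for $\Gamma$; setting $f(k) := N(k) - 1$ finishes the proof.

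The main obstacle I expect is the bookkeeping in step (ii): controlling how many vertices are destroyed by the recleaning of $H - V(C)$, since cascading degree-one deletions can ripple through the graph after the removal of $V(C)$. A standard amortized argument should charge each cleaned vertex to an edge of $C$, bounding the total loss by $O(|V(C)|)$; combined with the Moore bound $|V(C)| = O(\log |V(H)|)$ this sets up a self-consistent recursion of the form $N(k) \geq N(k-1) + O(\log N(k))$, which is easily satisfied by an exponentially growing $N$. Once this constant is pinned down, the lemma and hence the theorem drop out, and one obtains an explicit (single-exponential) upper bound on $f(k)$.
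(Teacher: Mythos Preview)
First, note that the paper does not prove this theorem at all; it is quoted in the introduction as the classical 1965 result of Erd\H{o}s and P\'osa and merely cited. So there is nothing in the paper to compare against, and I evaluate your proposal on its own merits.

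There is a genuine error, and it lies deeper than the bookkeeping you flag: the packing lemma in step~(ii) is \emph{false} as stated. You claim that every sufficiently large graph of minimum degree at least three contains $k$ vertex-disjoint cycles, but this already fails for $k=2$. Take the wheel $W_n$: a centre vertex $c$ joined to every vertex of an $n$-cycle $d_1 d_2 \cdots d_n d_1$. This graph has minimum degree~$3$ and $n+1$ vertices, yet any two of its cycles meet --- every cycle other than the outer rim passes through $c$, while the outer rim uses all of the $d_i$. Hence no finite $N(2)$ can exist. The same example pinpoints why the amortization you propose cannot be rescued: removing the short triangle $c\,d_1\,d_2$ from $W_n$ leaves a bare path $d_3\cdots d_n$, which the recleaning annihilates completely; the loss is $\Theta(n)$, not $O(|V(C)|)$. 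The problem is that your cleaning in step~(i) allows arbitrarily high degrees, and one high-degree vertex on the short cycle can dominate the whole graph.

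The standard remedy is to reduce not to minimum degree at least~$3$ but to degree \emph{exactly}~$3$. One route (essentially Diestel's textbook proof): take $H$ to be a \emph{maximal} subgraph of $\Gamma$ in which every vertex has degree $2$ or $3$, and let $U$ be its set of degree-$3$ vertices. Suppressing the degree-$2$ vertices yields a cubic multigraph on vertex set $U$, and now your short-cycle induction is sound: in a cubic graph a cycle of length $\ell$ sends at most $\ell$ edges to the outside, so the total degree loss and hence the cascade is $O(\ell)$. Finally one uses the maximality of $H$ to argue that every cycle of $\Gamma$ meets $U$, so that $X:=U$ serves as the feedback vertex set whenever $|U|$ is small. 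Your overall three-step architecture is the right one; only the target of the reduction in step~(i) needs to change.
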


The following definition makes it possible to investigate whether theorems of this type hold for graphs other than cycles:

\begin{defn}\label{def:EPP}
A class $\cG$ of graphs has the \emph{Erd\H{o}s-Pósa property (EPP)} if there is a function $f:\NN\to\NN$ such that for every graph $\Gamma$ (referred to as host graph) and every $k \in \NN$ one of the following holds:
\begin{itemize}
\item $\Gamma$ contains $k$ disjoint copies of graphs from $\cG$ as subgraphs, or
\item there is set $X \subseteq V(\Gamma)$ of size at most $f(k)$ such that $\Gamma - X$ does not contain any copy of a graph from $\cG$ as a subgraph.
\end{itemize}
\end{defn}

For an overview of known results on classes of graphs $\cG$ which have or do not have the EPP, see \cite{raymond2017recent}.
The graphs in $\cG$ and the graph $\Gamma$ in Definition \ref{def:EPP} are usually required to be finite, but here we allow them to be infinite.
A first natural question would be whether known results for finite graphs
(such as Theorem \ref{thm:original_Erdos-Posa})
still hold if the host graph $\Gamma$ can be infinite.
In fact, this follows from a simple compactness argument (Proposition \ref{prop:finite_EPP}).

However, a completely new type of problem arises when not only $\Gamma$ but also the graphs in $\cG$ are infinite.
The author is not aware of any previous work on this topic.
In this paper, we focus mainly on the special case where $\cG$ consists of a single infinite graph.
If $\cG = \{ G \}$ and $\cG$ has the EPP, then we also say that $G$ has the \emph{EPP}.

\begin{problem}
Which infinite graphs have the EPP?
\end{problem}

This problem may seem surprising, considering that every finite graph trivially has the EPP (Proposition \ref{prop:finite_graphs_have_EPP}).
However, not every infinite graph has the EPP; the simplest counterexample is the ray (Proposition \ref{prop:ray_does_not_have_EPP}).
More counterexamples are given in Proposition \ref{prop:finitely_many_rays}.
On the other hand, we prove that rayless graphs with a certain additional property have the EPP:
we say that a class $\cC$ of graphs is \emph{labelled well-quasi-ordered (lwqo)} if, for any finite set $L$, the class of all graphs from $\cC$ with vertices labelled by elements of $L$ is well-quasi-ordered (wqo) by label-preserving subgraph embeddings.

\begin{restatable}{thm}{introEPP}\label{thm:intro_EPP}
Let $G$ be any rayless graph such that the set of induced subgraphs of $G$ is lwqo. Then $G$ has the EPP.
\end{restatable}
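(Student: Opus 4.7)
I would prove the theorem by transfinite induction on the rank of the rayless graph $G$ in the sense of Schmidt's decomposition theorem for rayless graphs. The base case is when $G$ has rank $0$, i.e.\ $G$ is finite; here the claim is immediate from Proposition \ref{prop:finite_graphs_have_EPP}.

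For the inductive step, let $G$ have rank $\alpha > 0$. Schmidt's theorem supplies a finite ``kernel'' $S \subseteq V(G)$ with the property that every component of $G-S$ has rank strictly less than $\alpha$. Each component $C$ of $G - S$, equipped with the finite labelling recording its attachment pattern to $S$ (which vertex of $S$ each vertex of $C$ is adjacent to), is an induced subgraph of $G$. By the inductive hypothesis each such labelled component inherits the EPP, and by assumption the whole collection of such labelled induced subgraphs of $G$ is still wqo under label-preserving embeddings.

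Given a host graph $\Gamma$ and $k \in \NN$, I would first pick a maximal disjoint family $H_1,\dots,H_m$ of copies of $G$ in $\Gamma$. If $m \ge k$ we are done, so assume $m < k$. Let $S_i \subseteq V(H_i)$ be the image of $S$ in $H_i$ and set $Y := \bigcup_{i \le m} S_i$, a set of size at most $(k-1)|S|$. By maximality, any further copy of $G$ in $\Gamma - Y$ must meet some $H_j$ outside $S_j$, hence one of its components (in the sense of $G - S$) overlaps a component of $H_j - S_j$; since these components have rank strictly less than $\alpha$, the inductive EPP gives a bound on how many additional vertices are needed to hit each specific component-type. The labelled wqo hypothesis then caps the number of distinct labelled component-types that can arise in this iterative hitting process, forcing the procedure to terminate after a bounded-in-$k$ number of steps and producing the desired bound $f(k)$.

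\textbf{Main obstacle.} The genuine difficulty is making the wqo step quantitative: labelled wqo delivers no a-priori numerical bound on the length of a bad sequence, whereas the EPP demands a function $f(k)$ uniform in $\Gamma$. I expect the cleanest route is a minimal-counterexample argument run in parallel with the induction: assuming no such $f(k)$ exists, extract from a sequence of counterexamples an infinite sequence of obstructing labelled induced subgraphs of $G$, and show that a good pair in this sequence (guaranteed by lwqo) can be used to enlarge a disjoint family of copies of $G$ in one of the counterexamples, contradicting its status as a counterexample. Setting up the labels so that the embedding furnished by lwqo transports enough structure to rebuild a genuine copy of $G$ -- and doing so compatibly with the rank-induction -- is the crux of the argument.
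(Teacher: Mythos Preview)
Your plan diverges substantially from the paper's proof and, as you yourself suspect, the obstacle you name is genuine and not obviously surmountable along the lines you sketch.

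The paper does not induct on Schmidt rank. Instead it takes a tree-decomposition $(T,\cV)$ of $G$ into finite parts with $T$ rayless (Lemma~\ref{lem:rayless_tree-decomposition}) and uses the lwqo hypothesis in a single stroke to show that the \emph{$\aleph_0$-closure} $U$ of the root of $T$ is finite (Lemma~\ref{lem:kappa_closure}). The point of $U$ is that every ``suitable'' embedding of the finite graph $G(U)$ into $\Gamma$ extends to an embedding of all of $G$ (Lemma~\ref{lem:extending_hordes}). This immediately gives an \emph{explicit} function $f(k)=|G(U)|\cdot(k-1)$: one greedily places $k$ disjoint suitable copies of $G(U)$, then extends each to a copy of $G$. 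No transfinite induction, no minimal-counterexample extraction, and no need to quantify a wqo bound.

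Your inductive scheme, by contrast, never identifies such a finite core. The step ``the inductive EPP gives a bound on how many additional vertices are needed to hit each specific component-type'' conflates two different problems: the induction hypothesis yields an EPP for each component $C$ \emph{as a standalone graph}, but what you need is to hit copies of $G$, and a copy of $G$ in $\Gamma-Y$ need not restrict to a copy of any particular $C$ in any controlled location. Moreover, even granting finitely many component-types up to labelled embedding, you would still need to combine infinitely many (a priori unrelated) EPP functions for the individual components into a single $f$, and your minimal-counterexample sketch does not explain how a good pair among labelled components of $G-S$ rebuilds a \emph{full} disjoint copy of $G$ in a counterexample $\Gamma$. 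The paper's $\aleph_0$-closure is precisely the device that short-circuits all of this: it packages the entire cascade of ``which branches can be rerouted into which'' into one finite object determined by $G$ alone.
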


An example of a class of graphs that is lwqo is the class of all graphs that exclude a path of fixed length as a subgraph.
This was shown for finite graphs by Ding \cite{ding1992subgraphs} and generalised to infinite graphs by Jia \cite{jia2015excluding}.
Hence we obtain the following corollary of Theorem \ref{thm:intro_EPP}:

\begin{restatable}{cor}{excludedpathEPP}\label{cor:excluded_path_EPP}
Let $G$ be a graph that does not contain a path of length $n$ as a subgraph for some $n \in \NN$. Then $G$ has the EPP.
\end{restatable}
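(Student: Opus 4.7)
The plan is simply to apply Theorem \ref{thm:intro_EPP} to $G$. That theorem has two hypotheses, so I need to verify (a)~that $G$ is rayless, and (b)~that the set of induced subgraphs of $G$ is lwqo.

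Hypothesis (a) is immediate. A ray contains, as a subgraph, a path of every finite length. So any graph containing a ray contains a path of length $n$ for every $n \in \NN$; contrapositively, a graph excluding a path of length $n$ excludes every ray. Hence $G$ is rayless.

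For hypothesis (b), I would argue that the class of induced subgraphs of $G$ sits inside a class already known to be lwqo. Since every induced subgraph of $G$ is in particular a subgraph of $G$, and any subgraph of a $P_n$-free graph is still $P_n$-free, each induced subgraph of $G$ excludes a path of length $n$ as a subgraph. The cited results of Ding \cite{ding1992subgraphs} (for finite graphs) and Jia \cite{jia2015excluding} (for infinite graphs) assert that the class of \emph{all} graphs excluding a path of length $n$ is lwqo under label-preserving subgraph embeddings. Being lwqo is a property inherited by subclasses (any infinite labelled sequence in the subclass is an infinite labelled sequence in the whole class, hence contains the required comparable pair), so the set of induced subgraphs of $G$ is lwqo.

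With (a) and (b) in hand, Theorem \ref{thm:intro_EPP} immediately gives that $G$ has the EPP. There is no real obstacle within the corollary itself: all the genuine work is packaged into Theorem \ref{thm:intro_EPP} on the Erd\H{o}s--Pósa side and into the Ding/Jia lwqo theorem on the well-quasi-ordering side. The only thing to double-check is that the formulation of ``lwqo'' being fed into Theorem \ref{thm:intro_EPP} matches exactly the one proved by Ding and Jia, and that it is indeed closed under passage to subclasses, both of which follow directly from the definition given in the excerpt.
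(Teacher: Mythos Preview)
Your proposal is correct and follows essentially the same route as the paper: verify that excluding $P_n$ forces $G$ to be rayless, invoke Jia's theorem that the class of $P_n$-free graphs is lwqo (hence so is the subclass of induced subgraphs of $G$), and apply Theorem~\ref{thm:intro_EPP}. The paper's proof is terser---it leaves the rayless check and the subclass-closure of lwqo implicit---but the argument is identical.
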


For more results on labelled well-quasi-ordering see \cites{atminas2015labelled, brignall2018counterexample, daligault1990well, korpelainen2011two, pouzet1972un}.
We believe that extending lwqo results to infinite graphs is an interesting problem and will lead to further applications of Theorem \ref{thm:intro_EPP}.

\subsection{The $\kappa$-Erd\H{o}s-Pósa property}

We also consider the following infinite version of the Erd\H{o}s-Pósa property, which will turn out to behave similarly to the ``normal'' Erd\H{o}s-Pósa property:

\begin{defn}
Let $\kappa$ be any infinite cardinal. We say that a class $\cG$ of graphs has the \emph{$\kappa$-Erd\H{o}s-Pósa property ($\kappa$-EPP)} if for every graph $\Gamma$ one of the following holds:
\begin{itemize}
\item $\Gamma$ contains $\kappa$ disjoint copies of graphs from $\cG$ as subgraphs, or
\item there is set $X \subseteq V(\Gamma)$ of size less than $\kappa$ such that $\Gamma - X$ does not contain any copy of a graph from $\cG$ as a subgraph.
\end{itemize}
\end{defn}

Again, the author is not aware of any previous work on this topic.
We say that a graph $G$ has the EPP if the class $\{ G \}$ does.

\begin{problem}
Which infinite graphs have the $\kappa$-EPP?
\end{problem}

Addressing this problem, we offer the following results:

\begin{restatable}{thm}{introalephnaughtEPP}\label{thm:intro_aleph_0-EPP}
Let $G$ be any rayless graph such that the set of induced subgraphs of $G$ is lwqo. Then $G$ has the $\aleph_0$-EPP.
\end{restatable}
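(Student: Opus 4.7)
The plan is to deduce Theorem~\ref{thm:intro_aleph_0-EPP} from Theorem~\ref{thm:intro_EPP} by means of the following \emph{compactness lemma}: if a graph $\Gamma$ contains $k$ pairwise disjoint copies of $G$ for every $k\in\NN$, then $\Gamma$ contains $\aleph_0$ pairwise disjoint copies of $G$. Granting this lemma, the deduction is immediate. Suppose $\Gamma$ does not contain $\aleph_0$ pairwise disjoint copies of $G$. Then by (the contrapositive of) the lemma there is some $k_0\in\NN$ for which $\Gamma$ contains no family of $k_0$ pairwise disjoint copies of $G$. Applying Theorem~\ref{thm:intro_EPP} at $k_0$ yields a set $X\subseteq V(\Gamma)$ of size at most $f(k_0)$ meeting every copy of $G$ in $\Gamma$; since $f(k_0)<\aleph_0$, this $X$ is precisely the hitting set required by the $\aleph_0$-EPP.

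The substance of the argument is therefore the compactness lemma. When $G$ is finite it admits a short Zorn's-lemma argument: let $\cM$ be a maximal (by inclusion) pairwise disjoint family of copies of $G$ in $\Gamma$. If $\cM=\{H_1,\dots,H_n\}$ were finite then, by maximality, the finite set $U:=\bigcup_i V(H_i)$ would be a hitting set, and any pairwise disjoint family $\cF$ of copies of $G$ would satisfy $|\cF|\leq|U|$, since distinct members of $\cF$ must contribute pairwise disjoint non-empty subsets of $U$; this contradicts the hypothesis that arbitrarily large finite disjoint families exist. Hence $\cM$ is infinite, and any countable subfamily of $\cM$ gives the desired $\aleph_0$ pairwise disjoint copies.

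For infinite $G$ this argument collapses, because the corresponding set $U$ is infinite and therefore imposes no size restriction on pairwise disjoint families. At this point the hypotheses on $G$ enter the picture. Since $G$ is rayless it admits a Schmidt rank, and I would establish the compactness lemma by transfinite induction on this rank, in parallel with the proof of Theorem~\ref{thm:intro_EPP}. At each step one picks a finite set $S\subseteq V(G)$ witnessing the rank (so that every component of $G-S$ has strictly smaller rank) and uses the lwqo hypothesis on the induced subgraphs of $G$ to extract, from a presumed unboundedly-large-but-finite family of pairwise disjoint copies of $G$ in $\Gamma$, an infinite subfamily whose $S$-images and whose component pieces in $G-S$ fit together in a well-controlled way; one then appeals to the inductive hypothesis on the components. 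The principal obstacle is performing this extraction while simultaneously preserving disjointness across the contributions of the various components of $G-S$, and this is precisely where \emph{labelled} wqo, rather than mere wqo, appears to be indispensable: the labels record how partial copies attach to the images of $S$ and thereby certify that the infinite sub-families assembled for different components can be combined into a single infinite pairwise disjoint family of copies of $G$.
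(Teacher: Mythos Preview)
Your reduction is sound: if $G$ has the EPP and the ``compactness lemma'' (which is precisely the statement that $G$ is \emph{ubiquitous}) holds, then $G$ has the $\aleph_0$-EPP. But this just relocates the entire difficulty to the ubiquity statement, which you do not prove. The paper explicitly notes (Section~1.7) that ubiquity of such $G$ was not previously known and is obtained \emph{as a corollary} of Theorem~\ref{thm:intro_aleph_0-EPP}, not as an input to it. So your compactness lemma is not a shortcut; it is the theorem in disguise.

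Your Schmidt-rank sketch has a structural gap beyond mere vagueness. The inductive hypothesis you seem to invoke is ``each component $C$ of $G-S$ is ubiquitous,'' but that tells you only that $\Gamma$ contains infinitely many disjoint copies of each individual $C$; it says nothing about assembling infinitely many disjoint copies of \emph{all of $G$ simultaneously}, with the images of $S$ and of the (possibly infinitely many) components correctly glued and mutually disjoint. The ``principal obstacle'' you flag is exactly this, and labelling alone does not dissolve it: one must track, for each partial copy, how it can be \emph{extended} to a full copy while avoiding the other partial copies already placed. The paper handles this not via Schmidt rank but via a rayless tree-decomposition $(T,\cV)$ of $G$ into finite parts (Lemma~\ref{lem:rayless_tree-decomposition}), and builds the infinite family directly: it defines $\kappa$-closures and \emph{hordes} (suitable families of partial embeddings indexed by subtrees of $T$), proves that an $\aleph_0$-horde on a small subtree exists (Lemma~\ref{lem:regular_hordes}\,\ref{item:Gamma-X} with $\kappa=\aleph_0$), and then extends it to a $T$-horde (Lemma~\ref{lem:extending_hordes} with $\mu=\aleph_1$). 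The lwqo hypothesis enters through Lemma~\ref{lem:kappa_embeddable} to bound the size of $\kappa$-closures. Any honest completion of your inductive step would need machinery of comparable strength, so the detour through Theorem~\ref{thm:intro_EPP} does not buy anything.
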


\begin{restatable}{thm}{introkappaEPP}\label{thm:intro_kappa-EPP}
Let $G$ be any graph admitting a tree-decomposition into finite parts such that the set of induced subgraphs of $G$ is lwqo. Then $G$ has the $\kappa$-EPP for every uncountable cardinal $\kappa$.
\end{restatable}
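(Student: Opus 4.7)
Fix a host graph $\Gamma$ that contains no $\kappa$ pairwise disjoint copies of $G$; the task is to produce a hitting set $X\subseteq V(\Gamma)$ with $|X|<\kappa$. The natural first move is to split on the size of $G$.

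The easy case is $|V(G)|<\kappa$. Take a maximal pairwise disjoint family $\cF$ of copies of $G$ in $\Gamma$, so $|\cF|<\kappa$ by assumption. Setting $X:=\bigcup_{H\in\cF}V(H)$, infinite cardinal arithmetic for uncountable $\kappa$ gives $|X|\le|\cF|\cdot|V(G)|<\kappa$, and maximality of $\cF$ ensures that $\Gamma-X$ contains no copy of $G$.

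The main case is $|V(G)|\ge\kappa$. Here the plan is to show that the hypotheses already force $G$ itself to contain $\kappa$ pairwise disjoint subgraphs isomorphic to $G$; granting this claim, either $\Gamma$ has no copy of $G$ at all (so $X=\emptyset$ works) or any single copy of $G$ in $\Gamma$ internally supplies $\kappa$ disjoint copies of $G$, contradicting the assumption on $\Gamma$. To establish the claim, fix a tree-decomposition $(T,(V_t)_{t\in V(T)})$ of $G$ with all $V_t$ finite; since each bag is finite and $|V(G)|\ge\kappa$, we have $|V(T)|\ge\kappa$. Using the combinatorics of uncountable trees (a $\kappa$-branching vertex, or a ray carrying $\kappa$ pairwise disjoint attached subtrees), extract $\kappa$ pairwise vertex-disjoint subtrees $T_\alpha\subseteq T$ with associated induced subgraphs $H_\alpha:=G[\bigcup_{t\in T_\alpha}V_t]$. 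Each $H_\alpha$ carries a finite label recording its adhesion to $T\setminus T_\alpha$; applying the lwqo hypothesis on induced subgraphs of $G$ to the family $\{H_\alpha:\alpha<\kappa\}$ should yield, for at least $\kappa$ indices $\alpha$, a label-preserving embedding of $G$ into $H_\alpha$, and these embeddings deliver $\kappa$ disjoint copies of $G$ inside $G$.

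The principal obstacle is this last extraction step. Labelled well-quasi-ordering is a priori a statement about arbitrary infinite sequences, and converting it into $\kappa$ \emph{simultaneous} label-preserving embeddings of $G$ requires genuine additional work. I expect a transfinite iteration alternating (i) pigeonhole on the finite adhesion labels to stabilise the label along a $\kappa$-sized subfamily, with (ii) Higman-style lwqo arguments producing an embedding of $G$ into each remaining $H_\alpha$, exploiting the finiteness of the bags and the cofinality properties of $\kappa$. A secondary technical point, handled by refining the tree-decomposition if necessary, is to guarantee that the chosen subtrees $T_\alpha$ induce pairwise vertex-disjoint subgraphs of $G$ and not merely cover disjoint node-sets of $T$. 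Making the interaction between the uncountable-tree combinatorics and the lwqo hypothesis precise is, I think, the heart of the argument, and this is exactly where both the finite-bags condition and the labelled nature of the well-quasi-ordering enter in an essential way.
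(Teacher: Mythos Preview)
Your easy case $|V(G)|<\kappa$ is fine and coincides with the paper's Proposition~\ref{small_graphs_have_kappa-EPP}. The main case, however, rests on a claim that is simply false.

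You assert that under the hypotheses, if $|V(G)|\ge\kappa$ then $G$ contains $\kappa$ pairwise disjoint copies of itself. Take $G=K_{1,\aleph_1}$ and $\kappa=\aleph_1$. This graph has a tree-decomposition into finite parts (the edges), and since it contains no path of length~$3$, its induced subgraphs are lwqo by Theorem~\ref{thm:excluded_paths_lwqo}. Yet $K_{1,\aleph_1}$ has exactly one vertex of infinite degree, so it does not contain even two disjoint copies of itself. Thus the reduction ``any single copy of $G$ in $\Gamma$ internally supplies $\kappa$ disjoint copies'' collapses.

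The error is traceable to the step where you invoke lwqo. Applying lwqo to the family $\{H_\alpha:\alpha<\kappa\}$ yields label-preserving embeddings $H_\alpha\hookrightarrow H_\beta$ for suitable $\alpha<\beta$; it says nothing about embeddings of $G$ into any $H_\alpha$. The graph $G$ is not a member of the sequence, and there is no limit mechanism here that would produce such an embedding.

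The paper's argument is structurally different: it never tries to find the copies inside $G$. Instead it assumes $\Gamma-X$ contains a copy of $G$ for every $X$ of size $<\kappa$ and builds the $\kappa$ disjoint copies directly in $\Gamma$, growing them along the tree-decomposition via a transfinite recursion (the ``horde'' machinery, Lemmas~\ref{lem:regular_hordes}, \ref{lem:uncountable_hordes}, \ref{lem:extending_hordes}). The lwqo hypothesis enters through the notion of $\kappa$-closure (Definition~\ref{def:kappa-closure}, Lemma~\ref{lem:kappa_closure}): at a node $t$ of $T$, it guarantees that all but fewer than $\kappa$ of the branches $\overcirc{G}(\br_T(s))$, $s\in\successor_T(t)$, admit hinged embeddings into $\kappa$ other branches, so a partial embedding into $\Gamma$ can always be extended while avoiding the (small) set already used. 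The key point you are missing is that the self-similarity extracted from lwqo is \emph{local}, among sibling branches of the decomposition, and is exploited to redirect partial embeddings inside the host graph $\Gamma$ rather than to manufacture a global self-embedding of $G$.
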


The conditions on $G$ in Theorem \ref{thm:intro_aleph_0-EPP} are the same as in Theorem \ref{thm:intro_EPP}.
In Theorem \ref{thm:intro_kappa-EPP}, the condition that $G$ is rayless is replaced by the much weaker condition of admitting a tree-decomposition into finite parts.
Indeed, every graph with a normal spanning tree has such a tree-decomposition (see \cite{albrechtsen2024linked}*{Theorem 2.2}) and thus also every graph that does not contain a subdivision of an infinite clique by \cite{halin1978simplicial}*{Theorem 10.1}.
For more graphs with normal spanning trees see \cites{jung1969wurzelbaume, diestel2016simple, pitz2021quickly, pitz2020unified, pitz2021proof}.

Theorems \ref{thm:intro_EPP}, \ref{thm:intro_aleph_0-EPP} and \ref{thm:intro_kappa-EPP} are the main results of this paper. Their proofs rely on the same core ideas and we will prove all three theorems in Section \ref{sec:proof_of_main_thm}.

Similar to Corollary \ref{cor:excluded_path_EPP}, we deduce from Theorems \ref{thm:intro_aleph_0-EPP} and \ref{thm:intro_kappa-EPP}:

\begin{restatable}{cor}{excludedpathkappaEPP}\label{cor:excluded_path_kappa-EPP}
Let $G$ be a graph that does not contain a path of length $n$ as a subgraph for some $n \in \NN$. Then $G$ has the $\kappa$-EPP for every infinite cardinal $\kappa$.
\end{restatable}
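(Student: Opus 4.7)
The plan is to verify the hypotheses of Theorems \ref{thm:intro_aleph_0-EPP} and \ref{thm:intro_kappa-EPP} for $G$ and then combine the two conclusions. First I would check the lwqo condition: every induced subgraph of $G$ also excludes a path of length $n$, so the induced subgraphs of $G$ form a subclass of the class of all graphs excluding a path of length $n$ as a subgraph. That larger class is lwqo by Ding \cite{ding1992subgraphs} and Jia \cite{jia2015excluding} (as already invoked for Corollary \ref{cor:excluded_path_EPP}), and the restriction of an lwqo order to a subclass remains lwqo; hence the set of induced subgraphs of $G$ is lwqo.

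For the $\aleph_0$-EPP I would observe that any ray contains a path of every finite length, so the assumption forces $G$ to be rayless. Theorem \ref{thm:intro_aleph_0-EPP} then applies and yields the $\aleph_0$-EPP immediately.

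For uncountable $\kappa$, I need to exhibit a tree-decomposition of $G$ into finite parts. A subdivision of $K_{\aleph_0}$ contains a subdivision of $K_m$ for every $m \in \NN$, and the subdivided Hamilton path of such a $K_m$ has length at least $m-1$; so a subdivision of $K_{\aleph_0}$ contains arbitrarily long paths, and $G$ therefore contains no subdivision of an infinite clique. By \cite{halin1978simplicial}*{Theorem 10.1}, $G$ has a normal spanning tree, and by \cite{albrechtsen2024linked}*{Theorem 2.2}, any graph with a normal spanning tree admits a tree-decomposition into finite parts. Theorem \ref{thm:intro_kappa-EPP} thus gives the $\kappa$-EPP for every uncountable $\kappa$, which together with the $\aleph_0$-EPP establishes the claim.

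I do not expect a genuine obstacle here: the corollary is a direct specialisation of the two main theorems once one notes that ``no path of length $n$'' is a strong enough condition to invoke both the Ding--Jia lwqo result and Halin's theorem on normal spanning trees. The only step that merits a short justification is the passage from ``no long path'' to ``no subdivision of $K_{\aleph_0}$'', which is immediate from the Hamilton-path observation above.
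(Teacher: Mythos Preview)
Your argument is correct and follows the same overall plan as the paper: check the lwqo hypothesis via Jia's theorem, observe that $G$ is rayless, and then invoke Theorems~\ref{thm:intro_aleph_0-EPP} and~\ref{thm:intro_kappa-EPP}. The one difference is in how you produce the tree-decomposition into finite parts for uncountable $\kappa$: you take the detour through normal spanning trees (no $TK_{\aleph_0}$ $\Rightarrow$ normal spanning tree $\Rightarrow$ tree-decomposition into finite parts), whereas the paper simply reuses the raylessness of $G$ already established for the $\aleph_0$ case and applies Lemma~\ref{lem:rayless_tree-decomposition} directly. Your route works, but the paper's is shorter and avoids the heavier machinery of \cite{halin1978simplicial} and \cite{albrechtsen2024linked}; since you already know $G$ is rayless, Lemma~\ref{lem:rayless_tree-decomposition} is the natural tool.
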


\subsection{Graphs without the $\kappa$-EPP for uncountable $\kappa$}

While it is easy to find graphs that do not have the EPP or the $\aleph_0$-EPP (Proposition \ref{prop:ray_does_not_have_EPP}), a bit more work is needed to prove the following theorem:

\begin{restatable}{thm}{counterexamplekappaEPP}\label{thm:counterexample_kappa-EPP}
For every uncountable cardinal $\kappa$ there is a graph that does not have the $\kappa$-EPP.
\end{restatable}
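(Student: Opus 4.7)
The plan is to exhibit, for each uncountable cardinal $\kappa$, an explicit pair $(G,\Gamma)$ witnessing the failure of the $\kappa$-EPP for $G$. I would take $G$ to be the complete graph $K_\kappa$ on $\kappa$ vertices, and let $\Gamma$ be its ``$2$-blow-up'': the graph with vertex set $\kappa \times \{0,1\}$ and with $(x,i)$ adjacent to $(y,j)$ precisely when $x \neq y$. The pairs $P_x := \{(x,0),(x,1)\}$ (for $x < \kappa$) form an independent partition of $V(\Gamma)$, and this partition is the combinatorial heart of the argument.

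The first step is to bound the number of pairwise disjoint copies of $K_\kappa$ in $\Gamma$. Any $\kappa$-clique of $\Gamma$ must contain at most one vertex from each pair $P_x$, since the two vertices within a pair are non-adjacent; and since the clique has $\kappa$ vertices, it must contain \emph{exactly} one vertex from every pair. Thus every copy of $K_\kappa$ in $\Gamma$ is a transversal of $\{P_x : x < \kappa\}$, and two such transversals are vertex-disjoint only if they make opposite choices on every pair. Consequently $\Gamma$ contains at most two pairwise disjoint copies of $K_\kappa$, which is strictly less than $\kappa$ since $\kappa$ is uncountable.

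The second step is to show that no hitting set of size less than $\kappa$ exists. Given $X \subseteq V(\Gamma)$ with $|X| < \kappa$, set $A := \{x < \kappa : P_x \subseteq X\}$. Then $|A| \leq |X| < \kappa$, so $|\kappa \setminus A| = \kappa$; choosing any vertex $v_x \in P_x \setminus X$ for each $x \in \kappa \setminus A$ yields a set of $\kappa$ many vertices with pairwise distinct first coordinates, hence a clique of size $\kappa$ in $\Gamma - X$, i.e.\ a copy of $K_\kappa$. So neither alternative of the $\kappa$-EPP is satisfied, and $\Gamma$ witnesses the failure for $G = K_\kappa$.

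The main (and essentially the only) obstacle is finding the right example; once the blow-up construction is in hand the verification is straightforward cardinal arithmetic. Notice that the argument is in fact insensitive to the exact value of $\kappa$ and works verbatim for $\kappa = \aleph_0$, but the countable case is already covered by Proposition \ref{prop:ray_does_not_have_EPP}, so the genuine content of the theorem lies in its uniform treatment of the uncountable regime.
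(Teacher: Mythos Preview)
Your argument contains a genuine error in the first step. You claim that every copy of $K_\kappa$ in $\Gamma$ must contain \emph{exactly} one vertex from every pair $P_x$, arguing that ``the clique has $\kappa$ vertices'' forces it to be a full transversal. This inference is valid for finite $\kappa$ but fails for infinite $\kappa$: a $\kappa$-sized subset of a $\kappa$-sized set need not be the whole set. Concretely, for any $B \subseteq \kappa$ with $|B| = \kappa$, the set $\{(x,0) : x \in B\}$ is a $\kappa$-clique in $\Gamma$ that misses every pair $P_y$ with $y \notin B$. Worse, this observation kills the entire construction: partition $\kappa$ into $\kappa$ many pairwise disjoint sets $B_i$ each of size $\kappa$, and the cliques $C_i := \{(x,0) : x \in B_i\}$ give $\kappa$ pairwise disjoint copies of $K_\kappa$ in $\Gamma$. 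So $\Gamma$ does not witness the failure of the $\kappa$-EPP for $K_\kappa$.

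The underlying problem is that $K_\kappa$ is a proper subgraph of itself (deleting any fewer than $\kappa$ vertices still leaves a copy of $K_\kappa$), and this self-similarity is precisely what allows many disjoint copies to be packed into $\Gamma$. The paper's proof confronts this issue head-on: it first constructs, via Theorem~\ref{thm:not_proper_selfminor}, a graph $G$ of size $\kappa$ that is \emph{not} a proper subgraph of itself, and then builds a host graph from overlapping copies of $G^{\mathsf V}$ (Theorem~\ref{thm:graph_without_kappa-EPP}). The rigidity of $G$ is exactly what forces any copy of $G^{\mathsf V}$ in the host to coincide with one of the planted copies, yielding the desired bound on disjoint copies. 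Your blow-up idea cannot be repaired without replacing $K_\kappa$ by such a rigid graph, at which point you are essentially reproducing the paper's strategy.
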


An important step in our proof of this theorem is to construct for each uncountable cardinal $\kappa$ a graph of size $\kappa$ which is not a proper subgraph of itself.
Since it does not make our proof more difficult, we prove the following stronger theorem:

\begin{restatable}{thm}{notproperselfminor}\label{thm:not_proper_selfminor}
For every uncountable cardinal $\kappa$ there is a graph of size $\kappa$ which is not a proper minor of itself.
\end{restatable}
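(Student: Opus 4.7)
Fix an uncountable cardinal $\kappa$. The plan is to construct $G$ as a ``bouquet'': a central vertex $r$ joined by an edge to the root of one copy each of a family of rooted trees $(T_\alpha)_{\alpha<\kappa}$. By making the family suitably rigid (in particular, an antichain under rooted minors), I will force any minor embedding of $G$ into itself to be the identity.

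\textbf{Step 1 (rigid antichain).} First I will produce a family $(T_\alpha)_{\alpha<\kappa}$ of rooted trees with $|T_\alpha|<\kappa$ which forms an antichain under the rooted minor order and in which each $T_\alpha$ is individually \emph{rigid}: its only rooted minor embedding into itself is the identity. Since finite trees are well-quasi-ordered by minors, the $T_\alpha$'s must be infinite; explicit antichains of infinite graphs under the minor order (going back to Thomas) yield antichains of size $2^{\aleph_0}\ge \aleph_1$ among countable graphs. For general uncountable $\kappa$ I will stratify by cardinality: for each infinite $\sigma<\kappa$ take an antichain of $\sigma^+$-many rigid rooted trees of size $\sigma$ and take the union of these antichains, so that the total is a rigid antichain of size $\kappa$ among rooted trees of size less than $\kappa$.

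\textbf{Step 2 (assembly and rigidity).} Let $G$ be the tree obtained from a new vertex $r$ by joining it to the root of a disjoint copy of $T_\alpha$ for each $\alpha<\kappa$; then $|V(G)|=\kappa$ by cardinal arithmetic. Given any minor embedding $(B_v)_{v\in V(G)}$ of $G$ into $G$, the vertex $r$ is the unique vertex of $G$ of degree $\kappa$, since every other vertex lies in some $T_\alpha$ with $|T_\alpha|<\kappa$ and so has degree less than $\kappa$. A standard boundary-size argument then forces $r\in B_r$. Because $G$ is a tree, each component of $G-B_r$ is a subtree attached to $B_r$ by a single edge; distinct children of $r$ in the model are placed in distinct components, and one checks inductively that every such component is isomorphic to some $T_\beta$ (after absorbing the ``hanging subtrees'' produced when $B_r$ intrudes into a $T_\beta$). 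The antichain property from Step 1 then forces each abstract copy of $T_\alpha$ to land inside a host copy of $T_\alpha$, and the individual rigidity of $T_\alpha$ forces each branch set within it to be a singleton. Hence the embedding is the identity and $G$ is not a proper minor of itself.

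\textbf{Main obstacle.} The main obstacle is Step 1: constructing the rigid antichain of the required size. A naive recursive attempt, such as letting $G_\alpha$ be a new root joined to one copy of each $G_\beta$ for $\beta<\alpha$, yields a chain (each $G_\alpha$ is a subgraph of $G_{\alpha+1}$) rather than an antichain, and indeed admits the ``shift'' $c_\alpha\mapsto c_{\alpha+1}$ as a proper self-minor of the whole graph; this illustrates why genuine minor-incomparability of the arms is essential. Once the antichain is in hand, the geometric reduction of Step 2 is straightforward and proceeds uniformly for both regular and singular $\kappa$, since the uniqueness of $r$ as the unique vertex of maximum cardinal degree holds in both cases.
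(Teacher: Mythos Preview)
Your Step 1 is impossible as stated. By Nash--Williams' theorem the class of all trees is well-quasi-ordered under topological minors, and Laver's labelled version gives the same for rooted trees; since every topological-minor embedding is in particular a minor embedding, any minor-antichain of (rooted) trees is also a topological-minor-antichain and is therefore finite. Hence there is no infinite minor-antichain of rooted trees, let alone one of size $\kappa$, and a fortiori no family that is in addition termwise rigid. The antichains of Thomas you allude to consist of general graphs, not trees, and there is no way to convert them into trees while preserving minor-incomparability. Your own diagnosis that Step~1 is the ``main obstacle'' is thus an understatement: the bouquet-of-trees architecture cannot work in principle, and the argument of Step~2 never gets off the ground.

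The paper circumvents this by using non-tree building blocks. It invokes Komj\'ath's theorem to obtain a $\kappa$-sized minor-antichain $\mathcal H$ of graphs each of size $\kappa$, then adds two dominating vertices to every member of $\mathcal H$ to make it $2$-connected (this preserves minor-incomparability by an easy lemma). These $2$-connected graphs are glued at single vertices along a $\kappa$-regular tree, so that they become precisely the blocks of the assembled graph $G$. In place of your degree argument, the key structural tool is Oporowski's lemma that for any minor model of $G$ in $G$, each block of the domain embeds (with the restricted branch sets) into a single block of the host; the antichain property then pins every block to itself. Since each vertex of $G$ lies in exactly two blocks and separates them, every branch set is forced to be a singleton. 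Note that no individual rigidity of the antichain members is needed: the block--cutvertex structure does that work.
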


This generalises Oporowski's result \cite{oporowski1990selfminor} that there exists a continuum-sized graph which is not a proper minor of itself.

\subsection{Trees without the $\kappa$-EPP for uncountable $\kappa$}
\label{subsec:trees_without_kappa-EPP}

In an attempt to strengthen Theorem~\ref{thm:counterexample_kappa-EPP}, we ask:

\begin{problem}\label{prob:tree_without_kappa-EPP}
Is there, for every uncountable cardinal $\kappa$, a tree that does not have the $\kappa$-EPP?
\end{problem}

Regarding Problem \ref{prob:tree_without_kappa-EPP}, we are only able to prove a consistency result, which is based on the assumption that there are no weak limit cardinals:

\begin{restatable}{thm}{counterexamplekappaEPPtree}\label{thm:counterexample_kappa-EPP_tree}
It is consistent with ZFC that for every uncountable cardinal $\kappa$ there is a tree that does not have the $\kappa$-EPP.
\end{restatable}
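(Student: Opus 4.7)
The plan is to work under the consistent set-theoretic hypothesis ruling out uncountable weak limit cardinals, i.e.\ in a model of ZFC where every uncountable cardinal is either a successor or a singular limit (such a model exists, for example $L$ if one reads ``weak limit'' as ``weakly inaccessible''). Given this, it suffices to treat the successor case directly and handle singular cardinals by combining the successor constructions along a cofinal sequence.

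For the successor case $\kappa=\lambda^+$, I would fix an almost disjoint family $\{A_\alpha : \alpha<\lambda^+\}$ of subsets of $\lambda$, each of cardinality $\lambda$ and with pairwise intersections of size strictly less than $\lambda$; such families exist in ZFC for every infinite $\lambda$. The host $\Gamma$ is built by taking a single ``skeleton'' tree (say an infinite path) and attaching, for each $\alpha<\lambda^+$, a ``fan'' $F_\alpha$ whose leaf set is encoded by $A_\alpha\subseteq\lambda$. The tree $T$ is one of these fans $F_0$. Then, on the one hand, any $\lambda^+$ pairwise disjoint copies of $T$ in $\Gamma$ would yield $\lambda^+$ pairwise disjoint $\lambda$-subsets of $\lambda$, contradicting $|\lambda|=\lambda$; on the other hand, any $X\subseteq V(\Gamma)$ of size at most $\lambda$ can intersect only $\leq\lambda$ of the $A_\alpha$ in a set of size $\lambda$ (using almost-disjointness together with cardinality arithmetic), so $\lambda^+$ of the fans survive intact in $\Gamma-X$, witnessing the presence of a copy of $T$.

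For the singular case, let $\mu=\cf(\kappa)<\kappa$ and fix a strictly increasing cofinal sequence $(\kappa_i)_{i<\mu}$ of successor cardinals below $\kappa$. The successor case yields, for each $i$, a tree $T_i$ and a host $\Gamma_i$ witnessing the failure of the $\kappa_i$-EPP. I would glue the $T_i$ into a single tree $T$ (for example by identifying a common root, or by amalgamating along a suitably chosen path) and glue the $\Gamma_i$ analogously into $\Gamma$. Each copy of $T$ in $\Gamma$ is forced to contain a copy of each $T_i$ in the corresponding $\Gamma_i$-component, so both the bound on pairwise disjoint copies and the resistance to small deletions propagate from the $\kappa_i$-level to the global structure.

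The main obstacle I anticipate is enforcing enough rigidity of the template fan $F_0$ so that copies of $T$ in $\Gamma$ genuinely align with the almost disjoint family: a naive choice such as a star $K_{1,\lambda}$ is far too flexible (any $\lambda$-many neighbours of any vertex form a copy, mixing $A_\alpha$'s freely) and would break the cardinality argument. I therefore expect $F_0$ to need asymmetric structure---for instance a caterpillar whose spine codes ordinals, or a spider whose ray lengths encode a scale---so that any embedding of $F_0$ into $\Gamma$ is effectively determined by its image on a distinguished backbone, and hence falls within some fixed fan $F_\alpha$. Pushing this rigidity through the gluing step in the singular case, while keeping the tree property and the control over copies, will be the most delicate part of the argument.
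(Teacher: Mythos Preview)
Your almost-disjoint-family construction for the successor case $\kappa=\lambda^+$ has a genuine gap, and it is not the rigidity issue you flag. The argument that $\Gamma$ has no $\lambda^+$ disjoint copies of $T$ rests on every copy of $T$ drawing $\lambda$ vertices from a fixed shared leaf pool of size $\lambda$ (this is what produces ``$\lambda^+$ pairwise disjoint $\lambda$-subsets of $\lambda$''). But then take $X$ to be that entire pool: $|X|=\lambda<\lambda^+=\kappa$, yet $\Gamma-X$ contains no copy of $T$, since every copy needs $\lambda$ pool vertices. Your claim that a set of size at most $\lambda$ ``can intersect only $\le\lambda$ of the $A_\alpha$ in a set of size $\lambda$'' is simply false for this $X$: it meets \emph{every} $A_\alpha$ in all of $A_\alpha$. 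There is a structural tension here that rigidity alone does not resolve: any shared pool small enough to force your pigeonhole contradiction is itself deletable by a set of size $<\kappa$, and adding private structure to each fan does not help --- if a fan's private part can host $T$ on its own then the fans are already $\lambda^+$ disjoint copies, and if it cannot then deleting the pool again kills every copy.

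The paper sidesteps this by taking $|T|=\kappa$ rather than $|T|<\kappa$. It first shows (Theorem~\ref{thm:not_proper_self-subtree}, under the same no-weakly-inaccessible hypothesis, via an inductive construction of large subgraph-antichains of trees) that for every uncountable $\kappa$ there is a tree of size $\kappa$ that is not a proper subgraph of itself. Then a general lemma (Theorem~\ref{thm:graph_without_kappa-EPP}) converts any such $T$ into a witness: glue $\kappa$ copies of $T^{\mathsf V}$ so that any two share exactly one leaf and every vertex lies in at most two copies. Rigidity forces every embedded $T^{\mathsf V}$ in the host to coincide with a planted copy, so no two are disjoint; and deleting fewer than $\kappa$ vertices touches fewer than $\kappa$ copies, leaving one intact. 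No separate singular-case gluing is needed at this level --- the successor/singular split appears only inside the antichain construction.
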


For the proof we need the following theorem, which is based on the same set-theoretic assumption as above:

\begin{restatable}{thm}{notproperselfsubtree}\label{thm:not_proper_self-subtree}
It is consistent with ZFC that for every uncountable cardinal $\kappa$ there is a tree of size $\kappa$ which is not a proper subgraph of itself.
\end{restatable}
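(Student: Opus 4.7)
The plan is to build, by transfinite recursion on the uncountable cardinal $\kappa$, a tree $T_\kappa$ of size $\kappa$ with no proper self-embedding, working inside a model of ZFC in which every uncountable cardinal is a successor. Such a model exists, and this set-theoretic hypothesis is the only non-ZFC input the argument needs; the added difficulty over Theorem~\ref{thm:not_proper_selfminor} reflects the fact that trees carry far less structure than arbitrary graphs, so there is less room to introduce rigidity.

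For the recursive step write $\kappa = \lambda^+$ and assume that trees $T_\mu$ with the desired property have been built for all uncountable $\mu < \kappa$. I would define $T_\kappa$ by taking a fresh vertex $r$ and attaching to it, through $\kappa$ disjoint edges, the roots of a family $(S_\alpha)_{\alpha<\kappa}$ of pairwise disjoint rooted trees of cardinality $\le \lambda$, chosen so that
\begin{enumerate}[label=\upshape(\roman*)]
\item every vertex of $T_\kappa$ other than $r$ has degree $<\kappa$;
\item each $S_\alpha$ is rigid, i.e.\ the only rooted subgraph embedding $S_\alpha\hookrightarrow S_\alpha$ is the identity;
\item for $\alpha\ne\beta$ there is no rooted subgraph embedding $S_\alpha\hookrightarrow S_\beta$.
\end{enumerate}
Each $S_\alpha$ is to be produced from the previously constructed $T_\mu$'s, decorated by a small combinatorial tag that encodes the ordinal $\alpha$; the rigidity of the underlying $T_\mu$ prevents any embedding from moving the tag, which is how (ii) and (iii) are to be verified.

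Given such a family, the rigidity of $T_\kappa$ is a short argument. Let $\varphi:T_\kappa\hookrightarrow T_\kappa$ be a subgraph embedding. Condition~(i) makes $r$ the unique vertex of degree~$\kappa$, so $\varphi(r)=r$. Then $\varphi$ sends each neighbour $c_\alpha$ of $r$ to some neighbour $c_{\sigma(\alpha)}$; since the connected subgraph $S_\alpha$ is mapped into $T_\kappa-\{r\}$ and its image meets $c_{\sigma(\alpha)}$, this image is contained in $S_{\sigma(\alpha)}$, giving a rooted embedding $S_\alpha\hookrightarrow S_{\sigma(\alpha)}$. By~(iii) we obtain $\sigma = \mathrm{id}$, and then by~(ii) each $\varphi|_{S_\alpha}$ is the identity. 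Hence $\varphi$ is the identity on $T_\kappa$, so $T_\kappa$ has no proper subgraph isomorphic to itself.

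The main obstacle is the construction of the antichain $(S_\alpha)_{\alpha<\kappa}$ of pairwise incomparable rigid rooted trees of size $\le\lambda$; this is precisely where both the inductive hypothesis and the absence of weak limit cardinals are used. In the base step $\kappa=\aleph_1$ all $S_\alpha$ must be countable, and I would produce the antichain by a direct combinatorial construction, for instance by encoding distinct countable ordinals into the branching pattern of an otherwise rigid countable tree. For the inductive step $\lambda$ is already large enough to accommodate $\lambda^+$ distinctly tagged copies of $T_\lambda$, and the rigidity of $T_\lambda$ is exactly what forces any rooted embedding between two such copies to preserve the tag. Verifying~(iii) carefully, so that no unintended embedding arises through the underlying tree structure, is the technical heart of the proof.
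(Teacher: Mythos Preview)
Your overall set-up—work in a model with no weak limit cardinals and recurse through the successor cardinals—is the same framework the paper uses. But the construction itself and its verification diverge from the paper's, and the divergence is precisely where your argument has a real gap.

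Your star-shaped tree $T_\kappa$ is rigid \emph{provided} the branches $(S_\alpha)_{\alpha<\kappa}$ form a $\kappa$-sized antichain of \emph{rigid} rooted trees of size $\le\lambda$. You acknowledge that producing this family is ``the technical heart'', but you do not actually produce it, and the hint you give is circular. The inductive hypothesis hands you a single rigid tree $T_\lambda$ (and the $T_\mu$ for the $\le\lambda$ many uncountable $\mu<\lambda$); to get $\lambda^+$ pairwise incomparable tagged copies you need $\lambda^+$ tags that themselves form a subgraph-antichain of trees of size $\le\lambda$—exactly the antichain problem you are trying to sidestep. Moreover, even if one imports the paper's antichain result (Theorem~\ref{thm:antichain_of_trees}), that antichain comes with no rigidity, so condition~(ii) still fails, and in a star this is fatal: knowing only that each $S_\alpha$ maps into itself does not make $\varphi$ surjective.

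The paper avoids this trap by gluing differently. It takes the $\kappa$-sized antichain $\cU$ of $\kappa$-sized trees from Theorem~\ref{thm:antichain_of_trees}, replaces each $U\in\cU$ by $U^{\mathsf V}$ (so no two adjacent vertices have degree $\le 2$), and glues the family along a $\kappa$-regular tree by joining leaves with new edges. The antichain forces any self-embedding $h$ to send each block into itself, hence the image of $h$ \emph{meets} every block; and because every vertex of the glued tree separates two blocks, the image must be all of $U^*$. This surjectivity argument uses only the antichain property and the separation structure of the $\kappa$-regular gluing—no rigidity of the individual pieces is needed. That is the idea your star construction cannot reproduce, and it is what lets the paper close the argument where yours stalls.
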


It remains open whether this also holds in ZFC:

\begin{problem}\label{prob:not_proper_subtree}
Is there, for every uncountable cardinal $\kappa$, a tree of size $\kappa$ which is not a proper subgraph of itself?
\end{problem}

In Section \ref{sec:trees_without_kappa-EPP} we will see that a positive answer to the following problem would imply a positive answer to the latter:

\begin{problem}\label{prob:subtree_antichain}
Is there, for every uncountable cardinal $\kappa$, a $\kappa$-sized subgraph-antichain of trees of size at most $\kappa$?
\end{problem}

\begin{problem}\label{prob:same_for_rayless_trees}
Do the answers to Problems \ref{prob:tree_without_kappa-EPP}, \ref{prob:not_proper_subtree} or \ref{prob:subtree_antichain} change if we additionally require the trees to be rayless?
\end{problem}

\subsection{The ($\kappa$-)EPP for classes defined by topological minors and minors}

For a graph $G$, write $\cT(G)$ for the class of graphs containing $G$ as a topological minor and $\cM(G)$ for the class of graphs containing $G$ as a minor.
Classes of this form are typically considered in results on the EPP for finite graphs.
In Section \ref{sec:top_minors} (Theorem \ref{thm:main_thm_for_top_minors}), we formulate and prove a version of Theorems \ref{thm:intro_EPP}, \ref{thm:intro_aleph_0-EPP}, and \ref{thm:intro_kappa-EPP} for classes of the form $\cT(G)$.
Note that similar results on the EPP or $\aleph_0$-EPP are not possible for classes of the form $\cM(G)$, since not even $\cM(K_{1, \aleph_0})$ has the EPP or the $\aleph_0$-EPP (Proposition \ref{prop:star_models_do_not_have_EPP}).

Nash-Williams \cite{nashwilliams1965well} showed that the class of all trees is well-quasi-ordered by the topological minor relation and Laver \cite{laver1978better} proved a more general labelled version of this result.
With help of Laver's theorem, we deduce the following result:

\begin{restatable}{cor}{TThasEPP}\label{cor:TT_has_EPP}
$\cT(T)$ has the $\kappa$-EPP for every uncountable cardinal $\kappa$ and every tree $T$. If $T$ is rayless, then $\cT(T)$ also has the $\aleph_0$-EPP.
\end{restatable}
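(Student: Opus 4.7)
The corollary is deduced from Theorem \ref{thm:main_thm_for_top_minors}, the topological-minor analogue of the main theorems. That result requires two hypotheses on the graph $G$ from which $\cT(G)$ is built: an lwqo condition on a suitable class of graphs associated with $G$ (now under the topological-minor quasi-order rather than subgraph embedding), and a structural condition (rayless in the EPP/$\aleph_0$-EPP statements, or admitting a tree-decomposition into finite parts in the $\kappa$-EPP statement with $\kappa$ uncountable).

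The plan is to verify both hypotheses when $G = T$ is a tree. For the lwqo condition, observe that every subgraph and every topological minor of a tree is a forest whose components are trees; so the relevant class is contained in the class of all trees. Laver's theorem \cite{laver1978better} states precisely that the class of all trees is labelled well-quasi-ordered under label-preserving topological minor embeddings, for any finite label set, and lwqo is inherited by subclasses under the same quasi-order. Hence the lwqo hypothesis of Theorem \ref{thm:main_thm_for_top_minors} is satisfied for any tree $T$.

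For the structural hypothesis, any tree admits a trivial tree-decomposition into parts of size at most two: after rooting $T$ at an arbitrary vertex, assign to each non-root vertex $v$ the bag $\{v, \text{parent}(v)\}$, with the root contributing the singleton bag. Theorem \ref{thm:main_thm_for_top_minors} therefore yields the $\kappa$-EPP for every uncountable $\kappa$. If furthermore $T$ is rayless, the rayless hypothesis is immediate, and the same theorem also delivers the $\aleph_0$-EPP.

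The main obstacle is really only to align notation: confirming that the lwqo condition required by Theorem \ref{thm:main_thm_for_top_minors} matches the form supplied by Laver's theorem (the right quasi-order on the correct associated class, and the wqo being available uniformly over all finite label sets). Once this bookkeeping is done, the corollary follows immediately without any additional argument.
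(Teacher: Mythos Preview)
Your proposal is correct and follows exactly the paper's approach: the paper's proof is the one-line ``Combine Theorems \ref{thm:main_thm_for_top_minors} and \ref{thm:trees_are_lwqo}'', and you have simply spelled out the verification of the two hypotheses (lwqo via Laver, and the structural condition via the obvious tree-decomposition into bags of size at most two). One small imprecision worth tightening: induced subgraphs of a tree are forests, not trees, so ``contained in the class of all trees'' is not literally true; both your argument and the paper's tacitly rely on the fact that Laver's better-quasi-ordering result extends from trees to forests.
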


A corresponding statement for the EPP does not hold, since Thomassen \cite{thomassen1988on} showed that there are finite trees $T$ for which $\cT(T)$ does not have the EPP.

An argument similar to Proposition \ref{prop:ray_does_not_have_EPP} shows that $\cT(R)$ and $\cM(R)$, where $R$ is a ray, do not have the EPP or the $\aleph_0$-EPP. However, we know of no such counterexamples for the $\kappa$-EPP for uncountable $\kappa$.

\begin{problem}
Is there an uncountable cardinal $\kappa$ and a graph $G$ such that $\cT(G)$ or $\cM(G)$ does not have the $\kappa$-EPP? Does such a graph exist for every uncountable cardinal $\kappa$?
\end{problem}

\subsection{Comparing and evaluating the results}

Considering Theorems \ref{thm:intro_EPP} and \ref{thm:intro_aleph_0-EPP}, Proposition~\ref{prop:finitely_many_rays}, and Corollary \ref{cor:TT_has_EPP}, the $\aleph_0$-EPP and the EPP seem to behave very similarly.
The $\kappa$-EPP for uncountable $\kappa$ behaves differently.
In fact, it is easy to see that any graph of size less than $\kappa$ has the $\kappa$-EPP (Proposition \ref{small_graphs_have_kappa-EPP}).
So in particular, the ray has the $\kappa$-EPP for all uncountable $\kappa$, but not the EPP or the $\aleph_0$-EPP (Proposition \ref{prop:ray_does_not_have_EPP}).
However, if we restrict ourselves to graphs of size at least $\kappa$, we do not know whether such examples exist:

\begin{problem}
Let $\mu \leq \kappa$ be infinite cardinals and let $G$ be a graph of size at least $\kappa$.
Does $G$ have the $\kappa$-EPP if and only if $G$ has the $\mu$-EPP if and only if $G$ has the EPP?
\end{problem}

Viewed from a distance, the question whether a graph $G$ has the ($\kappa$-)EPP seems to be related to the self-similarity of $G$:
in the proofs of Theorems \ref{thm:intro_EPP}, \ref{thm:intro_aleph_0-EPP}, and \ref{thm:intro_kappa-EPP} the lwqo property is used to find various proper subgraph embeddings of $G$ into itself.
On the other hand, the graphs without the $\kappa$-EPP, which we construct in the proof of Theorem \ref{thm:counterexample_kappa-EPP}, do not admit any proper subgraph embeddings into themselves.

\subsection{Ubiquity}
The results of this paper have applications in the field of ubiquity.
Call a class $\cG$ of graphs ubiquitous if every graph $\Gamma$ that contains $n$ disjoint copies of graphs from $\cG$ as subgraphs for all $n \in \NN$ also contains infinitely many such copies.
It is easy to see that if $\cG$ has the $\aleph_0$-EPP, then $\cG$ is ubiquitous (Proposition \ref{prop:ubiquity}).
However, the converse is not true, since the ray is ubiquitous by a result of Halin \cite{halin1965uber}.

Thus, all graphs as in Theorem \ref{thm:intro_aleph_0-EPP} and Corollary \ref{cor:excluded_path_kappa-EPP} are ubiquitous, which was not known before.
Furthermore, by Corollary \ref{cor:TT_has_EPP} the class $\cT(T)$ is ubiquitous for any rayless tree $T$.
With that, we reobtain a special case of the result of Bowler et.\ al.\ \cite{BEEGHPT22} that $\cT(T)$ is ubiquitous for every tree $T$.

\section{Preliminaries}

All graph-theoretic notations not defined here can be found in Diestel's book \cite{diestel2015book} and all set-theoretic notations in Jech's book \cite{jech2003book}.

\subsection{Minors}
A graph $H$ is a \emph{minor} of a graph $G$ if there is a family $\{ V_h : h \in V(H) \}$ of connected, pairwise disjoint, non-empty subsets of $V(G)$ such that there is a $V_h$--$V_{h'}$ edge in $G$ whenever there is a $h$--$h'$ edge in $H$. The sets $V_h$ for $h \in V(H)$ are called \emph{branch sets}.

\subsection{Tree order}
All trees in this paper have a fixed root vertex, which we will usually not explicitly specify.
(In some parts of the paper, root vertices of trees are not needed and can be ignored.)
Given a tree $T$ with root $r$, we define a partial order on $V(T)$ by $x \leq y$ if $x$ lies on the unique $r$--$y$ path in $T$.
We denote by $\br_T(x)$ the subtree of $T$ with vertex set $\{ y \in V(T) : x \leq y \}$.
For $x \in V(T)$, we write $\successor_T(x)$ for the set of immediate successors of $x$ in $V(T)$.
Furthermore, we call a subtree of $T$ \emph{rooted} if it contains $r$.

\subsection{Tree-decompositions}

A \emph{tree-decomposition} of a graph $G$ is a pair $(T, \cV)$, where $T$ is a tree and $\cV = (V_t : t \in V(T))$ is a family of subsets of $V(G)$ such that:

\begin{itemize}
\item $V(G) = \bigcup \{ V_t : t \in V(T) \}$,
\item for every edge $vw$ of $G$ there is a $t \in V(T)$ such that $v \in V_t$ and $w \in V_t$, and
\item if $t_1, t_2, t_3 \in V(T)$ such that $t_2$ lies on the unique $t_1$--$t_3$ path in $T$, then $V_{t_1} \cap V_{t_3} \subseteq V_{t_2}$.
\end{itemize}
The sets $V_t$ for $t \in V(T)$ are called \emph{parts} of the tree-decomposition.

Halin \cite{halin1966graphen} showed:

\begin{lemma}\label{lem:rayless_tree-decomposition}
Every rayless graph admits a tree-decomposition $(T, \cV)$ into finite parts such that $T$ is rayless.
\end{lemma}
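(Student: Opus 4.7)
The plan is to proceed by transfinite induction on Schmidt's rayless rank, which for a rayless graph $G$ is the least ordinal $\alpha$ such that either $\alpha = 0$ and $G$ is finite, or $\alpha > 0$ and some finite $S \subseteq V(G)$ has the property that every component of $G - S$ has rank strictly less than $\alpha$. Schmidt's theorem, which I would cite rather than reprove, guarantees that every rayless graph admits such an ordinal rank.

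The rank-$0$ case is immediate: let $T$ be a single-vertex tree whose unique bag equals $V(G)$. For the inductive step, suppose the lemma holds at all ranks $< \alpha$, let $G$ be a rayless graph of rank $\alpha > 0$, and fix a finite $S \subseteq V(G)$ witnessing the rank bound. By the inductive hypothesis, each component $C$ of $G - S$ admits a tree-decomposition $(T_C, (V_t^C)_{t \in V(T_C)})$ into finite parts with $T_C$ rayless. Build $T$ by taking a new root $r$ and attaching every $T_C$ by an edge from $r$ to the root of $T_C$, and define the parts by $V_r := S$ and $V_t := V_t^C \cup S$ for each $t \in V(T_C)$. Every bag is then finite, and $T$ is rayless because any ray in $T$ would, after at most one edge, be contained entirely in a single $T_C$, contradicting raylessness of $T_C$.

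The tree-decomposition axioms are routine to verify: vertex coverage holds because $V_r = S$ and the bags inside each $T_C$ already cover $V(C)$; edge coverage holds because edges inside $S$ lie in $V_r$, edges inside a component $C$ lie in an original bag of $T_C$, and every $S$–$C$ edge now lies in any bag of $T_C$ containing its $C$-endpoint, thanks to $S$ having been added everywhere; and the intersection axiom holds because each $v \in S$ appears in every bag of $T$ (so its appearance set is the whole of $T$, trivially connected), while every $v$ in a component $C$ has appearance set equal to its appearance set in $(T_C, \cV_C)$, a connected subtree of $T_C \subseteq T$.

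The main obstacle, and the only non-routine part of the argument, lies upstream of this induction: it is Schmidt's theorem itself, i.e.\ the statement that every rayless graph really does admit such a well-defined ordinal rank. This requires a delicate transfinite argument which I would invoke as a black box; once it is available, the recursive construction above is essentially forced on us.
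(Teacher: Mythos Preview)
Your argument is correct. The induction on Schmidt's rank is the natural modern route to this lemma, and all three tree-decomposition axioms, finiteness of the parts, and raylessness of $T$ go through exactly as you describe. Your decision to invoke Schmidt's theorem as a black box is appropriate: that is indeed where the real content lies, and once it is available the construction is forced.

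As for comparison with the paper: the paper does not actually prove this lemma. It simply attributes the result to Halin \cite{halin1966graphen} and moves on. Halin's 1966 paper predates Schmidt's rank (1983), so his original argument cannot have been phrased this way, but since the paper does not reproduce it there is nothing concrete to compare against. Your write-up therefore supplies strictly more than the paper does at this point.
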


\subsection{Compactness}\label{subsec:compactness}

Following Diestel's book \cite{diestel2015book}*{Appendix A}, we describe the Compactness Principle, a combinatorial framework for compactness proofs.

Let $V$ be an arbitrary set, let $\cF$ be a set of finite subsets of $V$, and let $S$ be any finite set.
Suppose for each $U \in \cF$ we have fixed a set of \emph{admissible} functions $U \to S$.
We call a subset $\cU$ of $\cF$ \emph{compatible} if there is a function $g: V \to \cF$ such that $g \upharpoonright U$ is admissible for all $U \in \cU$.

\begin{thm}[Compactness Principle]\label{thm:compactness_principle}
If all finite subsets of $\cF$ are compatible, then $\cF$ is compatible.
\end{thm}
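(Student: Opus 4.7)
The plan is to recast the statement topologically and apply Tychonoff's theorem to $S^V$. Equip the finite set $S$ with the discrete topology; since $S$ is then compact, Tychonoff's theorem makes the product $S^V$ compact in the product topology. (I read $g\colon V \to S$ in the definition of compatibility, which is the only way admissibility of $g\upharpoonright U$ makes sense.) For each $U \in \cF$, set
\[
C_U \;=\; \{\, g \in S^V : g\upharpoonright U \text{ is admissible} \,\}.
\]
Since $U$ is finite there are only finitely many functions $U \to S$, and for each admissible $f\colon U \to S$ the cylinder $\{\, g \in S^V : g\upharpoonright U = f \,\}$ is a basic clopen subset of $S^V$. Hence each $C_U$ is a finite union of clopen sets and is in particular closed.

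Next I would verify that the family $\{\, C_U : U \in \cF \,\}$ has the finite intersection property. Given a finite $\cU \subseteq \cF$, the hypothesis of the theorem says precisely that $\cU$ is compatible, i.e.\ there is some $g\colon V \to S$ with $g\upharpoonright U$ admissible for every $U \in \cU$; such a $g$ lies in $\bigcap_{U \in \cU} C_U$. By compactness of $S^V$ the full intersection $\bigcap_{U \in \cF} C_U$ is then non-empty, and any element of it is a function $g$ witnessing that $\cF$ itself is compatible.

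There is no substantive obstacle in this approach; the only care needed is the book-keeping that translates admissibility and compatibility into the topological statement that the $C_U$ form a closed family with the finite intersection property, after which Tychonoff's theorem closes the argument. In fact one only uses compactness of a product of finite discrete spaces, which is equivalent to the Boolean prime ideal theorem and strictly weaker than the axiom of choice; and when $V$ is countable, an elementary proof via K\H{o}nig's infinity lemma, applied to the tree of finite partial compatible assignments on an exhaustion $U_1 \subseteq U_2 \subseteq \cdots$ of $\bigcup \cF$, also suffices.
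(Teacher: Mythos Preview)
Your argument via Tychonoff's theorem is correct and entirely standard; you also correctly spotted the typo in the paper's setup (the codomain of $g$ should be $S$, not $\cF$). Note, however, that the paper does not actually supply its own proof of this statement: it is quoted as the Compactness Principle from Appendix~A of Diestel's book, so there is nothing to compare against beyond observing that your Tychonoff argument is one of the canonical proofs (the other being the inverse-limit / K\H{o}nig-style argument you allude to at the end).
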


\section{Propositions}\label{sec:propositions}

We say that a class $\cG$ of finite graphs has the \emph{EPP for finite host graphs} if it has the EPP as defined in the beginning of Section \ref{sec:introduction} with the additional requirement that the host graph $\Gamma$ is finite.

\begin{prop}\label{prop:finite_EPP}
Any class $\cG$ of finite graphs has the EPP for finite host graphs if and only if it has the EPP.
\end{prop}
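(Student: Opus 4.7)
The plan is to observe that the ``if'' direction is immediate, since EPP for finite host graphs is just EPP with the host graph restricted to be finite, and to derive the harder ``only if'' direction from the Compactness Principle (Theorem~\ref{thm:compactness_principle}). Assume $\cG$ has the EPP for finite host graphs witnessed by some $f: \NN \to \NN$. Given an arbitrary graph $\Gamma$ and $k \in \NN$ with $\Gamma$ containing no $k$ disjoint copies of graphs from $\cG$, the goal is to produce a single hitting set $X \subseteq V(\Gamma)$ with $|X| \leq f(k)$.

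For the compactness setup I would take $V := V(\Gamma)$, $\cF$ the family of all finite subsets of $V$, and the finite symbol set $S := \{0, 1, \ldots, f(k)\}$. A function $g: U \to S$ is declared \emph{admissible} if each of the labels $1, \ldots, f(k)$ is taken at most once on $U$ and $\Gamma[U] - g^{-1}(\{1, \ldots, f(k)\})$ contains no copy of a graph from $\cG$. The point of using $f(k)+1$ labels rather than just the two values ``in $X$'' and ``not in $X$'' is that the global size bound $|X| \leq f(k)$ will emerge from the purely local ``each label used at most once'' condition, which the Compactness Principle does preserve; a plain indicator function would transport the hitting-set property but not the size bound. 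A routine check shows that admissibility is downward closed: if $g$ is admissible on $U^*$, then $g \upharpoonright U$ is admissible for every $U \subseteq U^*$.

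To verify the hypothesis of the Compactness Principle, I would, for each finite $U^* \subseteq V$, apply the EPP for finite host graphs to $\Gamma[U^*]$. Since $\Gamma$ contains no $k$ disjoint copies, neither does $\Gamma[U^*]$, so there is a hitting set of size at most $f(k)$ for $\Gamma[U^*]$; labelling its vertices injectively into $\{1, \ldots, f(k)\}$ and all other vertices of $U^*$ by $0$ yields an admissible function on $U^*$, which by downward closedness handles arbitrary finite subfamilies of $\cF$. The Compactness Principle then produces $g: V \to S$ whose restriction to every finite $U \subseteq V$ is admissible. Setting $X := g^{-1}(\{1, \ldots, f(k)\})$, the ``used at most once'' condition applied to arbitrary finite $U$ forces $|X| \leq f(k)$, and any putative copy of a graph from $\cG$ in $\Gamma - X$ would have a finite vertex set $U_0$ and thus contradict admissibility of $g \upharpoonright U_0$. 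The main obstacle is precisely this choice of encoding; once the labelling is in place, the rest is mechanical.
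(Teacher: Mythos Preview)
Your proof is correct and follows the same overall compactness strategy as the paper. The one difference is in the encoding: the paper uses the plain indicator function with $S = \{0,1\}$, declaring $g: U \to \{0,1\}$ admissible when $|g^{-1}\{1\}| \leq f(k)$ and $\Gamma[U] - g^{-1}\{1\}$ contains no copy of a graph from $\cG$. Your assertion that this simpler encoding ``would transport the hitting-set property but not the size bound'' is mistaken: if the global function $h$ satisfied $|h^{-1}\{1\}| > f(k)$, then any finite $U \subseteq h^{-1}\{1\}$ of size $f(k)+1$ lies in $\cF$, and $h \upharpoonright U$ would violate the size clause of admissibility. Since $\cF$ consists of \emph{all} finite subsets of $V(\Gamma)$, the bound $|X| \leq f(k)$ is in fact a local condition and transfers automatically. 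Your injective-labelling trick with $S = \{0,1,\dots,f(k)\}$ is a valid alternative that makes this step more visibly automatic, but it is not necessary.
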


\begin{proof}
If $\cG$ has the EPP, then it clearly has the EPP for finite host graphs.
Conversely, let $f: \NN \to \NN$ witness that $\cG$ has the EPP for finite host graphs.
We show that the same function witnesses that $\cG$ has the EPP.
Let $k \in \NN$ and let $\Gamma$ be any infinite graph that does not contains $k$ disjoint copies of graphs from $\cG$.
We have to find a set $X \subseteq V(\Gamma)$ of size at most $f(k)$ such that $\Gamma - X$ does not contain any copy of a graph from $\cG$.
This is achieved by a standard compactness argument, for which we use the Compactness Principle (see Section \ref{subsec:compactness}). We provide the details in the following.

Let $\cF$ be the set of all finite subsets of $V(\Gamma)$ and let $U \in \cF$.
We call a function $g: U \to \{0, 1\}$ admissible if $X' := g^{-1}\{1\}$ has size at most $f(k)$ and $\Gamma[U] - X'$ does not contain any copy of a graph from $\cG$.
We show that every finite $\cU \subseteq \cF$ is compatible.
Indeed, since $U^* := \bigcup \cU \subseteq V(\Gamma)$ is finite and since $\cG$ has the EPP for finite host graphs, there is a set $X^* \subseteq U^*$ of size at most $f(k)$ such that $\Gamma[U^*] - X^*$ does not contain any copy of a graph from $\cG$.
Then the characteristic function $g^* : V(\Gamma) \to \{ 0, 1 \}$ of $X^*$ witnesses that $\cU$ is compatible.
By the Compactness Principle (Theorem \ref{thm:compactness_principle}), also $\cF$ is compatible; let $h: V(\Gamma) \to \{ 0, 1 \}$ witness compatibility of $\cF$.
It is straightforward to check that $X := h^{-1}\{1\}$ has size at most $f(k)$ and that $\Gamma - X$ does not contain any copy of a member of $\cG$.
\end{proof}

\begin{prop}\label{prop:finite_graphs_have_EPP}
Every finite graph $G$ has the EPP.
\end{prop}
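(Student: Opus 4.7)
The plan is to exhibit the function $f$ explicitly and argue directly, without needing the compactness reduction from Proposition~\ref{prop:finite_EPP}. Let $n := |V(G)|$ and set $f(k) := (k-1)n$. Given any host graph $\Gamma$ and any $k \in \NN$, I would consider the collection of all families of pairwise disjoint copies of $G$ in $\Gamma$, partially ordered by inclusion. If $\Gamma$ contains $k$ pairwise disjoint copies of $G$, the first alternative in Definition~\ref{def:EPP} is satisfied and we are done. Otherwise, the number of members of any family of pairwise disjoint copies is bounded above by $k-1$, so there is a \emph{maximum} such family $G_1, \ldots, G_m$ with $m \leq k-1$.

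Now set $X := V(G_1) \cup \cdots \cup V(G_m)$. Then $|X| \leq m \cdot n \leq (k-1)n = f(k)$. Moreover, $\Gamma - X$ contains no copy of $G$: any such copy would be disjoint from $G_1, \ldots, G_m$ and could therefore be added to the family, contradicting maximality. Hence the second alternative in Definition~\ref{def:EPP} is satisfied.

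There is essentially no obstacle here; the only point to notice is that a maximum family exists in $\Gamma$ (even for infinite $\Gamma$) precisely because the size is a priori bounded by $k-1$ under the assumed absence of $k$ disjoint copies. Alternatively, one could first invoke Proposition~\ref{prop:finite_EPP} to reduce to finite host graphs and then run the same maximum-packing argument inside $\Gamma$, but this detour is unnecessary.
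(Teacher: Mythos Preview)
Your argument is correct and is essentially the same as the paper's: both use the function $f(k)=(k-1)|G|$ and the greedy packing idea, the paper building copies one at a time until $k$ are found, you taking a maximal family and observing its vertex set is a hitting set. One terminological quibble: you write ``maximum'' family but then argue by \emph{maximality} (non-extendability); either notion works here since the sizes are bounded by $k-1$, but maximality is what your contradiction actually uses.
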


\begin{proof}
Let $f : \NN \to \NN, k \mapsto |G| \cdot (k - 1)$.
Let $k \in \NN$ and consider any graph $\Gamma$ such that $\Gamma - X$ contains a copy of $G$ for all $X \subseteq V(\Gamma)$ of size at most $f(k)$.
We recursively find $k$ disjoint copies of $G$ in $\Gamma$.
Having found disjoint copies $G_1, \dots, G_i$ for $i < k$, we find another disjoint copy by choosing $X := V(G_1) \cup \dots \cup V(G_i)$, which is a set of size at most $f(k)$.
\end{proof}

\begin{prop}\label{small_graphs_have_kappa-EPP}
For every infinite cardinal $\kappa$, every graph $G$ of size less than $\kappa$ has the EPP.
\end{prop}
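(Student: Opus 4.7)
The plan is to mimic, at transfinite scale, the greedy argument from the proof of Proposition \ref{prop:finite_graphs_have_EPP}. Let $\kappa$ be an infinite cardinal and let $G$ be a graph with $|G| < \kappa$. Fix a host graph $\Gamma$ and assume, for contradiction with failure of the $\kappa$-EPP, that no set $X \subseteq V(\Gamma)$ of size less than $\kappa$ satisfies that $\Gamma - X$ is free of copies of $G$. Under this assumption I will recursively construct $\kappa$ pairwise disjoint copies of $G$ in $\Gamma$, which verifies the $\kappa$-EPP as required.

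The construction proceeds by transfinite recursion on $\beta < \kappa$. Suppose that pairwise disjoint copies $G_\alpha \subseteq \Gamma$ of $G$ have already been chosen for all $\alpha < \beta$, and set $X_\beta := \bigcup_{\alpha < \beta} V(G_\alpha)$. By the standing assumption on $\Gamma$, once we know $|X_\beta| < \kappa$ we can find a copy of $G$ in $\Gamma - X_\beta$ and take it as $G_\beta$; by construction it is disjoint from all previously chosen $G_\alpha$. Iterating through all $\beta < \kappa$ yields the desired family $\{G_\alpha : \alpha < \kappa\}$ of $\kappa$ pairwise disjoint copies of $G$ in $\Gamma$.

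The only thing to verify along the way is the cardinal bound $|X_\beta| < \kappa$ at each stage. This is immediate: we have $|X_\beta| \leq |\beta| \cdot |G|$, and since both $|\beta| < \kappa$ and $|G| < \kappa$ with $\kappa$ infinite, their product is at most $\max(|\beta|,|G|,\aleph_0) < \kappa$. Hence the recursion is never obstructed, and this proposition presents no real difficulty beyond this piece of routine cardinal arithmetic.
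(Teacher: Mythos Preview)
Your proof is correct and follows essentially the same greedy transfinite recursion as the paper's proof: assume no hitting set of size less than $\kappa$ exists, and at stage $\beta$ use that $\bigcup_{\alpha<\beta} V(G_\alpha)$ has size less than $\kappa$ to find the next copy. The paper is slightly terser (it simply asserts the union has size less than $\kappa$ without spelling out the product bound), and it phrases the argument directly rather than ``for contradiction'', but the content is identical.
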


\begin{proof}
Let $\Gamma$ be any graph such that $\Gamma - X$ contains a copy of $G$ for all $X \subseteq V(\Gamma)$ of size less than $\kappa$.
We recursively construct a family $(G_\alpha)_{\alpha < \kappa}$ of pairwise disjoint copies of $G$ in $\Gamma$.
If $\alpha < \kappa$ and we have defined $G_\beta$ for all $\beta < \alpha$, then we find $G_\alpha$ by choosing $X := \bigcup \{ V(G_\beta) : \beta < \alpha \}$, which is a set of size less than $\kappa$.
\end{proof}

\begin{prop}\label{prop:ray_does_not_have_EPP}
The ray does not have the EPP or the $\aleph_0$-EPP.
\end{prop}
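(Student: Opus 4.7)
The plan is to take the host graph $\Gamma$ to be the ray $R$ itself and verify that it simultaneously defeats both properties. Writing $R = v_0 v_1 v_2 \ldots$, the proof rests on two elementary observations about ray-subgraphs of $R$. First, every subgraph of $R$ that is isomorphic to a ray must be a tail $v_i v_{i+1} v_{i+2} \ldots$ for some $i \geq 0$: connected subgraphs of a path are subpaths, and the only infinite ones are tails. Second, any two such tails share all but finitely many vertices, so $R$ contains no two vertex-disjoint ray subgraphs. Finally, for any finite $X \subseteq V(R)$, letting $N := \max\{i : v_i \in X\} + 1$, the tail $v_N v_{N+1} \ldots$ is a ray in $R - X$.

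For the EPP, I would fix an arbitrary candidate function $f: \NN \to \NN$ and show it is defeated at $k = 2$ by $\Gamma := R$. By the first observation, $\Gamma$ does not contain $2$ disjoint copies of $R$. By the third, for every $X \subseteq V(\Gamma)$ of size at most $f(2)$, the graph $\Gamma - X$ still contains a copy of $R$. Both alternatives in Definition \ref{def:EPP} therefore fail, so $R$ does not have the EPP.

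For the $\aleph_0$-EPP, the same host $\Gamma = R$ works verbatim: $\Gamma$ cannot contain $\aleph_0$ pairwise disjoint copies of $R$ (it does not even contain two), and no finite vertex set $X$ prevents $\Gamma - X$ from containing a copy of $R$. Hence neither alternative in the definition of the $\aleph_0$-EPP holds.

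There is no genuine obstacle here; the only point worth pausing on is confirming that a subgraph of $R$ isomorphic to a ray must literally be a tail of $R$, after which both parts of the statement follow by inspection of the same host graph.
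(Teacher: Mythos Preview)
Your proposal is correct and follows essentially the same approach as the paper: take the ray itself as host graph $\Gamma$, observe that it contains no two disjoint rays, yet $\Gamma - X$ still contains a ray for every finite $X$. The paper's proof is just a terser version of yours, omitting the explicit identification of ray-subgraphs with tails.
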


\begin{proof}
Consider a ray as host graph $\Gamma$.
Then $\Gamma$ does not contain contain 2 disjoint rays.
However, for every finite $X \subseteq V(\Gamma)$ the graph $\Gamma - X$ still contains a ray.
Therefore, the ray does not have the EPP or the $\aleph_0$-EPP (the former holds because it is not possible to define $f(2)$).
\end{proof}

For Proposition \ref{prop:finitely_many_rays}, we need the following result by Halin \cite{halin1975some}*{Theorem 2}:

\begin{lemma}\label{lem:ray-decomposition}
Let $G$ be a connected infinite locally finite graph such that the maximum number of disjoint rays in $G$ is $n \in \NN$.
Then there is a tree-decomposition $(R, \cV)$ of $G$ into finite parts where $R$ is a ray such that all adhesion sets have size $n$ and are pairwise disjoint.
\end{lemma}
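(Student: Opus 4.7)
The plan is to take $n$ disjoint rays $Q_1, \ldots, Q_n$ in $G$, which exist by hypothesis, and walk along them in parallel, cutting $G$ at a sequence of $n$-element separators to form the parts of the decomposition. Specifically, I would construct inductively vertices $v_{i,k} \in Q_i$ ($k \in \NN$) in their natural order along $Q_i$, set $S_k := \{v_{1,k}, \ldots, v_{n,k}\}$, and arrange that each $S_k$ is an $n$-separator of $G$ with a finite ``past'' $B_k$ (consisting of $S_k$ together with all vertices it separates from infinity along the $Q_i$). The decomposition tree will be a ray $R$ on vertices $t_0, t_1, \ldots$; the part $V_{t_k}$ will be the finite ``slab'' $S_k \cup W_k \cup S_{k+1}$, where $W_k$ collects all vertices of $G$ lying strictly between $S_k$ and $S_{k+1}$; and the adhesion sets $V_{t_k} \cap V_{t_{k+1}} = S_{k+1}$ will be pairwise disjoint and each of size exactly $n$ by construction.

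The main technical step, and the expected main obstacle, is the inductive choice of $S_{k+1}$: given an $n$-separator $S_k$ on the rays with finite past, one must find a later $n$-separator $S_{k+1}$ on the rays, disjoint from $S_k$, with the slab between them still finite. This is where the hypothesis on the maximum number of disjoint rays enters. I would work in the infinite component $C_k$ of $G - S_k$ containing the ray tails beyond $S_k$, and apply the Erd\H{o}s-Menger theorem (or a direct Menger-type argument) to obtain an $n$-separator of $C_k$ separating $S_k$ from infinity, chosen to consist of one vertex on each ray tail; call this separator $S_{k+1}$. The slab between $S_k$ and $S_{k+1}$ is then forced to be finite, since otherwise, by local finiteness of $G$ and K\"onig's Lemma, this slab would contain a ray, and that ray together with suitable tails of the $Q_i$ would yield $n+1$ pairwise disjoint rays in $G$, contradicting the maximality of $n$.

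Finally, to ensure $\bigcup_k V_{t_k} = V(G)$, fix an enumeration $V(G) = \{w_0, w_1, \ldots\}$ (possible since any connected locally finite graph is countable) and at each step $k$ additionally require $w_k \in B_{k+1}$; this is achievable by pushing $S_{k+1}$ sufficiently far along the rays, because local finiteness and connectedness force any given vertex to lie at finite graph-distance from $S_k$, hence in the past of some later separator. Once the $S_k$ and $V_{t_k}$ have been constructed in this manner, the tree-decomposition axioms follow directly from the separator property of the $S_k$: every vertex lies in at most two consecutive parts, every edge of $G$ is covered by some $V_{t_k}$ (its endpoints cannot straddle a separator without meeting it), and the coherence condition on paths in $R$ is trivial given the two-consecutive-part structure.
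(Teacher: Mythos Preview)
The paper does not prove this lemma itself; it is quoted from Halin~\cite{halin1975some}. Your strategy is the natural one and works when $G$ has a single end, but the lemma does not assume this, and your argument breaks in the multi-ended case. You posit a single ``infinite component $C_k$ of $G - S_k$ containing the ray tails beyond $S_k$'', yet when $G$ has more than one end a maximum family of $n$ disjoint rays sends rays to every end (since $n$ equals the sum of the end degrees), and the tails of rays belonging to distinct ends lie in \emph{distinct} infinite components of $G - S_k$. The double ray already illustrates this: here $n=2$, the two rays necessarily go to opposite ends, and $G-S_k$ has two infinite components for every choice of $S_k$ on the rays, so your $C_k$ is undefined and neither your Menger step nor your K\"onig finiteness argument applies as written.

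The repair is to run the argument in each infinite component separately. If the infinite components of $G - S_k$ are $C^1, \ldots, C^m$ and $n_j$ of the ray tails lie in $C^j$, then $C^j$ contains at most $n_j$ disjoint rays (otherwise, together with the ray tails in the other components, $G$ would have more than $n$); hence there is a size-$n_j$ separator in $C^j$ between $S_k$ and the ends of $C^j$, meeting each of those $n_j$ tails exactly once and leaving a finite slab by your K\"onig argument. The union over $j$ is the required $S_{k+1}$ of size $\sum_j n_j = n$. One further minor omission: the vertices in the finite components of $G - S_0$ are not covered by any of your parts $V_{t_k}=S_k\cup W_k\cup S_{k+1}$ and should be adjoined to $V_{t_0}$.
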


\begin{prop}\label{prop:finitely_many_rays}
Let $G$ be a connected infinite locally finite graph that does not contain infinitely many disjoint rays. Then $G$ does not have the EPP or the $\aleph_0$-EPP.
\end{prop}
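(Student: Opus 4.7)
The plan is to generalise the argument for the single ray (Proposition \ref{prop:ray_does_not_have_EPP}) by building a host graph $\Gamma$ whose ``ray backbone'' resembles that of $G$ and which contains a copy of $G$ starting from every ``level'' of that backbone, while admitting no more disjoint rays than $G$ itself.

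First I would invoke Lemma \ref{lem:ray-decomposition} on $G$ to obtain a tree-decomposition $(R, \cV)$ along a ray $R = r_0 r_1 r_2 \ldots$ into finite parts $V_{r_i}$ with pairwise disjoint adhesion sets $A_i := V_{r_i} \cap V_{r_{i+1}}$ of size $n$, where $n \geq 1$ is the maximum number of disjoint rays in $G$. Set $H_i := G[V_{r_0} \cup \dots \cup V_{r_i}]$ and $G_{i+1} := G[V_{r_{i+1}} \cup V_{r_{i+2}} \cup \dots]$, so $G$ equals $H_i \cup G_{i+1}$ glued along $A_i$. For each $i \geq 0$ I would take a fresh isomorphic copy $H^{(i)}$ of $H_i$ via an isomorphism $\phi_i\colon H_i \to H^{(i)}$ that restricts to the identity on $A_i$, with the new vertices $V(H^{(i)}) \setminus A_i$ pairwise disjoint and disjoint from $V(G)$. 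Then $\Gamma$ has vertex set $V(G) \cup \bigsqcup_i (V(H^{(i)}) \setminus A_i)$ and edge set $E(G) \cup \bigcup_i E(H^{(i)})$. By construction, for every $i$ the induced subgraph on $V(H^{(i)}) \cup V(G_{i+1})$ equals $H^{(i)} \cup G_{i+1}$ glued along $A_i$ and is isomorphic to $G$ via $\phi_i$ together with $\mathrm{id}_{G_{i+1}}$, producing a copy of $G$ in $\Gamma$ at every depth $i$.

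Next I would verify that $\Gamma - X$ contains a copy of $G$ for every finite $X \subseteq V(\Gamma)$. Since the adhesions $A_i$ are pairwise disjoint, each vertex of $G$ lies in at most two consecutive parts $V_{r_j}$, and the sets $V(H^{(i)}) \setminus A_i$ are pairwise disjoint; hence for $i$ large enough $X$ is disjoint from $V(H^{(i)}) \cup V_{r_{i+1}} \cup V_{r_{i+2}} \cup \dots$, so the copy of $G$ described above sits entirely inside $\Gamma - X$.

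Finally I would bound the number of disjoint rays in $\Gamma$ by $n$. The parts $V'_{r_j} := V_{r_j} \cup (V(H^{(j)}) \setminus A_j)$ give a tree-decomposition of $\Gamma$ along the same ray $R$ into finite parts with the unchanged adhesion sets $A_j$ of size $n$. Given any $m$ pairwise disjoint rays in $\Gamma$, choose $J$ exceeding the indices of all parts containing a starting vertex of one of these rays; then each of those rays has its starting vertex in the finite set $V'_{r_0} \cup \dots \cup V'_{r_J}$ and extends infinitely, so each must cross the cut $A_J$ through a distinct vertex, giving $m \leq |A_J| = n$. Since any copy of $G$ in $\Gamma$ would provide $n$ disjoint rays, two disjoint copies would supply $2n$ disjoint rays, contradicting this bound. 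As in Proposition \ref{prop:ray_does_not_have_EPP}, this simultaneously shows that no value $f(2)$ can witness the EPP and that $\Gamma$ admits no finite hitting set despite containing at most one disjoint copy of $G$, so the $\aleph_0$-EPP fails as well. The main obstacle I anticipate is ensuring that the ``local'' attachments really produce global copies of $G$, which is the reason for using the growing heads $H_i$ rather than one fixed head: the tails $G_{i+1}$ need not be mutually isomorphic, so one has to pair each of them with a head that completes it precisely to $G$.
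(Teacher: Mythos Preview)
Your proposal is correct and follows essentially the same approach as the paper. Your host graph $\Gamma$---the original $G$ with a fresh copy $H^{(i)}$ of the initial segment $G[V_{r_0}\cup\dots\cup V_{r_i}]$ glued on at each adhesion $A_i$---is exactly the paper's graph $\bigcup_k G_k$, where $G_0$ plays the role of your $G$ and the fresh head of $G_{k+1}$ is your $H^{(k)}$; the paper's observation that every ray in $\Gamma$ meets all but finitely many adhesion sets of $G_0$ is the same separator argument you give via the tree-decomposition $(R,(V'_{r_j}))$ of $\Gamma$.
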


\begin{proof}
Halin showed in \cite{halin1965uber} that the ray is ubiquitous.
Therefore, since $G$ does not contain infinitely many disjoint rays, there is a maximum number $n \in \NN$ of disjoint rays in $G$.

Consider the tree-decomposition $(R, \cV)$ of $G$ from Lemma \ref{lem:ray-decomposition} where $R = r_0 r_1 r_2 \dots$.
Let $G_0, G_1, G_2, \dots$ be infinitely many copies of $G$ such that:

\begin{itemize}
\item For all $j < k \in \NN$ the sets of vertices of $G_j$ and $G_k$ corresponding to $V(G) \setminus \bigcup \{ V_{r_i} : i \geq k \}$ are disjoint.
\item For all $j < k \leq i \in \NN$ and all $v \in V_{r_i}$, the vertex of $G_j$ corresponding to $v$ coincides with the vertex of $G_k$ corresponding to $v$.
\end{itemize}

Let $\Gamma := \bigcup_{k \in \NN} G_k$ (see Figure \ref{fig:finitely_many_rays}).
Since $\bigcap_{k \in \NN} G_k = \emptyset$, for every finite $X \subseteq V(\Gamma)$ there is $k \in \NN$ such that $G_k$ avoids $X$.

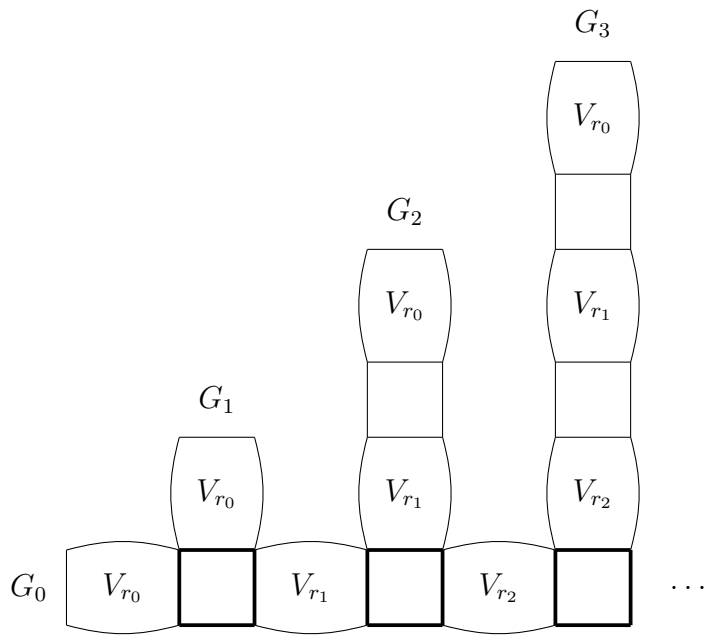
\begin{figure}
\center
\begin{tikzpicture}

\draw (-1.5,0) to (-1.5,1);
\draw[bend right=15] (-1.5,0) to (0,0);
\draw[bend left=15] (-1.5,1) to (0,1);

\draw[line width=0.5mm] (0,0) to (0,1);
\draw[line width=0.5mm] (0,1) to (1,1);
\draw[line width=0.5mm] (1,1) to (1,0);
\draw[line width=0.5mm] (1,0) to (0,0);

\draw[line width=0.5mm] (2.5,0) to (2.5,1);
\draw[line width=0.5mm] (2.5,1) to (3.5,1);
\draw[line width=0.5mm] (3.5,1) to (3.5,0);
\draw[line width=0.5mm] (3.5,0) to (2.5,0);

\draw[bend right=15] (1,0) to (2.5,0);
\draw[bend left=15] (1,1) to (2.5,1);

\draw[line width=0.5mm] (5,0) to (5,1);
\draw[line width=0.5mm] (5,1) to (6,1);
\draw[line width=0.5mm] (6,1) to (6,0);
\draw[line width=0.5mm] (6,0) to (5,0);

\draw[bend right=15] (3.5,0) to (5,0);
\draw[bend left=15] (3.5,1) to (5,1);

\draw (0,1)[bend left=15] to (0,2.5);
\draw (1,1)[bend right=15] to (1,2.5);
\draw (0,2.5) to (1,2.5);

\draw (2.5,1)[bend left=15] to (2.5,2.5);
\draw (3.5,1)[bend right=15] to (3.5,2.5);
\draw (2.5,2.5) to (3.5,2.5);

\draw (2.5,2.5) to (2.5,3.5);
\draw (3.5,2.5) to (3.5,3.5);
\draw (2.5,3.5) to (3.5,3.5);

\draw (2.5,3.5)[bend left=15] to (2.5,5);
\draw (3.5,3.5)[bend right=15] to (3.5,5);
\draw (2.5,5) to (3.5,5);

\draw (5,1)[bend left=15] to (5,2.5);
\draw (6,1)[bend right=15] to (6,2.5);
\draw (5,2.5) to (6,2.5);

\draw (5,2.5) to (5,3.5);
\draw (6,2.5) to (6,3.5);
\draw (5,3.5) to (6,3.5);

\draw (5,3.5)[bend left=15] to (5,5);
\draw (6,3.5)[bend right=15] to (6,5);
\draw (5,5) to (6,5);

\draw (5,5) to (5,6);
\draw (6,5) to (6,6);
\draw (5,6) to (6,6);

\draw (5,6)[bend left=15] to (5,7.5);
\draw (6,6)[bend right=15] to (6,7.5);
\draw (5,7.5) to (6,7.5);

\node at (6.8,0.5){$\dots$};

\node at (-2,0.5){$G_0$};
\node at (0.5,3){$G_1$};
\node at (3,5.5){$G_2$};
\node at (5.5,8){$G_3$};

\node at (-0.75,0.5){$V_{r_0}$};
\node at (1.75,0.5){$V_{r_1}$};
\node at (4.25,0.5){$V_{r_2}$};

\node at (0.5,1.75){$V_{r_0}$};

\node at (3,1.75){$V_{r_1}$};
\node at (3,4.25){$V_{r_0}$};

\node at (5.5,1.75){$V_{r_2}$};
\node at (5.5,4.25){$V_{r_1}$};
\node at (5.5,6.75){$V_{r_0}$};

\end{tikzpicture}
\caption{The graph $\Gamma$ from the proof of Proposition \ref{prop:finitely_many_rays}. Adhesion sets in the graphs $G_i$ are depicted by squares and each part $V_{r_i}$ consists of the respective labelled area together with its adjacent squares. Note that every ray in $\Gamma$ meets all but finitely many of the thickly drawn adhesion sets.}
\label{fig:finitely_many_rays}
\end{figure}

Furthermore, it is clear from Figure \ref{fig:finitely_many_rays} that for every ray $S$ in $\Gamma$ there is $i \in \NN$ such that for all $j \geq i$, the ray $S$ meets the set of vertices of $G_0$ corresponding to the adhesion set $V_{r_j} \cap V_{r_{j + 1}}$.
As all adhesion sets have size $n$, the graph $\Gamma$ cannot contain more than $n$ disjoint rays.
Therefore, $\Gamma$ does not contain two disjoint copies of $G$.
Since we have also seen that $\Gamma - X$ contains a copy of $G$ for all finite $X \subseteq V(\Gamma)$,
it follows that $G$ does not have the EPP or the $\aleph_0$-EPP.
\end{proof}

\begin{prop}\label{prop:star_models_do_not_have_EPP}
$\cM(K_{1, \aleph_0})$ does not have the EPP or the $\aleph_0$-EPP.
\end{prop}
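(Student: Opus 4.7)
The plan is to exhibit a single graph $\Gamma$ witnessing failure of both the EPP and the $\aleph_0$-EPP for $\cM(K_{1,\aleph_0})$. I take $\Gamma$ to be the \emph{ray with pendant leaves}: let $V(\Gamma) := \{c_i : i \in \NN\} \cup \{u_i : i \in \NN\}$ with edges $c_i c_{i+1}$ for all $i \in \NN$ (forming a one-way infinite ray) together with a pendant edge $c_i u_i$ at every ray vertex. Clearly $\Gamma \in \cM(K_{1,\aleph_0})$: take $\{c_i : i \in \NN\}$ as the centre branch set and $\{u_j\}$ for each $j \in \NN$ as the leaf branch sets. The argument then splits into two claims: (i) $\Gamma$ contains no two vertex-disjoint subgraphs from $\cM(K_{1,\aleph_0})$; and (ii) for every finite $X \subseteq V(\Gamma)$, the graph $\Gamma - X$ still lies in $\cM(K_{1,\aleph_0})$. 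Claim (i) rules out even two disjoint copies, while together with claim (ii) it prevents both the EPP (at $k = 2$ no value $f(2)$ can work) and the $\aleph_0$-EPP (no finite hitting set exists).

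For (i), I would assume for contradiction that two such minors with centre branch sets $V_c^{(1)}, V_c^{(2)}$ and disjoint total vertex sets exist. Since $\Gamma$ has maximum degree $3$, any finite connected subgraph has only finitely many external neighbours, while each $V_c^{(m)}$ must have at least $\aleph_0$ external neighbours in order to accommodate the infinitely many pairwise disjoint leaf branch sets; hence each $V_c^{(m)}$ is infinite. The key structural observation is that $u_i$ has its sole neighbour at $c_i$, so no $u_i$ can serve as an internal vertex of a path in $\Gamma$; consequently $I_m := V_c^{(m)} \cap \{c_i : i \in \NN\}$ is itself a connected subset of the ray. Since $V_c^{(m)}$ is infinite, so is $I_m$, and a connected infinite subset of the ray $c_0 c_1 c_2 \dots$ is necessarily a tail $\{c_i : i \geq N_m\}$. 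Two such tails always intersect, contradicting disjointness of the branch sets.

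Claim (ii) is direct: given a finite $X$, choose $M$ larger than every index $i$ with $c_i \in X$, set $V_c := \{c_i : i \geq M\}$, and use $V_{l_j} := \{u_j\}$ for the cofinitely many $j \geq M$ with $u_j \notin X$; this realises a $K_{1,\aleph_0}$ minor in $\Gamma - X$. The main obstacle in the whole argument is really step (i), specifically ruling out that the centre branch set of a $K_{1,\aleph_0}$ minor could hide inside some exotic infinite subset of $V(\Gamma)$ that does not contain a tail of the ray. This is exactly what the combination of local finiteness and the pendant-leaf structure of $\Gamma$ forces, via the observation that leaves can never be interior vertices of paths.
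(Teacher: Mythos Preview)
Your proof is correct and takes essentially the same approach as the paper: your ``ray with pendant leaves'' is precisely the infinite comb the paper uses, and your claims (i) and (ii) are exactly what the paper asserts. The only difference is that the paper states (i) without justification, whereas you supply the details (infinite centre branch set via local finiteness, then the tail argument); your argument there is sound.
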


\begin{proof}
Let $\Gamma$ be the infinite comb (Figure \ref{fig:inf_comb}).
Then $\Gamma$ does not contain 2 disjoint copies of $K_{1, \aleph_0}$ as minors, but for every finite set $X \subseteq V(\Gamma)$, the graph $\Gamma - X$ contains $K_{1, \aleph_0}$ as a minor.
Thus $\cM(K_{1, \aleph_0})$ does not have the EPP or the $\aleph_0$-EPP.
\end{proof}

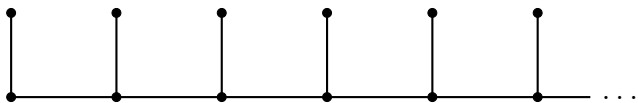
\begin{figure}[ht]
\center
\begin{tikzpicture}[scale=1.4]
	\foreach \x in {0,1,2,3,4,5} \foreach \y in {0,0.8} \draw[fill] (\x,\y) circle (1.2pt);
	\foreach \x in {0,1,2,3,4,5} \draw[thick] (\x,0) -- (\x,0.8);
	\draw[thick] (0,0) -- (5.5,0) node[right] {$\dots$};
\end{tikzpicture}
\caption{The infinite comb.}
\label{fig:inf_comb}
\end{figure}

\begin{prop}\label{prop:ubiquity}
If a class of graphs has the $\aleph_0$-EPP, then it is ubiquitous.
\end{prop}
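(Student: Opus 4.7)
The plan is a direct two-line argument from the definitions. Suppose $\cG$ has the $\aleph_0$-EPP and let $\Gamma$ be any graph that contains $n$ pairwise disjoint copies of graphs from $\cG$ as subgraphs for every $n \in \NN$. The goal is to exhibit $\aleph_0$ pairwise disjoint such copies in $\Gamma$.

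Applying the $\aleph_0$-EPP to $\Gamma$ gives two alternatives. In the first, $\Gamma$ already contains $\aleph_0$ pairwise disjoint copies of graphs from $\cG$, and we are done. In the second, there is a set $X \subseteq V(\Gamma)$ of size less than $\aleph_0$, i.e.\ a finite set, such that $\Gamma - X$ contains no copy of any graph from $\cG$. To rule this out, set $k := |X|$ and use the hypothesis with $n = k+1$ to obtain $k+1$ pairwise disjoint copies of graphs from $\cG$ in $\Gamma$. Since the copies are pairwise disjoint, $X$ can meet the vertex set of at most $k$ of them, so at least one copy is disjoint from $X$ and therefore sits inside $\Gamma - X$, contradicting the choice of $X$.

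Hence the second alternative is impossible, and $\Gamma$ must contain $\aleph_0$ pairwise disjoint copies of graphs from $\cG$. This is exactly what it means for $\cG$ to be ubiquitous. There is no real obstacle here; the only thing to be careful about is translating ``a set of size less than $\aleph_0$'' into ``a finite set'' and then noting that finitely many vertices can intersect only finitely many pairwise disjoint subgraphs.
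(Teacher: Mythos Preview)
Your argument is correct and matches the paper's proof essentially verbatim: both show that the second alternative of the $\aleph_0$-EPP cannot occur by taking $|X|+1$ disjoint copies and observing that a finite $X$ misses at least one of them. The paper merely compresses this pigeonhole step into a single sentence.
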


\begin{proof}
Let $\cG$ be any class of graphs that has the $\aleph_0$-EPP.
To see that $\cG$ is ubiquitous, let $\Gamma$ be any graph that contains $n$ disjoint copies of graphs from $\cG$ for all $n \in \NN$.
Then $\Gamma - X$ contains a copy of $G$ for every finite set $X \subseteq V(G)$, and it follows from the $\aleph_0$-EPP that $\Gamma$ contains infinitely many disjoint copies of graphs from $\cG$.
\end{proof}

\section{Constructing counterexamples to Seymour's Self-Minor Conjecture}\label{sec:self-minors}

A graph $H$ is a \emph{proper minor} of a graph $G$ if $H$ is a minor of some proper subgraph of $G$. In this section, we prove:

\notproperselfminor*

Our construction relies on the existence of large minor-antichains of infinite graphs. Their existence is proved by Komjáth \cite{komjath1994minors} (also see \cite{pitz2023minor}):

\begin{thm}\label{thm:minor_antichain}
For every uncountable cardinal $\kappa$, there are $2^\kappa$ graphs of size $\kappa$ none of them being a minor of another.
\end{thm}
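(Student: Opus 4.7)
The plan is to apply Theorem~\ref{thm:minor_antichain} to obtain a minor antichain $\{A_\alpha : \alpha < 2^\kappa\}$ of $\kappa$-sized graphs, and then take $G$ to be the disjoint union of $\kappa$ suitably chosen antichain members. Concretely, pick $\kappa$ distinct indices $\alpha_\xi$ for $\xi<\kappa$ and set $G := \bigsqcup_{\xi<\kappa} A_{\alpha_\xi}$, so that $|G|=\kappa$. The antichain property of the components of $G$ is then used to constrain how $G$ could possibly embed as a minor of a proper subgraph of itself.

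To verify that $G$ is not a proper minor of itself, I would argue by contradiction: assume $G$ is a minor of some proper subgraph $H\subsetneq G$. Any smaller subgraph yields a stronger hypothesis, so I may assume $H = G-v$ or $H = G-e$ for a single vertex or edge. Let $(V_x)_{x\in V(G)}$ be branch sets in $H$ realising $G$ as a minor. For each $\xi$, the subfamily indexed by $V(A_{\alpha_\xi})$ is a minor model of the connected graph $A_{\alpha_\xi}$; the union of these branch sets together with their connecting edges in $H$ is connected, hence lies in a single connected component $C_\xi$ of $H$. Since $H\subseteq G = \bigsqcup_\eta A_{\alpha_\eta}$, this $C_\xi$ is contained in some component $A_{\alpha_\eta}$ of $G$, which gives $A_{\alpha_\xi}\preceq A_{\alpha_\eta}$ as minors. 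By the antichain property, $\xi=\eta$. Thus each $A_{\alpha_\xi}$ is embedded as a minor into a subgraph $C_\xi$ of its own copy in $G$. Because $H$ is a proper subgraph of $G$, for at least one index $\xi^*$ the subgraph $C_{\xi^*}$ is a proper subgraph of $A_{\alpha_{\xi^*}}$, which means $A_{\alpha_{\xi^*}}$ is a proper minor of itself.

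The main obstacle will be to choose the indices $\alpha_\xi$ so that no $A_{\alpha_\xi}$ is itself a proper minor of itself, which is what turns the above derivation into a contradiction. I would try to show that at least $\kappa$ of the $2^\kappa$ antichain members have this rigidity property, via the following pigeonhole approach. For each non-rigid $A_\alpha$, fix a minor-equivalent proper subgraph $M_\alpha\subsetneq A_\alpha$ witnessing $A_\alpha\preceq M_\alpha$. Distinct antichain members $A_\alpha, A_\beta$ lie in disjoint minor-equivalence classes, so $M_\alpha\cong M_\beta$ would entail $A_\alpha\sim A_\beta$, contradicting the antichain property; hence $\alpha\mapsto[M_\alpha]_\cong$ is injective on the non-rigid indices. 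The hope is that a sharper structural bound on the possible isomorphism types of such $M_\alpha$ (using, for instance, specific features of the Komj\'ath antichain) forces the range of this map to have cardinality strictly less than $2^\kappa$, leaving at least $2^\kappa>\kappa$ rigid antichain members from which to select. If this cardinality argument turns out to be insufficient, a fallback strategy is to strengthen the construction of $G$ by decorating the disjoint union with further rigidifying gadgets — for instance attaching distinct antichain members as vertex-specific labels — so that any minor embedding of $G$ into a proper subgraph is ruled out on structural rather than purely combinatorial grounds.
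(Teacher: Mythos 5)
Your proposal does not prove the statement at issue: its very first step is to ``apply Theorem~\ref{thm:minor_antichain} to obtain a minor antichain,'' i.e.\ it assumes exactly what is to be shown, so the argument is circular as a proof of Theorem~\ref{thm:minor_antichain}. In the paper this theorem is not proved at all but quoted from Komj\'ath \cite{komjath1994minors}; a proof would have to construct, for each uncountable $\kappa$, a family of $2^\kappa$ graphs of size $\kappa$ that are pairwise incomparable under the minor relation, and nothing in your sketch addresses that construction. What you have actually sketched is an argument for a different result of the paper, Theorem~\ref{thm:not_proper_selfminor} (a $\kappa$-sized graph that is not a proper minor of itself), with the antichain theorem taken as input.

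Even read as an attempt at Theorem~\ref{thm:not_proper_selfminor}, there is a genuine gap which you yourself flag: the disjoint-union argument only yields that some antichain member $A_{\alpha_{\xi^*}}$ is a minor of a proper subgraph of itself, so you need $\kappa$ members that are \emph{rigid} in this sense, and that is essentially the difficulty the theorem is about. Your pigeonhole idea does not close it: injectivity of $\alpha\mapsto[M_\alpha]_\cong$ maps into the isomorphism types of graphs of size at most $\kappa$, of which there are $2^\kappa$, so no cardinality contradiction arises and all $2^\kappa$ members could a priori be non-rigid. The paper avoids this issue entirely by a different construction: it makes the antichain members $2$-connected (adding two adjacent dominating vertices, Lemma~\ref{lem:adding_dominating_vertex}), glues them as the blocks of a single graph along a $\kappa$-regular tree, and uses Oporowski's block lemma (Lemma~\ref{lem:2-connected_minor}) to force every block to be modelled inside itself and every branch set to be a singleton --- no rigidity of the individual antichain members is needed. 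If you want to salvage your route, you would need either a rigid antichain (which is not what Theorem~\ref{thm:minor_antichain} provides) or a gluing device of this block-based kind; and in any case a proof of Theorem~\ref{thm:minor_antichain} itself would have to be supplied separately, e.g.\ along the lines of Komj\'ath's construction.
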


Let us briefly look at the context of Theorem \ref{thm:not_proper_selfminor}.
Seymour's Self-Minor Conjecture states that every infinite graph is a proper minor of itself.
While it is still open whether the conjecture holds for countable graphs, Oporowski \cite{oporowski1990selfminor} disproved Seymour's conjecture by constructing a continuum-sized graph which is not a proper minor of itself.
In Theorem \ref{thm:not_proper_selfminor} we show more generally that there exist counterexamples of all uncountable cardinalities.
It should be noted that Oporowski's result (1990) is older than Komjáth's Theorem \ref{thm:minor_antichain} (1995), and instead relies on a weaker result by Thomas \cite{thomas1988counterexample} from 1988.
Our construction is different from Oporowski's and slightly simpler, but some of the ideas and tools we use are the same:

\begin{lemma}[\cite{oporowski1990selfminor}, Lemma 1]\label{lem:adding_dominating_vertex}
Let $G'$ be obtained from $G$ by adding a new vertex adjacent to every vertex of $G$, and let $H'$ be obtained from $H$ similarly. Then $H$ is a minor of $G$ if and only if $H'$ is a minor of $G'$.
\end{lemma}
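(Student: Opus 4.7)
The plan is to establish the two implications separately: the forward direction is essentially trivial, while the backward direction requires a small case analysis on where the newly added universal vertex of $G'$ sits in the branch sets. Throughout, I write $v^*$ for the vertex added to form $G'$ from $G$ and $v'$ for the vertex added to form $H'$ from $H$.

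For the forward direction, I would start with branch sets $(V_h : h \in V(H))$ witnessing that $H$ is a minor of $G$ and extend them to a model of $H'$ in $G'$ by simply defining $V_{v'} := \{v^*\}$. The only thing to check is that for each edge $v'h \in E(H')$ there is a $V_{v'}$--$V_h$ edge in $G'$, and this is immediate since $v^*$ is adjacent to every vertex of $G$ in $G'$.

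For the backward direction, I would take branch sets $(V_{h'} : h' \in V(H'))$ witnessing that $H'$ is a minor of $G'$ and split on where $v^*$ lies. In the easy case, $v^*$ either belongs to $V_{v'}$ or to no branch set at all; then each $V_h$ for $h \in V(H)$ avoids $v^*$, so $V_h \subseteq V(G)$ and $V_h$ remains connected in $G$, and because $E(H) \subseteq E(H')$ the edges between these branch sets required by $H$ already exist in $G$. The genuinely interesting case is when $v^* \in V_{h_0}$ for some (necessarily unique) $h_0 \in V(H)$; here my strategy is to swap the roles of $V_{h_0}$ and $V_{v'}$, discarding $V_{h_0}$ and instead setting $W_{h_0} := V_{v'}$ while keeping $W_h := V_h$ for $h \in V(H) \setminus \{h_0\}$. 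Each $W_h$ is then disjoint from $V_{h_0}$, hence avoids $v^*$ and lies in $V(G)$, and connectedness transfers from $G'$ to $G$.

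I expect the main obstacle to be verifying the edges incident to $h_0$ in $H$, since the original $V_{h_0}$--$V_h$ edges in $G'$ are no longer available after $V_{h_0}$ has been discarded. The key observation that removes this obstacle is that the universality of $v'$ in $H'$ forces $v'h \in E(H')$ for \emph{every} $h \in V(H)$; this yields a $V_{v'}$--$V_h$ edge in $G'$, and since $V_{v'}$ and each $V_h$ avoid $v^*$, that edge already lies in $G$ and provides precisely the needed $W_{h_0}$--$W_h$ edge.
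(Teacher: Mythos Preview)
Your proof is correct. Note, however, that the paper does not supply its own proof of this lemma: it is quoted from \cite{oporowski1990selfminor} as Lemma~1 and used as a black box. There is therefore nothing in the present paper to compare your argument against; your case analysis on the location of $v^*$ and the swap $W_{h_0} := V_{v'}$ in the nontrivial case is the standard way to handle this and matches Oporowski's original argument.
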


A \emph{block} of a graph is a maximal connected subgraph that has no cutvertex.

\begin{lemma}[\cite{oporowski1990selfminor}, Lemma 3]\label{lem:2-connected_minor}
Let $H$ be a minor of $G$ with branch sets $\{V_h : h \in V(H)\}$. Then for every block $A$ of $H$ there is a block $B$ of $G$ such that $A$ is a minor of $B$ with branch sets $\{V_h \cap V(B) : h \in V(A)\}$.
\end{lemma}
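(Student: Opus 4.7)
The plan is to reduce the lemma to a short structural fact about blocks combined with the observation that a cycle in $A$ can be lifted to a cycle in $G$ through the branch sets.

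First I would establish the following preliminary fact: \emph{if $S\subseteq V(G)$ is connected and $B$ is any block of $G$, then $G[S\cap V(B)]$ is connected (possibly empty).} For this, one shows that every $G$-path $P$ between two vertices of $V(B)$ lies entirely in $V(B)$: if $P$ left $B$, then, being simple, it would leave through some cutvertex $c_1\in V(B)$ and re-enter through a distinct cutvertex $c_2\in V(B)$, yielding two distinct $c_1$--$c_2$ paths in the block-cutvertex tree of $G$ (one via $B$, one via the blocks visited outside $B$), a contradiction. Given this, any path in $G[S]$ between vertices $x,y\in S\cap V(B)$ stays in $V(B)$, and each of its edges lies in $B$ (any edge with both endpoints in a block belongs to that block), which yields connectedness of $G[S\cap V(B)]$.

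Next, for each edge $hh'\in E(A)$ I pick an edge $e_{hh'}$ of $G$ between $V_h$ and $V_{h'}$, which exists by the definition of a minor model. The main claim is that all these chosen edges lie in a single block $B$ of $G$. In the main case where $A$ is $2$-connected with $|V(A)|\geq 3$, any two edges $e,f$ of $A$ lie on a common cycle $C\subseteq A$. I would lift $C$ to a cycle in $G$ by concatenating the edges $e_{hh'}$ for $hh'\in E(C)$ with, for each $h\in V(C)$, a simple path in $G[V_h]$ joining the endpoints of the two chosen edges incident to $V_h$. Disjointness of the branch sets guarantees that the resulting closed walk is a simple cycle, so the lifts of $e$ and $f$ lie on a common cycle of $G$ and therefore in a common block; since every edge of $G$ lies in a unique block, all $e_{hh'}$ belong to the same block $B$.

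It remains to verify that this $B$ works, which is routine. Every $h\in V(A)$ has a neighbor $h'$ in $A$, so $V_h$ contains an endpoint of $e_{hh'}\in E(B)$, giving $V_h\cap V(B)\neq\emptyset$; the preliminary fact yields connectedness of $V_h\cap V(B)$; and for each edge $hh'\in E(A)$ the edge $e_{hh'}$ lies in $B$ and connects $V_h\cap V(B)$ to $V_{h'}\cap V(B)$. The degenerate cases in which $A$ is a single edge or an isolated vertex are immediate: pick any connecting edge (respectively, any vertex of $V_h$) and let $B$ be the unique block of $G$ containing it. I expect the cycle-lifting step to be the main technical obstacle, since it requires choosing the internal connecting paths within the branch sets carefully so that the resulting closed walk is genuinely a simple cycle rather than merely a closed walk.
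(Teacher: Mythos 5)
The paper does not prove this lemma at all: it is quoted verbatim from Oporowski's paper (Lemma 3 there), so there is no in-paper argument to compare against. Judged on its own, your proof is correct and essentially the standard argument one would expect: the preliminary fact that a connected vertex set of $G$ meets each block in a connected set (via the fact that any path between two vertices of a block stays inside that block), plus the lifting of a cycle of $A$ through the branch sets to a cycle of $G$ to force all chosen connecting edges into one block, and the routine verification that the sets $V_h \cap V(B)$ then witness $A$ as a minor of $B$. The cycle-lifting step you flagged as delicate is in fact unproblematic exactly for the reason you give: the connecting paths live in pairwise disjoint branch sets and are simple, and the closed walk uses at least the $|E(C)|\geq 3$ lifted edges, so it is a genuine cycle. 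Two cosmetic points: in the isolated-vertex case a vertex need not lie in a \emph{unique} block (cutvertices lie in several), so just pick any block containing the chosen vertex; and in the preliminary fact it is slightly cleaner to argue that every component of $G-V(B)$ attaches to $B$ in a single cutvertex, which immediately forbids a path from leaving and re-entering $B$, rather than invoking two paths in the block-cutvertex tree. Neither affects correctness.
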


\begin{proof}[Proof of Theorem \ref{thm:not_proper_selfminor}]
By Theorem \ref{thm:minor_antichain} there is a $\kappa$-sized minor-antichain of graphs of size $\kappa$.
For every graph in this antichain, add two vertices which are adjacent to all vertices of this graph and to each other.
The resulting set $\cH$ of graphs is again a minor-antichain by Lemma \ref{lem:adding_dominating_vertex} and all graphs in $\cH$ are 2-connected.
Suppose without loss of generality that the graphs in $\cH$ are pairwise disjoint.

Let $T$ be a $\kappa$-regular tree and fix a bijection $f: V(T) \to \cH$.
Moreover, fix for every $t \in V(T)$ a bijection $g_t : N_T(t) \to V(f(t))$, which is possible since $N_T(t)$ and $V(f(t))$ are both sets of size $\kappa$.
We construct a graph $G$ from $\bigcup \cH$ by identifying for every edge $tu \in E(T)$ the vertices $g_t(u)$ and $g_u(t)$.
Note that the elements of $\cH$ are, up to isomorphism, precisely the blocks of $G$, and every $x \in V(G)$ is contained in precisely two blocks of $G$.

Now suppose that $G$ is a minor of itself with branch sets $\{ V_x : x \in V(G) \}$.
To show that $G$ is not a proper minor of itself, we prove the stronger assertion that $V_x = \{ x \}$ for all $x \in V(G)$.

By Lemma \ref{lem:2-connected_minor}, for every block $A$ of $G$ there is a block $B$ of $G$ such that $A$ is a minor of $B$ with branch sets $\{ V_x \cap V(B) : x \in V(A) \}$.
Since $\cH$ is a minor-antichain, we must have $A = B$.
In particular, $V_x \cap V(A) \neq \emptyset$ for all $x \in V(A)$.

Consider any vertex $x \in V(G)$ and let $A, A'$ be the two blocks of $G$ containing $x$.
As observed above, $V_x$ meets both $A$ and $A'$.
Since $V_x$ is connected and $x$ separates $A$ from $A'$ in $G$, we must have $x \in V_x$.
Since the same holds for all $x \in V(G)$ and since distinct branch sets are disjoint, it follows that $V_x = \{ x \}$.
\end{proof}

\section{Graphs without the $\kappa$-EPP}\label{sec:graphs_without_kappa-EPP}

In this section, we prove:

\counterexamplekappaEPP*

We will obtain such a graph by applying the following operation to any $\kappa$-sized graph which is not a proper subgraph of itself.
For a graph $G$, we write $G^\mathsf{V}$ for the graph obtained from $G$ by adding two new vertices $v_1, v_2$ for each $v \in V(G)$ and adding edges $vv_1$ and $vv_2$. Note that every vertex of $G^\mathsf{V}$ has either degree 1 or degree at least 3, and the set of vertices of degree at least 3 of $G^\mathsf{V}$ is $V(G)$.

The following lemma is straightforward and we omit the proof:
\begin{lemma}\label{lem:G^V_is_subgraph}
A graph $H$ is a subgraph of a graph $G$ if and only if $H^\mathsf{V}$ is a subgraph of $G^\mathsf{V}$.
\end{lemma}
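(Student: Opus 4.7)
The forward direction is essentially free: a subgraph embedding $\phi : V(H) \to V(G)$ extends to $V(H^\mathsf{V}) \to V(G^\mathsf{V})$ by sending each pendant $v_i$ of a vertex $v \in V(H)$ to the corresponding pendant of $\phi(v)$ in $G^\mathsf{V}$. Injectivity is preserved (the newly added pendants in $G^\mathsf{V}$ are indexed by $V(G)$ and so each vertex of $V(G)$ receives a fresh pair), and every edge of $H^\mathsf{V}$ is either an edge of $H$ (mapped into $E(G) \subseteq E(G^\mathsf{V})$) or a pendant edge (mapped to the matching pendant edge in $G^\mathsf{V}$).

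For the converse, suppose $\psi : V(H^\mathsf{V}) \to V(G^\mathsf{V})$ witnesses that $H^\mathsf{V}$ is a subgraph of $G^\mathsf{V}$. The plan is to show that $\psi$ must send $V(H)$ into $V(G)$; then the restriction $\psi \upharpoonright V(H)$ will be the desired subgraph embedding $H \hookrightarrow G$. The key point is a degree argument: each $v \in V(H)$ has degree at least $2$ in $H^\mathsf{V}$ because its two attached pendants $v_1, v_2$ contribute two edges, while every vertex of $V(G^\mathsf{V}) \setminus V(G)$ is by construction a pendant of degree exactly $1$ in $G^\mathsf{V}$. A subgraph embedding cannot strictly decrease the degree of a vertex — the $\psi$-images of the neighbours of $v$ in $H^\mathsf{V}$ are distinct neighbours of $\psi(v)$ in $G^\mathsf{V}$ — so $\deg_{G^\mathsf{V}}(\psi(v)) \geq 2$, which forces $\psi(v) \in V(G)$.

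Once we know $\psi(V(H)) \subseteq V(G)$, the conclusion is formal. For every edge $vw \in E(H) \subseteq E(H^\mathsf{V})$, the pair $\psi(v)\psi(w)$ is an edge of $G^\mathsf{V}$ whose endpoints both lie in $V(G)$; but by construction the only edges of $G^\mathsf{V}$ with both endpoints in $V(G)$ are the edges of $G$, so $\psi(v)\psi(w) \in E(G)$. Together with the injectivity inherited from $\psi$, this shows that $\psi \upharpoonright V(H)$ is a subgraph embedding of $H$ into $G$. There is no real obstacle here: the content of the lemma is simply that attaching two pendants to every vertex makes the original vertices of $G$ degree-distinguishable from the added ones, so any subgraph embedding of $H^\mathsf{V}$ into $G^\mathsf{V}$ is forced to respect this division.
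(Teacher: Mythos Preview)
Your proof is correct and is precisely the straightforward degree argument the paper has in mind; indeed, the paper omits the proof entirely, and the observation immediately preceding the lemma (that the vertices of $G^\mathsf{V}$ of degree~$\geq 2$ are exactly those of $V(G)$) is the hint toward your argument.
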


\begin{thm}\label{thm:graph_without_kappa-EPP}
Let $\kappa$ be an infinite cardinal and let $G$ be a graph of size at least $\kappa$ which is not a proper subgraph of itself. Then $G^\mathsf{V}$ does not have the $\kappa$-EPP.
\end{thm}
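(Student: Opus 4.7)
I would construct a host graph $\Gamma$ witnessing the failure of the $\kappa$-EPP for $H := G^{\mathsf{V}}$ by gluing $\kappa$ disjoint copies of $H$ together pairwise at leaves. Concretely, take pairwise disjoint copies $(H^\alpha)_{\alpha < \kappa}$ of $H$ and write $G^\alpha \subseteq H^\alpha$ for the corresponding copy of $G$. Since $H^\alpha$ has $2|V(G)| \geq \kappa$ leaves, I can choose, for every $\alpha$, pairwise distinct leaves $\ell^\alpha_\beta$ of $H^\alpha$ indexed by $\beta \in \kappa \setminus \{\alpha\}$. Let $\Gamma$ be the quotient of $\bigsqcup_{\alpha < \kappa} H^\alpha$ by the identifications $\ell^\alpha_\beta \sim \ell^\beta_\alpha$ for every unordered pair $\{\alpha, \beta\}$. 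Then each $H^\alpha$ is a subgraph of $\Gamma$ isomorphic to $H$, and two distinct $H^\alpha, H^\beta$ intersect in exactly the identified vertex $\ell^\alpha_\beta = \ell^\beta_\alpha$.

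The main technical step is the \emph{uniqueness of copies}: every subgraph of $\Gamma$ isomorphic to $H$ equals some $H^\alpha$. For this, observe that the vertices of $\Gamma$ of degree at least $3$ are exactly $\bigsqcup_\alpha V(G^\alpha)$, since every leaf of some $H^\alpha$ has $\Gamma$-degree at most $2$ (equal to $2$ if identified with a leaf of another $H^\beta$, else $1$). Hence the core of any copy $H' \subseteq \Gamma$ of $H$ lies in $\bigsqcup_\alpha V(G^\alpha)$. No edge of $\Gamma$ joins distinct $V(G^\alpha)$'s directly (such edges would pass through a degree-$2$ leaf), so (assuming $G$ is connected) the whole core lies in some single $V(G^\alpha)$; since $G$ is not a proper subgraph of itself, the core must equal all of $G^\alpha$. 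Finally, the two copy-leaves attached in $H'$ to each core vertex $v \in V(G^\alpha)$ have to lie in $N_\Gamma(v) \setminus V(G^\alpha)$, which contains precisely the two leaves of $v$ in $H^\alpha$. Hence $H' = H^\alpha$. With this uniqueness, both clauses of the $\kappa$-EPP fail: $\Gamma$ contains no two disjoint copies of $H$ (since every two $H^\alpha$'s share a vertex), and every hitting set $X$ satisfies $2|X| \geq \kappa$, hence $|X| \geq \kappa$, because each vertex of $\Gamma$ belongs to at most two of the $H^\alpha$.

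The principal obstacle is the uniqueness claim, and in particular its subtlety when $G$ is disconnected: the core of $H'$ could then a priori split its components across different $V(G^\alpha)$'s, yielding "mixed" copies which might well be pairwise disjoint in large numbers. Handling this requires additional care; one should exploit that the not-proper-subgraph hypothesis forces each component of $G$ to occur with finite multiplicity in $G$, and then refine the leaf-identification scheme (for example, identifying one leaf per pair of indices \emph{per component} of $G$) so that any two mixed copies are still forced to share an identified leaf.
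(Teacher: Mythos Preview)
Your construction and argument are essentially the paper's. The only packaging difference is that the paper isolates the observation $H \subseteq G \Leftrightarrow H^{\mathsf V} \subseteq G^{\mathsf V}$ as a separate lemma (Lemma~\ref{lem:G^V_is_subgraph}), so that $G^{\mathsf V}$ itself is not a proper subgraph of itself, and then argues the uniqueness in one line: the glued leaves are precisely the degree-$2$ vertices of $\Gamma$, and since $G^{\mathsf V}$ has no degree-$2$ vertices, any copy of $G^{\mathsf V}$ in $\Gamma$ lies inside a single $H^\alpha$ and hence equals it. This is the same mechanism as your core analysis, just phrased more tersely and without separating out the ``core'' and ``leaves'' steps.

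Your worry about disconnected $G$ is well placed: the inference ``the copy lies in a single $H^\alpha$'' uses connectedness (otherwise different components of the copy could land in different $H^\alpha$), and the paper's one-line deduction glosses over exactly this point. Note, however, that your proposed per-component repair does not work as stated, since a component of $G$ of size less than $\kappa$ carries fewer than $\kappa$ leaves in $G^{\mathsf V}$ and so cannot accommodate one identification per pair $\{\alpha,\beta\}$; any fix for the disconnected case would have to exploit more carefully that each isomorphism type of component occurs only finitely often in a graph which is not a proper subgraph of itself.
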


\begin{proof}
Consider a set $\cS$ consisting of $\kappa$ copies of $G^\mathsf{V}$ such that:
\begin{enumerate}[label=(\roman*)]
\item for all $H \neq H' \in \cS$, the set of vertices of degree at least 3 of $H$ is disjoint from the set of vertices of degree at least 3 of $H'$,
\item for all $H \neq H' \in \cS$, a degree 1 vertex of $H$ coincides with a degree 1 vertex of $H'$, and
\item every vertex of $\bigcup \cS$ is contained in at most two distinct graphs from $\cS$.
\end{enumerate}
Since $G^\mathsf{V}$ has $\kappa$ vertices of degree 1, such a set $\cS$ can easily be obtained by beginning with $\kappa$ disjoint copies of $G^\mathsf{V}$ and recursively identifying pairs of degree 1 vertices.

We show that $\Gamma := \bigcup \cS$ witnesses that $G^\mathsf{V}$ does not have the $\kappa$-EPP.
Indeed, by (iii), $\Gamma - X$ contains a copy of $G^\mathsf{V}$ for all $X \subseteq V(\Gamma)$ of size less than $\kappa$.
It is left to show that $\Gamma$ does not contain two disjoint copies of $G^\mathsf{V}$.
Note that by (i) and (iii), every vertex of $\Gamma$ that is contained in more than one graph from $\cS$ has degree 2.
Therefore, since $G^\mathsf{V}$ has no degree 2 vertices, any copy of $G^\mathsf{V}$ in $\Gamma$ is completely contained in some member of $\cS$.
As $G^\mathsf{V}$ is not a proper subgraph of itself by Lemma \ref{lem:G^V_is_subgraph}, any copy of $G^\mathsf{V}$ in $\Gamma$ must even coincides with a member of $\cS$.
Thus $\Gamma$ does not contain two disjoint copies of $G^\mathsf{V}$ by (ii).
\end{proof}

\begin{proof}[Proof of Theorem \ref{thm:counterexample_kappa-EPP}]
By Theorem \ref{thm:not_proper_selfminor}, there is a graph $G$ of size $\kappa$ which is not a proper minor of itself and in particular not a proper subgraph of itself.
Then $G^\mathsf{V}$ does not have the $\kappa$-EPP by Theorem \ref{thm:graph_without_kappa-EPP}.
\end{proof}

\section{Trees without the $\kappa$-EPP}\label{sec:trees_without_kappa-EPP}

Using a similar strategy as in the proof of Theorem \ref{thm:counterexample_kappa-EPP} given in the previous two sections, we show:

\counterexamplekappaEPPtree*

The set-theoretic assumption we need is that there exist no regular limit cardinals.
This assumption is used in Theorem \ref{thm:antichain_of_trees}, in which we construct a $2^\kappa$-sized subgraph-antichain of $\kappa$-sized trees.
Using the graphs from this antichain as building blocks, we construct a tree $T$ of size $\kappa$ which is not a proper subgraph of itself in Theorem \ref{thm:not_proper_self-subtree}.
Finally, we deduce that $T^\mathsf{V}$ does not have the $\kappa$-EPP.
We do not know if any of these theorems also hold in ZFC (see Problems \ref{prob:tree_without_kappa-EPP}, \ref{prob:not_proper_subtree}, \ref{prob:subtree_antichain} and \ref{prob:same_for_rayless_trees}).

We begin by proving two lemmas that are needed for Theorem \ref{thm:antichain_of_trees}. Our proof of Theorem \ref{thm:antichain_of_trees}, including the two lemmas, is based on ideas from Komjáth's proof of Theorem \ref{thm:minor_antichain} \cite{komjath1994minors}.

\begin{lemma}\label{lem:antichains_of_sets}
Let $S$ be any set of infinite cardinality $\kappa$. Then there is a $2^\kappa$-sized set $\cP$ of $\kappa$-sized subsets of $S$ such that no element of $\cP$ is a subset of another.
\end{lemma}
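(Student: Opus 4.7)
The plan is to construct the antichain explicitly after replacing $S$ with a convenient coding set. Since $|\kappa \times \{0,1\}| = \kappa = |S|$, I would fix a bijection $\phi : \kappa \times \{0,1\} \to S$ and work inside $\kappa \times \{0,1\}$, transferring the final family back to $S$ via $\phi$ at the very end.

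For each function $f : \kappa \to \{0,1\}$, I would define
\[
A_f := \{(\alpha, f(\alpha)) : \alpha < \kappa\} \subseteq \kappa \times \{0,1\}.
\]
Every $A_f$ has cardinality exactly $\kappa$, and the map $f \mapsto A_f$ is clearly injective, so the collection $\{A_f : f \in {}^\kappa 2\}$ has cardinality $2^\kappa$.

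To verify the antichain condition, I would take distinct $f, g \in {}^\kappa 2$ and pick some $\alpha < \kappa$ with $f(\alpha) \neq g(\alpha)$. The only element of $A_g$ with first coordinate $\alpha$ is $(\alpha, g(\alpha))$, so $(\alpha, f(\alpha)) \in A_f \setminus A_g$; hence $A_f \not\subseteq A_g$, and by symmetry $A_g \not\subseteq A_f$. Setting $\cP := \{\phi[A_f] : f \in {}^\kappa 2\}$ then yields the required family of $\kappa$-sized subsets of $S$, no two of which are comparable by inclusion.

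There is no real obstacle here; the argument is entirely elementary once the right encoding is chosen. The only mild subtlety is to pair $\kappa$ with $\{0,1\}$ rather than using, say, subsets of $\kappa$ directly, so that changing the value of $f$ at a single coordinate already prevents containment in either direction.
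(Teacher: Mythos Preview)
Your proof is correct and is essentially the same as the paper's: the paper partitions $S$ into $2$-element sets and takes as $\cP$ all transversals (one element from each pair), which is exactly what your bijection $\phi:\kappa\times\{0,1\}\to S$ and the graphs $A_f$ of functions $f:\kappa\to\{0,1\}$ accomplish in different notation. The only cosmetic difference is that you encode the pairing via $\kappa\times\{0,1\}$ and a bijection, whereas the paper phrases it directly as a partition of $S$.
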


\begin{proof}
Let $P$ be a partition of $S$ into 2-element subsets and note that $|P| = |S|$ since $S$ is infinite.
Then the set $\cP$ of all $\kappa$-sized subsets of $S$ that contain exactly one of the two elements of all sets in $P$ is as required.
\end{proof}

\begin{lemma}\label{lem:kappa->2^kappa}
Let $\kappa$ be any infinite cardinal. If there exists a $\kappa$-sized subgraph-antichain of (rayless) trees of size at most $\kappa$, then there exists a $2^\kappa$-sized subgraph-antichain of (rayless) trees of size $\kappa$.
\end{lemma}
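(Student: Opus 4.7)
The plan is to combine the $\kappa$-sized subgraph antichain of trees with the family of subsets from Lemma \ref{lem:antichains_of_sets} to produce $2^\kappa$ many ``glued'' trees. Let $\{A_\alpha : \alpha < \kappa\}$ denote the given antichain. First, I would apply Lemma \ref{lem:antichains_of_sets} with $S = \kappa$ to obtain a family $\cP$ of $2^\kappa$ many $\kappa$-sized subsets of $\kappa$, none a subset of another. For each $P \in \cP$, I will construct a tree $T_P$ by taking a new root $r_P$ and attaching, via a single edge, a disjoint copy $A_\alpha^P$ of $A_\alpha$ to $r_P$ for every $\alpha \in P$.

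Next, I would verify routinely that $T_P$ is a tree of size exactly $\kappa$, and that $T_P$ is rayless whenever every $A_\alpha$ is rayless: any ray in $T_P$ can visit $r_P$ at most once, so it must eventually be contained in some $A_\alpha^P$, which is impossible.

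The main task will be showing that $\{T_P : P \in \cP\}$ is a subgraph antichain. By the antichain property of $\cP$, it suffices to prove that the existence of a subgraph embedding $f : T_P \to T_Q$ implies $P \subseteq Q$. My plan is to analyse where the images $f(A_\alpha^P)$ can lie in $T_Q$. Since these images are pairwise disjoint connected subgraphs of $T_Q$, at most one of them can contain $r_Q$. For every other $\alpha \in P$, the image $f(A_\alpha^P)$ sits inside a single component of $T_Q - r_Q$, i.e., inside some $A_\beta^Q$; the antichain property of the $A_\gamma$'s will then force $\alpha = \beta \in Q$.

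The hard part will be ruling out the possibility that the image of some $A_{\alpha^*}^P$ contains $r_Q$. My plan here is a cut-vertex argument: if this happened, then for every other $\alpha \in P$ (of which there are at least two, since $|P| = \kappa$), the vertex $f(r_P)$ would have a neighbour in $A_\alpha^Q$, hence neighbours in two distinct components of $T_Q - r_Q$; but only $r_Q$ itself has that property in $T_Q$, which would force $f(r_P) = r_Q$ and contradict injectivity of $f$, since $r_Q$ is already the $f$-image of some vertex from $A_{\alpha^*}^P$. The resulting containment $P \subseteq Q$, together with the antichain property of $\cP$, will give $P = Q$ and complete the construction of the desired $2^\kappa$-sized subgraph antichain of $\kappa$-sized (rayless) trees.
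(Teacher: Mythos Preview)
Your proposal is correct and follows essentially the same construction as the paper: glue the antichain members to a common root, one copy for each index in a set $P$ drawn from the $\subseteq$-antichain of Lemma~\ref{lem:antichains_of_sets}. The verification is also along the same lines, though organised a little differently. The paper splits on whether $f(r_P)=r_Q$; in the case $f(r_P)\in A_\beta^Q$ for some $\beta$, it observes directly that $r_Q$ separates $A_\beta^Q$ from the rest, so \emph{all but at most one} of the trees $A_\alpha^P$ must embed into the single tree $A_\beta^Q$, which already contradicts the antichain property of the $A_\gamma$'s (no need to pin down which $\beta$ receives which $\alpha$). Your route---first matching each non-exceptional $A_\alpha^P$ to $A_\alpha^Q$ and then using the resulting neighbours of $f(r_P)$ in two distinct components to force $f(r_P)=r_Q$---is a valid alternative, just slightly more elaborate.
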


\begin{proof}
Let $\cU$ be a $\kappa$-sized subgraph-antichain of pairwise disjoint trees of size at most $\kappa$ and let $\cS$ be a $2^\kappa$-sized $\subseteq$-antichain of $\kappa$-sized subsets of $\cU$, which exists by Lemma \ref{lem:antichains_of_sets}.
For every $S \in \cS$, let $T_S$ be a tree obtained from $\bigcup S$ by adding a new vertex $v_S$ and joining it to one vertex of each tree in $S$.
We claim that $\cT := \{T_S : S \in \cS \}$ is the desired subgraph-antichain.

Clearly, the trees in $\cT$ have size $\kappa$, and if the trees in $\cU$ are rayless, then the trees in $\cT$ are rayless as well. Finally, let $S \neq S' \in \cS$ and suppose for a contradiction that there is a subgraph embedding $f: T_S \to T_{S'}$.
We cannot have $f(v_S) = v_{S'}$ since then $f$ would have to send the trees from $S$ injectively to trees from $S'$, but $S \not\subseteq S'$ and $\cU$ is a subgraph-antichain.
Hence $f(v_S) \in U$ for some tree $U \in S'$.
But since the vertex $v_{S'}$ separates $U$ from all other elements of $S'$ in $T_{S'}$, it follows that $f$ embeds all but at most one tree from $S$ into $U$, contradicting that $\cU$ is a subgraph-antichain.
\end{proof}

\begin{thm}\label{thm:antichain_of_trees}
It is consistent with ZFC that for every infinite cardinal $\kappa$, there are $2^\kappa$ rayless trees of size $\kappa$ none of them being a subgraph of another.
\end{thm}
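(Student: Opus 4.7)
The plan is to reduce the theorem, via Lemma~\ref{lem:kappa->2^kappa}, to the following claim which I would prove by transfinite induction on $\kappa$: for every infinite cardinal $\kappa$, there is a $\kappa$-sized subgraph-antichain of rayless trees of size at most $\kappa$. The lemma then upgrades this to the desired $2^\kappa$-sized antichain of rayless trees of size $\kappa$. The set-theoretic assumption that there are no regular limit cardinals enters only to guarantee that, at each stage of the induction, $\kappa$ is either a successor cardinal or a singular limit cardinal.

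For the base case $\kappa = \aleph_0$ I would exhibit a concrete countable antichain of finite trees. For each $n \geq 2$, let $T_n$ be the \emph{dumbbell} consisting of two hub vertices of degree $n+2$ joined by a path of length $n$, with $n+1$ pendant leaves attached at each hub. If $T_n \hookrightarrow T_m$ is a subgraph embedding, the only vertices of degree at least $n+2$ in $T_m$ are its two hubs, so $T_n$'s hubs must map to $T_m$'s hubs; since a subgraph embedding sends paths to paths of the same length and the unique path between the hubs in the tree $T_m$ has length $m$, we must have $n = m$. Hence $\{T_n : n \geq 2\}$ is a countable antichain of finite, and thus rayless, trees.

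For the inductive step I split on the structure of uncountable $\kappa$. If $\kappa = \mu^+$, the inductive hypothesis at $\mu$ together with Lemma~\ref{lem:kappa->2^kappa} produces a $2^\mu$-sized antichain of rayless trees of size $\mu$, and since $2^\mu \geq \mu^+ = \kappa$ any $\kappa$-sized subfamily suffices. If $\kappa$ is a limit cardinal, then the assumption forces it to be singular; set $\lambda = \cf(\kappa)$, fix a cofinal sequence $(\kappa_\alpha)_{\alpha < \lambda}$ in $\kappa$ with $\kappa_\alpha > \lambda$, and use the inductive hypothesis to obtain both antichains $\cA_\alpha$ of $\kappa_\alpha$ rayless trees of size $\leq \kappa_\alpha$ and a $\lambda$-sized antichain of rayless ``level markers'' $(M_\alpha)_{\alpha < \lambda}$. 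For each $\alpha$ and $T \in \cA_\alpha$, I would form a tree $U_{\alpha, T}$ by gluing $M_\alpha$ to $T$ at a fixed vertex; this yields $\sum_\alpha \kappa_\alpha = \kappa$ rayless trees of size $\leq \kappa$.

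The main obstacle will be verifying that the family $\{U_{\alpha, T}\}$ really is a subgraph-antichain in the singular case. A cross-level embedding $U_{\alpha, T} \hookrightarrow U_{\beta, T'}$ with $\alpha \neq \beta$ would have to realise $M_\alpha$ inside $T' \cup M_\beta$; that $M_\alpha$ does not embed into $M_\beta$ is immediate from the marker antichain, but that $M_\alpha$ does not embed into $T' \in \cA_\beta$ appears to require a strengthened inductive hypothesis imposing structural incompatibility between markers and building blocks at distinct levels. Making this bookkeeping close on itself across the transfinite induction is, I expect, where the bulk of the technical work lies.
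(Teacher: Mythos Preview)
Your overall architecture matches the paper's: reduce via Lemma~\ref{lem:kappa->2^kappa} to a $\kappa$-sized antichain, induct on $\kappa$, and use the absence of weakly inaccessibles to ensure every uncountable $\kappa$ is either a successor or a singular limit. Your base case (dumbbells instead of the paper's double-ended forks) and your successor step are fine.

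The gap you yourself flag in the singular case is real, and your proposed remedy---strengthening the inductive hypothesis to force ``structural incompatibility between markers and building blocks at distinct levels''---is not how the paper closes it; indeed it is hard to see how such bookkeeping could be maintained through the recursion. The paper's device is cleaner and avoids any cross-level compatibility conditions: instead of gluing $M_\alpha$ directly to $T$, one forms $T^*_{\alpha,\beta}$ by taking a $\kappa$-star with centre $r_{\alpha,\beta}$ and attaching both the marker $T_\alpha$ (of size $\cf(\kappa)$) and the building block $T_{\alpha,\beta}$ (of size $\kappa_\alpha$) to $r_{\alpha,\beta}$ by single edges. The point is that $r_{\alpha,\beta}$ is now the \emph{unique} vertex of degree $\kappa$ in $T^*_{\alpha,\beta}$, since every other vertex lies in a tree of size strictly less than $\kappa$; hence any subgraph embedding $f\colon T^*_{\alpha,\beta}\to T^*_{\alpha',\beta'}$ must send $r_{\alpha,\beta}$ to $r_{\alpha',\beta'}$. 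The components of $T^*_{\alpha,\beta}-r_{\alpha,\beta}$ then map injectively to components of $T^*_{\alpha',\beta'}-r_{\alpha',\beta'}$, and since $\cf(\kappa)<\kappa_0\leq\kappa_\alpha$, the only component on either side of size exceeding $\cf(\kappa)$ is the building block; thus $T_{\alpha,\beta}$ is forced into $T_{\alpha',\beta'}$ and $T_\alpha$ into $T_{\alpha'}$. Now the two antichain hypotheses give $\alpha=\alpha'$ and $\beta=\beta'$ directly. The $\kappa$-star is doing two jobs at once: anchoring the root by degree, and making the trees have size exactly $\kappa$ so that the size hierarchy $\cf(\kappa)<\kappa_\alpha<\kappa$ separates the three kinds of components.
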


\begin{proof}
We assume that there exist no regular limit cardinals, which are also known as weakly inaccessible cardinals and whose existence is not provable in ZFC (see \cite{jech2003book}).
Under this assumption, we prove the theorem by induction on $\kappa$.

To find a $2^{\aleph_0}$-sized antichain of $\aleph_0$-sized rayless trees, apply Lemma \ref{lem:kappa->2^kappa} to the set of double-ended forks shown in Figure \ref{fig:double-ended_forks}.

\begin{figure}[ht]
\begin{tikzpicture}[scale=0.6]

\tikzstyle{v}=[draw,circle,color=black,fill=black,minimum size=2.7pt,inner sep=0pt]

\node[v] (a0) at (-1,-1) {};
\node[v] (a1) at (-1,1) {};
\node[v] (a2) at (0,0) {};
\node[v] (a3) at (1,-1) {};
\node[v] (a4) at (1,1) {};

\draw (a0) to (a2) {};
\draw (a1) to (a2) {};
\draw (a2) to (a3) {};
\draw (a2) to (a4) {};

\node[v] (b0) at (3,-1) {};
\node[v] (b1) at (3,1) {};
\node[v] (b2) at (4,0) {};
\node[v] (b3) at (5,0) {};
\node[v] (b4) at (6,-1) {};
\node[v] (b5) at (6,1) {};

\draw (b0) to (b2) {};
\draw (b1) to (b2) {};
\draw (b2) to (b3) {};
\draw (b3) to (b4) {};
\draw (b3) to (b5) {};

\node[v] (c0) at (8,-1) {};
\node[v] (c1) at (8,1) {};
\node[v] (c2) at (9,0) {};
\node[v] (c3) at (10,0) {};
\node[v] (c4) at (11,0) {};
\node[v] (c5) at (12,-1) {};
\node[v] (c6) at (12,1) {};

\draw (c0) to (c2) {};
\draw (c1) to (c2) {};
\draw (c2) to (c3) {};
\draw (c3) to (c4) {};
\draw (c4) to (c5) {};
\draw (c4) to (c6) {};

\node[v] (d0) at (14,-1) {};
\node[v] (d1) at (14,1) {};
\node[v] (d2) at (15,0) {};
\node[v] (d3) at (16,0) {};
\node[v] (d4) at (17,0) {};
\node[v] (d5) at (18,0) {};
\node[v] (d6) at (19,-1) {};
\node[v] (d7) at (19,1) {};

\draw (d0) to (d2) {};
\draw (d1) to (d2) {};
\draw (d2) to (d3) {};
\draw (d3) to (d4) {};
\draw (d4) to (d5) {};
\draw (d5) to (d6) {};
\draw (d5) to (d7) {};

\node at (21,0) {$\dots$};

\end{tikzpicture}
\caption{A countable subgraph-antichain of finite trees.}
\label{fig:double-ended_forks}
\end{figure}
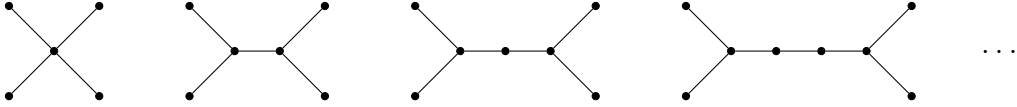

Now let $\kappa > \aleph_0$ and suppose that a $2^\kappa$-sized antichain $\cT$ of $\kappa$-sized rayless trees is given.
Since $|\cT| = 2^\kappa \geq \kappa^+$, there exists a $2^{(\kappa^+)}$-sized antichain of $\kappa^+$-sized rayless trees by Lemma \ref{lem:kappa->2^kappa}.

Finally, suppose that $\kappa$ is a limit cardinal and that the theorem holds for all cardinals less than $\kappa$.
By the assumption that there are no regular limit cardinals, $\kappa$ is singular.
Consider an increasing cofinal sequence of cardinals $(\kappa_\alpha : \alpha < \cf(\kappa))$ in $\kappa$ with $\cf(\kappa) < \kappa_0$.
Let $\{ T_\alpha : \alpha < \cf(\kappa) \}$ be a subgraph-antichain of $\cf(\kappa)$-sized rayless trees, which exists by the induction hypothesis.
Additionally, consider for every $\alpha < \cf(\kappa)$ a subgraph-antichain $\{ T_{\alpha, \beta} : \beta < \kappa_\alpha \}$ of $\kappa_\alpha$-sized rayless trees, which exists again by the induction hypothesis.

For all $\alpha < \cf(\kappa)$ and $\beta < \kappa_\alpha$, let $T^*_{\alpha, \beta}$ be a tree obtained from the disjoint union of a $\kappa$-star with root $r_{\alpha, \beta}$ and $T_{\alpha, \beta}$ and $T_\alpha$ by adding an edge between $r_{\alpha, \beta}$ and an arbitrary vertex of $T_{\alpha, \beta}$ and an edge between $r_{\alpha, \beta}$ and an arbitrary vertex of $T_\alpha$ (see Figure \ref{fig:T*}).
We show that $\{ T^*_{\alpha, \beta} : \alpha < \cf(\kappa), \beta < \kappa_\alpha \}$ is a subgraph-antichain.
Since this antichain has size $\kappa$ and consists of rayless trees of size $\kappa$, the assertion of the theorem then follows from Lemma \ref{lem:kappa->2^kappa}.

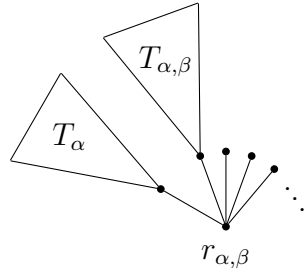
\begin{figure}[ht]
\center
\begin{tikzpicture}[scale=1]

\tikzstyle{v}=[draw,circle,color=black,fill=black,minimum size=2.5pt,inner sep=0pt]
\tikzstyle{u}=[draw,circle,color=black,fill=black,minimum size=0pt,inner sep=0pt]

\node[v] (r) at (0,0) {};
\node[v] (v1) at (150:1) {};
\node[v] (v2) at (110:1) {};
\node[v] (v3) at (90:1) {};
\node[v] (v4) at (70:1) {};
\node[v] (v5) at (50:1) {};

\node[u] (u1) at (163:3) {};
\node[u] (u2) at (137:3) {};
\node[u] (u3) at (123:3) {};
\node[u] (u4) at (97:3) {};

\draw (r) to (v1) {};
\draw (r) to (v2) {};
\draw (r) to (v3) {};
\draw (r) to (v4) {};
\draw (r) to (v5) {};

\draw (v1) to (u1) {};
\draw (v1) to (u2) {};
\draw (u1) to (u2) {};
\draw (v2) to (u3) {};
\draw (v2) to (u4) {};
\draw (u3) to (u4) {};

\node[rotate = 125] at (0.9,0.4) {\small{$\dots$}};
\node at (-2.08,1.2) {$T_\alpha$};
\node at (-0.8,2.2) {$T_{\alpha, \beta}$};
\node at (0,-0.4) {$r_{\alpha, \beta}$};

\end{tikzpicture}
\caption{The tree $T^*_{\alpha, \beta}$ from the proof of Theorem \ref{thm:antichain_of_trees}.}
\label{fig:T*}
\end{figure}

Indeed, consider any $\alpha, \alpha' < \cf(\kappa)$ and $\beta < \kappa_\alpha$ and $\beta' < \kappa_{\alpha'}$ with $(\alpha, \beta) \neq (\alpha', \beta')$ and suppose for a contradiction that there is a subgraph embedding $f : T^*_{\alpha, \beta} \to T^*_{\alpha', \beta'}$.
Note that $f$ sends $r_{\alpha, \beta}$ to $r_{\alpha', \beta'}$, since these are the unique vertices of degree $\kappa$.
Since $T_{\alpha, \beta}$ and $T_{\alpha', \beta'}$ are the only components of $T^*_{\alpha, \beta} - r_{\alpha, \beta}$ and $T^*_{\alpha', \beta'} - r_{\alpha', \beta'}$ of size greater than $\cf(\kappa)$, the map $f$ sends $T_{\alpha, \beta}$ into $T_{\alpha', \beta'}$, and consequently also $T_\alpha$ into $T_{\alpha'}$.
However, if $\alpha \neq \alpha'$, then $T_\alpha$ is not a subgraph of $T_{\alpha'}$, and if $\alpha = \alpha'$ but $\beta \neq \beta'$, then $T_{\alpha, \beta}$ is not a subgraph of $T_{\alpha', \beta'}$, a contradiction.
\end{proof}

Using a similar construction as in the proof of Theorem \ref{thm:not_proper_selfminor}, we prove:

\notproperselfsubtree*

\begin{proof}
We assume that there exists a $\kappa$-sized subgraph-antichain $\cU'$ of $\kappa$-sized trees, which is consistent with ZFC by Theorem \ref{thm:antichain_of_trees}.
Then also $\cU := \{ U^\mathsf{V} : U \in \cU' \}$ is an antichain by Lemma~\ref{lem:G^V_is_subgraph}.
Without loss of generality the elements of $\cU$ are pairwise disjoint.
Note that
\begin{enumerate}[label=(\arabic*)]
\item\label{item:no-adjacent_degree_leq_2_vertices} no tree in $\cU$ contains two adjacent vertices of degree at most 2.
\end{enumerate}

Let $T$ be a $\kappa$-regular tree and fix a bijection $f: V(T) \to \cU$.
Moreover, fix for every $t \in V(T)$ a bijection $g_t : N_T(t) \to L(f(t))$, where $L(f(t))$ denotes the ($\kappa$-sized) set of leaves of the tree $f(t)$.
We construct a tree $U^*$ from $\bigcup \cU$ by adding a $g_t(u)$--$g_u(t)$ edge for every edge $tu \in E(T)$.
Thus for every $U \in \cU$ and every leaf $\l$ of $U$, there is exactly one edge in the tree $U^*$ connecting $\l$ to some leaf of another tree from $\cU$.

We call the edges of $E(U^*) \setminus E(\bigcup \cU)$ \emph{new edges} of $U^*$.
Since new edges only connect leaves of trees from $\cU$,
\begin{enumerate}[resume, label=(\arabic*)]
\item\label{item:endvertices_have_degree_2} all endvertices of new edges have degree 2 in $U^*$.
\end{enumerate}
Furthermore, since every vertex of $U^*$ is either a leaf of some $U \in \cU$ or separates two leaves of some $U \in \cU$,
\begin{enumerate}[resume, label=(\arabic*)]
\item\label{item:all_vertices_separate} every vertex of $U^*$ separates two trees from $\cU$ in $U^*$.
\end{enumerate}

Let $h$ be any subgraph embedding of $U^*$ into $U^*$ and consider any $U \in \cU$.
By \ref{item:no-adjacent_degree_leq_2_vertices} and \ref{item:endvertices_have_degree_2}, the tree $h(U)$ does not contain any new edges of $U^*$.
Hence there is $U' \in \cU$ such that $h(U)$ is completely contained in $U'$.
Since $\cU$ is an antichain, we have $U' = U$.
In particular, the image of $h$ intersects $U \in \cU$, and by the arbitrary choice of $U$ the same is true for all elements of $\cU$.
Thus by \ref{item:all_vertices_separate}, every vertex of $U^*$ must be contained in the image of $h$, which shows that $h$ is not a proper subgraph embedding.
\end{proof}

\begin{proof}[Proof of Theorem \ref{thm:counterexample_kappa-EPP_tree}]
Combine Theorems \ref{thm:not_proper_self-subtree} and \ref{thm:graph_without_kappa-EPP}.
\end{proof}

\section{Proof of the main theorems}\label{sec:proof_of_main_thm}

In this section we prove Theorems \ref{thm:intro_EPP}, \ref{thm:intro_aleph_0-EPP}, and \ref{thm:intro_kappa-EPP}.
Some of the definitions and lemmas that we need can be found in a similar or identical form in \cite{BEEGHPT22}, namely Definitions and Lemmas \ref{def:kappa-embeddable}--\ref{def:hordes}, Definition \ref{def:U-representation}, Lemma \ref{lem:U-representation}, and Lemma \ref{lem:extending_hordes}.
On the other hand, some of the more involved auxiliary results in this section are new. Lemma \ref{lem:regular_hordes} is a strengthening of \cite{BEEGHPT22}*{Lemma 7.3} and its proof requires new ideas, and Lemma \ref{lem:uncountable_hordes} is new.

\subsection{$\kappa$-closure}

In this subsection, we prove Lemma \ref{lem:kappa_closure} and give the definitions necessary to formulate it.
Let $G$ be a graph with a tree-decomposition $(T, \cV)$.

\begin{defn}[$G(U), \protect\overcirc{G}(U)$]
Given a subtree $U$ of $T$, we write $G(U)$ for the subgraph of $G$ induced by $\bigcup \{ V_u : u \in V(U)\}$.
If the unique $T$-minimal node of $U$ is a successor of a node $t$ of $T$, then we write $\overcirc{G}(U)$ for the subgraph of $G$ induced by $\bigcup \{ V_u : u \in V(U)\} \setminus V_t$.
If $U$ consists of a single vertex $u$, we also write $G(u)$ instead of $G(U)$.
\end{defn}

\begin{defn}[hinged]\label{def:hinged}
Let $t \in V(T)$ and $s, s' \in \successor_T(t)$.
We call a subgraph embedding $f: \overcirc{G}(\br_T(s)) \to \overcirc{G}(\br_T(s'))$ \emph{hinged (at $t$)} if any vertex $v$ of $\overset{\circ}{G}(\br_T(s))$ has the same neighbourhood in $G(t)$ as $f(v)$.
\end{defn}

\begin{defn}[$\kappa$-embeddable]\label{def:kappa-embeddable}
Let $(Q,\leq)$ be quasi-ordered. We say that an element $q \in Q$ is \emph{$\kappa$-embeddable} in $Q$ with respect to $\leq$ if there exist $\kappa$ distinct $q' \in Q$ with $q \leq q'$.
\end{defn}

\begin{lemma}[\cite{BEEGHPT22}]\label{lem:kappa_embeddable}
For any well-quasi-ordered set $(Q,\leq)$ and any infinite cardinal $\kappa$, the number of elements of $Q$ which are not $\kappa$-embeddable in $Q$ with respect to $\leq$ is less than $\kappa$.
\end{lemma}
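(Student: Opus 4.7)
The plan is to prove the contrapositive by assuming the set $B := \{q \in Q : q \text{ is not } \kappa\text{-embeddable in } Q\}$ has size at least $\kappa$ and deriving a contradiction. The key structural property I intend to exploit is that in a well-quasi-ordered set, every nonempty subset has only finitely many minimal elements: the minimal elements of $B$ form an antichain in $(Q, \leq)$, and a wqo admits no infinite antichain.

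First I would let $M$ denote the set of minimal elements of $B$ with respect to $\leq$. Since $M$ is an antichain in the wqo $(Q, \leq)$, it is finite. Next I would verify that every element of $B$ lies above some element of $M$; this uses that wqo implies well-foundedness, so there are no infinite strictly descending chains in $B$, and therefore each $q \in B$ can be traced down to a minimal element of $B$. Hence
\[
B \;\subseteq\; \bigcup_{m \in M} \{q' \in Q : m \leq q'\}.
\]

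Now for each $m \in M \subseteq B$, the set $\{q' \in Q : m \leq q'\}$ has size less than $\kappa$ by the definition of $B$ (that is, by the failure of $\kappa$-embeddability of $m$). Since $M$ is finite and $\kappa$ is infinite, a finite union of sets of size less than $\kappa$ still has size less than $\kappa$. Therefore $|B| < \kappa$, contradicting our assumption.

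I do not foresee a genuine obstacle here: the only subtlety is making sure both ingredients of wqo (no infinite antichain and well-foundedness) are invoked correctly, the former to guarantee $|M| < \aleph_0$ and the latter to guarantee that $M$ is cofinal below $B$. Once these are in place, the cardinal arithmetic is immediate.
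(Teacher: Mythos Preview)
Your argument is correct. The paper does not supply its own proof of this lemma but simply cites it from \cite{BEEGHPT22}, so there is no in-paper argument to compare against; the approach you describe is the standard one.

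One small technical point worth tightening: since $(Q,\leq)$ is only assumed to be a \emph{quasi}-order, the set $M$ of minimal elements of $B$ need not literally be an antichain---two minimal elements may be equivalent (each $\leq$ the other) without being equal. The fix is immediate: choose one representative from each $\leq$-equivalence class in $M$; these representatives form a genuine antichain, hence a finite set, and every element of $B$ still lies above one of them. With that adjustment the rest of your proof goes through verbatim.
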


\begin{defn}[$\kappa$-closed, $\kappa$-closure]\label{def:kappa-closure}
Let $U$ be a subtree of $T$ and $U'$ a subtree of $U$.
We say that $U'$ is \emph{$\kappa$-closed} in $U$ if for all $t \in U'$ and all $s \in \successor_U(t) \setminus V(U')$, the graph $\overcirc{G}(\br_T(s))$ is $\kappa$-embeddable in $\{ \overcirc{G}(\br_T(s')) : s' \in \successor_T(t)\}$ with respect to subgraph embeddings hinged at $t$.
Note that if $U'$ is $\kappa$-closed in $U$, then $U'$ is also $\kappa$-closed in $U''$ for any tree $U''$ with $U' \subseteq U'' \subseteq U$.

The \emph{$\kappa$-closure} of $U'$ in $U$ is the smallest subtree of $U$ that is $\kappa$-closed in $U$ and contains $U'$.
Such a subtree always exists since the set of all $\kappa$-closed subtrees of $U$ with $U'$ as a subtree contains $U$ and is closed under arbitrary intersections.
See Figure \ref{fig:example_kappa-closure} for an example.
\end{defn}

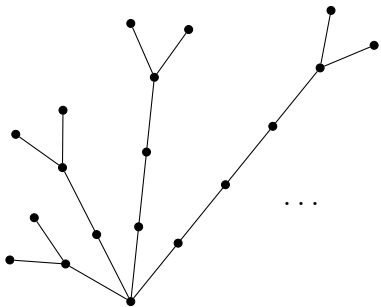
\begin{figure}[ht]
\center
\begin{tikzpicture}
\tikzstyle{v}=[draw,circle,color=black,fill=black,minimum size=3pt,inner sep=0pt]
\tikzstyle{u}=[draw,circle,color=white,fill=white,minimum size=2.5pt,inner sep=0pt]
\tikzstyle{r}=[draw,circle,color=red,fill=red,minimum size=2.5pt,inner sep=0pt]

\node[v] (g0) at (0,0) {};

\node[v] (g1) at (150:1) {};
\node[v] (g2) at (117:1) {};
\node[v] (g3) at (84:1) {};
\node[v] (g4) at (51:1) {};

\node[v] (g5) at (117:2) {};
\node[v] (g6) at (84:2) {};
\node[v] (g7) at (51:2) {};

\node[v] (g8) at (84:3) {};
\node[v] (g9) at (51:3) {};

\node[v] (g10) at (51:4) {};

\node[v] (h0) at (139:1.7) {};
\node[v] (h1) at (161:1.7) {};
\node[v] (h2) at (109.5:2.7) {};
\node[v] (h3) at (124.5:2.7) {};
\node[v] (h4) at (78:3.7) {};
\node[v] (h5) at (90:3.7) {};
\node[v] (h6) at (46.5:4.7) {};
\node[v] (h7) at (55.5:4.7) {};

\draw (g0) to (g1);
\draw (g0) to (g5);
\draw (g0) to (g8);
\draw (g0) to (g10);

\draw (g1) to (h0);
\draw (g1) to (h1);
\draw (g5) to (h2);
\draw (g5) to (h3);
\draw (g8) to (h4);
\draw (g8) to (h5);
\draw (g10) to (h6);
\draw (g10) to (h7);

\node at (2.3,1.3){$\dots$};

\end{tikzpicture}
\caption{Consider the tree $S$ from the figure rooted in its unique vertex of infinite degree with tree-decomposition $(S, \cW)$ where $W_s = \{ t \in V(S) : t \leq s \}$ for all $s \in V(S)$. Then the $\aleph_0$-closure of the root of $S$ in $S$ is $S$ itself.
However, when we delete all leaves of $S$, then $\aleph_0$-closure of the root of $S$ in $S$ is just the root of $S$.}
\label{fig:example_kappa-closure}
\end{figure}

\begin{lemma}\label{lem:kappa_closure}
Let $G$ be a graph with a tree-decomposition $(T, \cV)$ into finite parts such that the set of induced subgraphs of $G$ is lwqo.
Let $\kappa$ be an infinite cardinal, let $U$ be a subtree of $T$, and let $U'$ be a subtree of $U$ with $|U'| < \kappa$.
If either
\begin{itemize}
\item $\kappa = \aleph_0$ and $T$ is rayless, or
\item $\kappa$ is regular and uncountable,
\end{itemize}
then also the $\kappa$-closure of $U'$ in $U$ has size less than $\kappa$.
\end{lemma}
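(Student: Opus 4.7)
The plan is to build the $\kappa$-closure of $U'$ in $U$ as an explicit $\omega$-indexed increasing union, rather than via the abstract intersection characterisation. Set $U_0 := U'$, and for each $n < \omega$ let $U_{n+1}$ be the subtree obtained from $U_n$ by attaching, for every $t \in V(U_n)$ and every $s \in \successor_U(t) \setminus V(U_n)$ for which $\overcirc{G}(\br_T(s))$ is \emph{not} $\kappa$-embeddable in $\{\overcirc{G}(\br_T(s')) : s' \in \successor_T(t)\}$ with respect to hinged embeddings at $t$, the vertex $s$ together with the edge $ts$. Put $U_\omega := \bigcup_{n<\omega} U_n$. Every vertex of $U_\omega$ first appears at some stage $n$, and at stage $n{+}1$ we added exactly the successors of $t$ that the $\kappa$-closed condition requires; hence $U_\omega$ is $\kappa$-closed in $U$. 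A straightforward induction shows that any $\kappa$-closed subtree containing $U'$ must contain each $U_n$, so $U_\omega$ is the $\kappa$-closure.

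The local bound per vertex is where the lwqo hypothesis enters. Fix $t \in V(T)$ and label each vertex $v$ of $\overcirc{G}(\br_T(s'))$ by $N_G(v) \cap V_t$. Since $V_t$ is finite, this uses only finitely many labels, and a label-preserving subgraph embedding $\overcirc{G}(\br_T(s)) \to \overcirc{G}(\br_T(s'))$ is precisely a hinged embedding at $t$ in the sense of Definition~\ref{def:hinged}. As each $\overcirc{G}(\br_T(s'))$ is an induced subgraph of $G$, the lwqo hypothesis makes this labelled family well-quasi-ordered; Lemma~\ref{lem:kappa_embeddable} then produces a cardinal $\lambda_t < \kappa$ bounding the number of $s'$ whose branch graph fails to be $\kappa$-embeddable. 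Consequently, at each stage and each $t \in V(U_n)$, fewer than $\kappa$ new successors of $t$ are added.

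From here the two cases reduce to bookkeeping. If $\kappa$ is uncountable and regular, then by induction $|V(U_n)| < \kappa$: the estimate $|V(U_{n+1})| \leq |V(U_n)| + \sum_{t \in V(U_n)} \lambda_t$ is a sum of fewer than $\kappa$ cardinals each below $\kappa$, hence still below $\kappa$ by regularity. Taking the countable union, $|V(U_\omega)| \leq \aleph_0 \cdot \sup_n |V(U_n)| < \kappa$, again using regularity and uncountability. If $\kappa = \aleph_0$, then $\lambda_t$ is finite, so every vertex of $U_\omega$ has only finitely many successors in $U_\omega$ (all added at a single stage) and hence finite degree there. Since $T$ is rayless, the subtree $U_\omega \subseteq T$ is also rayless, and König's Lemma yields that a locally finite rayless tree is finite.

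The one delicate point is matching the hinged-embedding condition to label-preservation: the finiteness of the part $V_t$ is used precisely to keep the alphabet $2^{V_t}$ finite, so that the lwqo hypothesis can be applied. Once this correspondence is pinned down and Lemma~\ref{lem:kappa_embeddable} is invoked, the regular-uncountable counting and the rayless-König arguments go through without further surprises.
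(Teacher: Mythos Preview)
Your proof is correct and follows essentially the same approach as the paper: both use the lwqo hypothesis with labels $N_G(v)\cap V_t$ to get a wqo under hinged embeddings, invoke Lemma~\ref{lem:kappa_embeddable} to bound the non-$\kappa$-embeddable successors at each node, and finish with regularity (uncountable case) or K\"onig's Lemma (case $\kappa=\aleph_0$). The only presentational difference is that you build the closure explicitly as an $\omega$-indexed union $U_\omega=\bigcup_n U_n$ and bound each stage, whereas the paper works directly with the abstractly defined $\kappa$-closure $\overline{U'}$, observes via minimality that every vertex has degree less than $\kappa$, and then appeals to the fact that a graph-theoretic tree on $\kappa$ vertices (for regular uncountable $\kappa$) must contain a vertex of degree $\kappa$; your staged construction is effectively a BFS unrolling of that same degree argument.
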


\begin{proof}
First of all, note that since the parts of $(T, \cV)$ are finite and the set of induced subgraphs of $G$ is lwqo, for every $t \in V(T)$ the set $\cS := \{ \overcirc{G}(\br_T(s)) : s \in \successor_T(t) \}$ is wqo by hinged subgraph embeddings.
Indeed, label the vertices of all graphs from $\cS$ with their neighbourhood in the finite set $V_t$.
Since every label-preserving subgraph embedding of one graph from $\cS$ into another is hinged and since the elements of $\cS$ are lwqo, they are also wqo by hinged subgraph embeddings.

Let $\overline{U'}$ denote the $\kappa$-closure of $U'$ in $U$.
Then every vertex $u$ of $\overline{U'}$ has degree less than $\kappa$.
Indeed, due to its minimality, $\overline{U'}$ contains precisely all successors $s$ of $u$ in $U$ for which $\overcirc{G}(\br_T(s))$ is not $\kappa$-embeddable in $\{\overcirc{G}(\br_T(s')) : s' \in \successor_T(u) \}$ with respect to hinged subgraph embeddings.
Since this set is lwqo, $u$ has less than $\kappa$ children in $\overline{U'}$ by Lemma \ref{lem:kappa_embeddable}.

Finally, we deduce that $|\overline{U'}| < \kappa$.
If $\kappa$ is regular and uncountable, then $|\overline{U'}| < \kappa$ since every tree on $\kappa$ nodes contains a vertex of degree $\kappa$.
Now suppose that $\kappa = \aleph_0$ and that $T$ is rayless.
Then $\overline{U'}$ is a rayless, locally finite graph and hence $|\overline{U'}| < \aleph_0$ by König's infinity lemma (see \cite{diestel2015book}*{Proposition 8.2.1}).
\end{proof}

\subsection{Hordes}

For notational convenience, we fix for this subsection:
\begin{itemize}
\item a graph $G$ such that the set of induced subgraphs of $G$ is lwqo, and
\item a tree-decomposition $(T,\cV)$ of $G$ into finite parts.
\end{itemize}

\begin{defn}[suitable]\label{def:suitable}
Let $U$ be a subtree of $T$ and let $\Gamma$ be any graph.
We call a subgraph embedding $f:G(U) \to \Gamma$ \emph{suitable at a node} $u \in U$ if there is a subgraph embedding $g: G(\br_T(u)) \to \Gamma$ such that $f$ and $g$ coincide on $G(u)$.
We say that $f$ is \emph{suitable} if $f$ is suitable at every $u \in U$.
\end{defn}

\begin{defn}[hordes]\label{def:hordes}
Let $U$ be a subtree of $T$ and let $\Gamma$ be any graph. We call $(f_i)_{i \in I}$ a \emph{$U$-horde} (in $\Gamma$) if:
\begin{itemize}
\item $f_i$ is a subgraph embedding of $G(U)$ into $\Gamma$ for all $i \in I$,
\item the images of $f_i$ for $i \in I$ are pairwise disjoint, and
\item $f_i$ is suitable for all $i \in I$.
\end{itemize}
Note that if $(f_i : i \in I)$ is a $T$-horde in $\Gamma$, then $\Gamma$ contains $|I|$ disjoint copies of $G$.
We say that a $U$-horde $(f_i : i \in I)$ \emph{extends} a $U'$-horde $(f'_i : i \in J)$ if $U'$ is a subtree of $U$ and $J \subseteq I$ and $f_i \upharpoonright G(U') = f'_i$ for all $i \in J$.
\end{defn}

An ordinal-indexed sequence $(A_\alpha : \alpha \leq \gamma)$ of sets (or trees) is called \emph{increasing} if $A_\alpha \subseteq A_\beta$ for all $\alpha < \beta \leq \gamma$, and \emph{continuous} if $A_\beta = \bigcup \{ A_\alpha : \alpha < \beta \}$ for all limit ordinals $\beta \leq \gamma$.

\begin{lemma}[regular hordes]\label{lem:regular_hordes}
Let $\kappa$ be an infinite cardinal such that one of the following holds:
\begin{itemize}
\item $\kappa = \aleph_0$ and $T$ is rayless, or
\item $\kappa$ is regular and uncountable.
\end{itemize}
Let $U$ be a rooted subtree of $T$ with $|U| \leq \kappa$ and let $U'$ be a $\kappa$-closed rooted subtree of $U$ with $|U'| < \kappa$.
Furthermore, let $I$ be any set with $|I| \leq \kappa$ and let $J \subseteq I$ with $|J| < \kappa$.
Let $\Gamma$ be any graph and suppose that one of the following holds:
\begin{enumerate}[label=(\arabic*)]
\item\label{item:Gamma-X} $\Gamma - X$ contains a copy of $G$ for all $X \subseteq V(\Gamma)$ with $|X| < \kappa$, or
\item\label{item:I=J} $I = J$.
\end{enumerate}
Then any $U'$-horde $(f'_i : i \in J)$ in $\Gamma$ can be extended to a $U$-horde $(f_i: i \in I)$ in $\Gamma$.
\end{lemma}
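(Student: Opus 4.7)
The plan is a two-step construction: first reduce to the case $I = J$, then extend the $U'$-horde to a $U$-horde.

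For the reduction (applicable when (1) holds and $I \supsetneq J$): since $(T, \cV)$ has finite parts and $|U'| < \kappa$, we have $|V(G(U'))| < \kappa$. I would enumerate $I \setminus J$ in order type $\leq \kappa$ and, at each stage, apply (1) to find a subgraph embedding $\psi: G \to \Gamma$ avoiding the set of vertices used by the previously constructed horde members; this set has size less than $\kappa$ by regularity of $\kappa$ (or finiteness when $\kappa = \aleph_0$). Then append $\psi \upharpoonright G(U')$ to the horde. It is suitable at every node since $\psi$ itself provides the required extension at every $u \in U'$.

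Now assume $I = J$. The connected components of $U - V(U')$ are exactly the branches $\br_U(s)$ for $s \in S$, where $S$ is the set of immediate successors in $U$ of $U'$-nodes that are not themselves in $U'$; the elements of $S$ are pairwise incomparable in $U$, so the branches are disjoint, and $|S| \leq |U| \leq \kappa$. I would extend each $f'_i$ branch by branch via the following mechanism: for $s \in S$ with parent $u_s \in U'$, use suitability of $f'_i$ at $u_s$ to obtain $g_{i,s}: G(\br_T(u_s)) \to \Gamma$ extending $f'_i \upharpoonright G(u_s)$; by $\kappa$-closure of $U'$ in $U$, there are $\kappa$ siblings $s' \in \successor_T(u_s)$ together with hinged embeddings $\phi_{s'}: \overcirc{G}(\br_T(s)) \to \overcirc{G}(\br_T(s'))$; then set $h_{i,s} := (g_{i,s} \circ \phi_{s'}) \upharpoonright \overcirc{G}(\br_U(s))$ for a well-chosen $s'$. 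The hinged property guarantees that $f'_i \cup h_{i,s}$ is a valid subgraph embedding compatible across the adhesion in $V_{u_s}$, and suitability of the final $f_i$ at every node $v \in \br_U(s)$ follows because $g_{i,s} \circ \phi_{s'}$ itself supplies an extension to $G(\br_T(v))$.

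The crux of the argument is choosing the siblings $s'$ so that all the $h_{i,s}$ have pairwise disjoint images that are also disjoint from the $f'_i$'s. For a fixed $g_{i,s}$, the images $g_{i,s}(\overcirc{G}(\br_T(s')))$ over the $\kappa$ valid choices of $s'$ are pairwise disjoint subsets of $\Gamma$, since $g_{i,s}$ is a subgraph embedding and the graphs $\overcirc{G}(\br_T(s'))$ for distinct children of $u_s$ are pairwise disjoint in $G$. I would enumerate the pairs $(i, s) \in I \times S$ in some order of type $\leq \kappa$ and inductively pick at each step a sibling avoiding the vertex set used so far; each previously used vertex excludes at most one of the $\kappa$ disjoint options, so a valid choice remains provided the forbidden set has size less than $\kappa$. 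The main obstacle will be keeping this forbidden set below $\kappa$ throughout a construction of length up to $\kappa$, which requires careful bookkeeping exploiting the regularity of $\kappa$ (or the finiteness of $T$ in the $\aleph_0$ case), the bound $|V(G(U'))| < \kappa$, and the disjointness invariant of the horde maintained step by step.
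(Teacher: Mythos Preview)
Your two-phase plan has a genuine gap in Step~2: the forbidden vertex set need not stay below $\kappa$, and the tools you list cannot rescue it. A single branch $\br_U(s)$ may itself have $\kappa$ nodes (nothing in the hypotheses bounds the branches of $U$ below $\kappa$), so once you have placed $h_{i,s}$ for even one pair $(i,s)$ whose branch has size $\kappa$, the forbidden set has size $\kappa$ and your ``each forbidden vertex kills at most one option'' argument no longer guarantees a surviving sibling. The same obstruction already hits the transition between your two steps: if $|I| = \kappa$ then after Step~1 the union $\bigcup_{i \in I} f'_i(G(U'))$ has size $\kappa$ (the images are pairwise disjoint), so Step~2 cannot even begin. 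Your parenthetical ``finiteness of $T$ in the $\aleph_0$ case'' is also incorrect: $T$ is merely rayless, and $U$ may be countably infinite.

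The paper's proof avoids both obstructions by interleaving rather than separating the two phases. It enumerates $V(U) \times I$ as $((u_\alpha, i_\alpha) : \alpha < \mu)$ and, for each index $i$, maintains a rooted subtree $U_i^\alpha$ equal to the $\kappa$-closure in $U$ of the nodes already processed for $i$ (together with $U'$ when $i \in J$). Lemma~\ref{lem:kappa_closure} keeps each $|U_i^\alpha| < \kappa$, and at any stage fewer than $\kappa$ indices have $U_i^\alpha \neq \emptyset$; regularity of $\kappa$ then bounds the total forbidden set below $\kappa$ at every step, so one of the $\kappa$ hinged options is always available. The missing idea is thus not merely ``careful bookkeeping'' but a structurally different recursion: grow each $f_i$ one $\kappa$-closure increment at a time while simultaneously introducing new indices, so that no single step ever commits $\kappa$ vertices.
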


We will refer to this lemma as either Lemma \ref{lem:regular_hordes} \ref{item:Gamma-X} or Lemma \ref{lem:regular_hordes} \ref{item:I=J}, depending on which of the two conditions is used.
Note that Lemma \ref{lem:regular_hordes} \ref{item:Gamma-X} implies that if $|T| \leq \kappa$ for $T$ and $\kappa$ as in the lemma, then $G$ has the $\kappa$-EPP.
The more general statement of Lemma \ref{lem:regular_hordes} will be useful in the proofs of later lemmas.

\begin{proof}
Let $\mu := |V(U) \times I| \leq \kappa$ and enumerate $V(U) \times I = \{(u_\alpha, i_\alpha): \alpha < \mu\}$ in such a way that for all $\alpha \leq \mu$ and $i \in I$, the set $\{u_\beta : \beta < \alpha, i_\beta = i\}$ induces a rooted subtree $S_i^\alpha$ of $U$.
Let $U_i^\alpha$ be the $\kappa$-closure of $S_i^\alpha \cup U'$ in $U$ for all $i \in J$ and let $U_i^\alpha$ be the $\kappa$-closure of $S_i^\alpha$ in $U$ for all $i \in I \setminus J$.
Since $|S_i^\alpha| < \kappa$ and $|U'| < \kappa$, we have $|U_i^\alpha| < \kappa$ in both cases by Lemma \ref{lem:kappa_closure}.
Also note that for all $\alpha < \mu$, there are less than $\kappa$ many $i \in I$ for which $U_i^\alpha \neq \emptyset$.

First, we establish that the sequence $(U_i^\alpha : \alpha \leq \mu)$ is continuous for all $i \in I$, i.e.\ the tree ${U'}_i^\alpha := \bigcup \{ U_i^\beta : \beta < \alpha \}$ coincides with $U_i^\alpha$ for all limit ordinals $\alpha \leq \mu$.
We assume that $i \in I \setminus J$; the proof for $i \in J$ is similar.
To see that ${U'}_i^\alpha$ is $\kappa$-closed in $U$, consider nodes $t \in {U'}_i^\alpha$ and $s \in \successor_U(t) \setminus {U'}_i^\alpha$.
Then there is $\beta < \alpha$ such that $t \in U_i^\beta$.
Since $U_i^\beta$ is $\kappa$-closed and $s \in \successor_U(t) \setminus U_i^\beta$, the graph $\overcirc{G}(\br_T(s))$ is $\kappa$-embeddable in $\{ \overcirc{G}(\br_T(s')) : s' \in \successor_T(t)\}$ with respect to subgraph embeddings hinged at $t$.
This shows that also ${U'}_i^\alpha$ is $\kappa$-closed in $U$.
Next, we show that ${U'}_i^\alpha$ is the smallest $\kappa$-closed tree containing $S_i^\alpha$.
If there was a $\kappa$-closed tree $S$ containing $S_i^\alpha$ with ${U'}_i^\alpha \not \subseteq S$, then there would be a $\beta < \alpha$ such that $U_i^\beta \not \subseteq S$.
Since the intersection of $\kappa$-closed trees is again $\kappa$-closed, $S \cap U_i^\beta$ would be a $\kappa$-closed tree containing $S_i^\beta$, contradicting the minimality of $U_i^\beta$.
Thus ${U'}_i^\alpha$ is the smallest $\kappa$-closed tree containing $S_i^\alpha$, which proves that ${U'}_i^\alpha = U_i^\alpha$.

Now we recursively define subgraph embeddings $f_i^\alpha : G(U_i^\alpha) \to \Gamma$ for $i \in I$ and $\alpha \leq \mu$ such that:

\begin{itemize}
\item $f_i^\alpha \upharpoonright G(U_i^\beta) = f_i^\beta$ for all $i \in I$ and $\beta < \alpha \leq \mu$,
\item $f_i^\alpha \upharpoonright G(U') = f'_i$ for all $i \in J$ and $\alpha \leq \mu$,
\item the images of $f_i^\alpha$ and $f_j^\alpha$ are disjoint for all $i \neq j \in I$ and $\alpha \leq \mu$,
\item $f_i^\alpha$ is suitable for all $i \in I$ and $\alpha \leq \mu$.
\end{itemize}
Then $(f_i^\mu : i \in I)$ will be the desired $U$-horde extending $(f'_i : i \in J)$.

We have $S_i^0 = \emptyset$ and thus $U_i^0 = \emptyset$ for all $i \in I \setminus J$, and we have $U_i^0 = U'$ for all $i \in J$ since $U'$ is $\kappa$-closed.
We let $f_i^0$ be the empty map for all $i \in I \setminus J$ and define $f_i^0 := f'_i$ for all $i \in J$.
Now consider any $\alpha \leq \mu$ and suppose that we have already defined $f_i^\beta$ for all $\beta < \alpha$ and $i \in I$.
If $\alpha$ is a limit, then for all $i \in I$, let $f_i^\alpha: G(U_i^\alpha) \to \Gamma$ be the subgraph embedding extending $f_i^\beta$ for all $\beta < \alpha$, which exists and is unique since $U_i^\alpha = \bigcup \{ U_i^\beta : \beta < \alpha \}$. It is straightforward to check that $f_i^\alpha$ satisfies the four properties listed above.

Finally, suppose that $\alpha = \beta + 1$ is a successor and that $f_i^\beta$ has been defined for all $i \in I$.
For all $i \neq \l := i_\beta$, we have $S_i^\alpha = S_i^\beta$ and thus $U_i^\alpha = U_i^\beta$ and we define $f_i^\alpha := f_i^\beta$.
It is left to define $f_\l^\alpha$.

First suppose that $U_\l^\beta = \emptyset$.
Note that $U'$ is non-empty since it contains the root of $T$, so as
$U_\l^\beta = \emptyset$ we have $\l \in I \setminus J$.
In particular we have $I \neq J$ and thus we may assume that \ref{item:Gamma-X} holds.
Consider the set $X := \bigcup \{ f_i^\beta(G(U_i^\beta)) : i \in I \}$.
Since $|U_i^\beta| < \kappa$ for all $i \in I$, since the parts of $(T, \cV)$ are finite, and since there are less than $\kappa$ many $i \in I$ for which $U_i^\beta \neq \emptyset$, it follows from regularity of $\kappa$ that $|X| < \kappa$.
Therefore, by \ref{item:Gamma-X} there is a subgraph embedding $f: G \to \Gamma$ such that $f(G)$ avoids $X$.
We define $f_\l^\alpha := f \upharpoonright G(U_\l^\alpha)$, which clearly satisfies all prerequisites.

Now suppose that $U_\l^\beta \neq \emptyset$.
If $u_\beta \in U_\l^\beta$, then $U_\l^\alpha = U_\l^\beta$ and we define $f_\l^\alpha := f_\l^\beta$.
Otherwise, $u_\beta$ must be an immediate successor of a node $t$ of $S_\l^\beta \subseteq U_\l^\beta$.
Note that by minimality of the $\kappa$-closure, we have $U_\l^\alpha = U_\l^\beta \cup \br_{U_\l^\alpha}(u_\beta)$.
We set $f_\l^\alpha(v) := f_\l^\beta(v)$ for all $v \in G(U_\l^\beta)$, so it remains to define $f_\l^\alpha$ for the vertices of $\overcirc{G}(\br_{U_\l^\alpha}(u_\beta))$.

Since $f_\l^\beta$ is suitable, there is a subgraph embedding $g: G(\br_T(t)) \to \Gamma$ such that $g$ and $f_\l^\beta$ coincide on $G(t)$.
Moreover, since $U_\l^\beta$ is $\kappa$-closed, the graph $\overcirc{G}(\br_T(u_\beta))$ is $\kappa$-embeddable in $\{ \overcirc{G}(\br_T(s)) : s \in \successor_T(t)\}$ with respect to subgraph embeddings hinged at $t$.
This means that there exists a $\kappa$-sized set $B \subseteq \successor_T(t)$ such that for all $b \in B$ there is a hinged subgraph embedding $h_b: \overcirc{G}(\br_T(u_\beta)) \to \overcirc{G}(\br_T(b))$.
We want to find an element $b^* \in B$ such that the image of $g \circ h_{b^*}: \overcirc{G}(\br_T(u_\beta)) \to \Gamma$ is disjoint from the set $\bigcup \{ f_i^\beta(G(U_i^\beta)) : i \in I \}$, which has size less than $\kappa$ as we have seen before.
As the graphs $\overcirc{G}(\br_T(b))$ for $b \in B$ are pairwise disjoint (which follows from the definition of a tree-decomposition) and $|B| = \kappa$, we can indeed find a $b^* \in B$ which is as required.
We define $f_\l^\alpha(v) := g(h_{b^*}(v))$ for all $v \in \overcirc{G}(\br_{U_\l^\alpha}(u_\beta))$, which completes the construction.

To see that $f_\l^\alpha$ is a subgraph embedding, first note that $f_\l^\alpha(\overcirc{G}(\br_{U_\l^\alpha}(u_\beta))) = g(h_{b^*}(\overcirc{G}(\br_{U_\l^\alpha}(u_\beta))))$ is disjoint from $f_\l^\alpha(G(U_\l^\beta))$ because $f_\l^\alpha(G(U_\l^\beta)) = f_\l^\beta(G(U_\l^\beta)) \subseteq \bigcup \{ f_i^\beta(G(U_i^\beta)) : i \in I \}$.
So $f_\l^\alpha \upharpoonright G(U_\l^\beta)$ and $f_\l^\alpha \upharpoonright \overcirc{G}(\br_{U_\l^\alpha}(u_\beta))$ are subgraph embeddings with disjoint images.
Thus it remains to show that for every edge $vw$ of $G$ with $v \in G(U_\l^\beta)$ and $w \in \overcirc{G}(\br_{U_\l^\alpha}(u_\beta))$, there is a $f_\l^\alpha(v)$--$f_\l^\alpha(w)$ edge\footnote{In the proof of Theorem \ref{thm:main_thm_for_top_minors}, we instead need to find $f_\l^\alpha(v)$--$f_\l^\alpha(w)$ paths in $\Gamma$ such that their sets of inner vertices for distinct edges $vw$ are disjoint from each other and from $G(U_\l^\beta)$ and $\overcirc{G}(\br_T(u_\beta))$.
The arguments to achieve this are similar to the arguments used in this proof.
}
in $\Gamma$.
Since $v \in V_t$ by the definition of a tree-decomposition and since $h_{b*}$ is hinged at $t$, there is an edge in $G$ between $v$ and $h_{b^*}(w)$.
Since $g$ is a subgraph embedding, there is also an edge in $\Gamma$ between $g(v) = f_\l^\beta(v) = f_\l^\alpha(v)$ and $g(h_{b^*}(w)) = f_\l^\alpha(w)$.

To see that $f_\l^\alpha$ is suitable, consider any $u \in U_\l^\alpha$.
If $u \in U_\l^\beta$, then $f_\l^\alpha$ is suitable at $u$ since $f_\l^\beta$ is.
Otherwise, $u$ is contained in $\br_{U_\l^\alpha}(u_\beta)$ and suitability at $u$ is witnessed by $g \circ h_{b^*} \upharpoonright G(\br_T(u))$.
\end{proof}

To deal with trees of singular cardinality, we use the following definition and lemma from \cite{BEEGHPT22}:

\begin{defn}[$U$-representation]\label{def:U-representation}
Let $U$ be any tree and write $\eta := \cf(|U|)$. A \emph{$U$-representation} is an increasing continuous sequence $(U_\alpha : \alpha \leq \eta)$ of subtrees of $U$ such that:
\begin{itemize}
\item $U_\eta = U$,
\item $|U_\alpha| < |U|$ for all $\alpha < \eta$, and
\item $U_\alpha$ is $|U_\alpha|^+$-closed in $U$ for all $\alpha < \eta$.
\end{itemize}
We say that the $U$-representation \emph{extends} a subtree $U'$ of $U$ if $U' \subseteq U_0$.
\end{defn}

\begin{lemma}[\cite{BEEGHPT22} Lemma 7.5]\label{lem:U-representation}
For every tree $U$ of singular cardinality and every subtree $U'$ of $U$ with $|U'| < |U|$, there is a $U$-representation extending $U'$.
\end{lemma}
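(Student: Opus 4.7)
My plan is to realise the $U$-representation as the restriction to $U$ of a suitable increasing continuous chain of elementary substructures. Write $\lambda := |U|$ and $\eta := \cf(\lambda)$, and fix a regular cardinal $\theta$ much larger than $\lambda$. By a standard L\"owenheim--Skolem argument (augmented with a closure step to ensure $V(U) \cap M_\alpha$ is downward-closed in $U$), I would build an increasing continuous chain $(M_\alpha : \alpha \leq \eta)$ of elementary submodels of $(H(\theta), \in)$ such that $G, T, \cV, U, U' \in M_0$ with $U' \subseteq M_0$; for each $\alpha < \eta$ the set $M_\alpha \cap \lambda$ is a cardinal $\mu_\alpha$ satisfying $\mu_\alpha \in M_\alpha$, $\mu_\alpha \subseteq M_\alpha$, and $|M_\alpha| = \mu_\alpha$; the sequence $(\mu_\alpha : \alpha < \eta)$ is cofinal in $\lambda$; and $V(U) \subseteq \bigcup_{\alpha \leq \eta} M_\alpha$.

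Set $U_\alpha := U \cap M_\alpha$ for $\alpha \leq \eta$. The downward-closedness condition makes $U_\alpha$ a subtree of $U$; the family $(U_\alpha)$ inherits increasingness and continuity from $(M_\alpha)$; $U_\eta = U$ and $U' \subseteq U_0$ by construction; and $|U_\alpha| \leq \mu_\alpha < \lambda$ for $\alpha < \eta$.

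The main obstacle is to verify that each $U_\alpha$ is $|U_\alpha|^+$-closed in $U$. A purely recursive construction---taking $\mu_\alpha^+$-closures at successor stages and unions at limit stages---stumbles at limits, because a union of a chain of $\kappa_\beta^+$-closed subtrees with $(\kappa_\beta)$ strictly increasing is only $\sup_\beta \kappa_\beta^+$-closed, which is typically one cardinal short of the required $|U_\alpha|^+$. The elementary-chain setup circumvents this. Given $t \in U_\alpha$ and $s \in \successor_U(t) \setminus U_\alpha$, consider $B := \{ \overcirc{G}(\br_T(s')) : s' \in \successor_T(t) \}$. As noted at the start of the proof of Lemma~\ref{lem:kappa_closure}, $B$ is well-quasi-ordered by hinged subgraph embeddings, using the lwqo-ness of induced subgraphs of $G$ and the finiteness of $V_t$. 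Lemma~\ref{lem:kappa_embeddable} applied with $\kappa = \mu_\alpha^+$ then shows that the set $B_{\mathrm{bad}} \subseteq B$ of elements that are not $\mu_\alpha^+$-embeddable in $B$ satisfies $|B_{\mathrm{bad}}| \leq \mu_\alpha$. Since $B_{\mathrm{bad}}$ is definable from $t, T, \cV, G, \mu_\alpha^+$---all lying in $M_\alpha$---we have $B_{\mathrm{bad}} \in M_\alpha$; combined with $\mu_\alpha \subseteq M_\alpha$ and $|B_{\mathrm{bad}}| \leq \mu_\alpha$, elementarity forces $B_{\mathrm{bad}} \subseteq M_\alpha$. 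As $s \notin M_\alpha$, the graph $\overcirc{G}(\br_T(s))$ does not lie in $B_{\mathrm{bad}}$, so it is $\mu_\alpha^+$-embeddable, hence $|U_\alpha|^+$-embeddable since $|U_\alpha| \leq \mu_\alpha$. This yields $|U_\alpha|^+$-closedness of $U_\alpha$ and completes the argument. If one prefers to avoid elementary substructures, the same effect can be produced by a direct transfinite recursion in which, at each successor stage, the $\mu_\alpha^+$-closure is taken with respect to a cardinal sequence $(\mu_\alpha)$ engineered so that $\mu_\alpha > \sup_{\beta<\alpha} \mu_\beta$ at limits; the elementary-chain language just packages this cleanly.
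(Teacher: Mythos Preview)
The paper does not supply its own proof of this lemma; it is quoted from \cite{BEEGHPT22} without argument. So there is no in-paper proof to compare against, and I simply assess your argument on its own terms.

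Your elementary-submodel approach is sound and you correctly isolate the genuine difficulty: a naive recursion (take $\mu_\alpha^+$-closures at successors, unions at limits) yields at a limit stage only $\sup_{\beta<\alpha}\mu_\beta^+$-embeddability for the missing successors, which is generally one cardinal short of the required $|U_\alpha|^+$-embeddability. Reflecting the ``bad'' set into $M_\alpha$ is exactly the right way to bridge this gap.

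There is one small technical slip. You require simultaneously that $M_\alpha \cap \lambda$ \emph{equals} the cardinal $\mu_\alpha$ and that $\mu_\alpha \in M_\alpha$; since $\mu_\alpha < \lambda$, these together force $\mu_\alpha \in M_\alpha \cap \lambda = \mu_\alpha$, which is impossible. The fix is routine: drop the condition $M_\alpha \cap \lambda = \mu_\alpha$ and retain only $\mu_\alpha + 1 \subseteq M_\alpha$ together with $|M_\alpha| = \mu_\alpha$. This is enough for the key step, since $S_{\mathrm{bad}} \in M_\alpha$ (definable from $t,T,\cV,G,\mu_\alpha$) and $|S_{\mathrm{bad}}| \leq \mu_\alpha$ then give $S_{\mathrm{bad}} \subseteq M_\alpha$ via an enumeration of $S_{\mathrm{bad}}$ lying in $M_\alpha$. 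Relatedly, it is cleaner to take $B_{\mathrm{bad}}$ as a set of successor \emph{nodes} $s' \in \successor_T(t)$ rather than of graphs, so that $s \notin M_\alpha$ immediately yields $s \notin B_{\mathrm{bad}}$ without worrying about whether distinct successors could give the same graph object. With these cosmetic adjustments the proof goes through.
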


Let $\kappa$ be a regular uncountable cardinal and $\Gamma$ a graph such that $\Gamma - X$ contains a copy of $G$ for all $X \subseteq \Gamma$ of size less than $\kappa$.
In Lemma \ref{lem:regular_hordes} \ref{item:Gamma-X}, we have seen that for every set $I$ of size at most $\kappa$ and every rooted subtree $U$ of $T$ of size at most $\kappa$, there is a $U$-horde $(f_i : i \in I)$ in $\Gamma$.
In the following lemma, we show that the same is true for all (not necessarily regular) uncountable cardinals $\kappa$.
To make the inductive proof of the lemma work, we prove the following stronger statement:

\begin{lemma}[uncountable hordes]\label{lem:uncountable_hordes}
Let $\mu$ and $\kappa$ be uncountable cardinals such that $\mu$ is regular and $\mu \leq \kappa$.
Let $\Gamma$ be any graph such that $\Gamma - X$ contains a copy of $G$ for all $X \subseteq V(\Gamma)$ with $|X| < \kappa$.
Let $U$ be a rooted subtree of $T$ with $|U| \leq \kappa$ and let $U'$ be a $\mu$-closed rooted subtree of $U$ with $|U'| < \mu$.
Let $I$ be any set with $|I| \leq \kappa$ and let $J$ be a subset of $I$ with $|J| < \mu$.
Then any $U'$-horde $(f'_i : i \in J)$ in $\Gamma$ can be extended to a $U$-horde $(f_i : i \in I)$.
\end{lemma}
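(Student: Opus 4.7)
I would prove the lemma by induction on $\kappa$, allowing $\mu$ to vary. The base case $\kappa = \mu$ is exactly Lemma \ref{lem:regular_hordes} \ref{item:Gamma-X}. For the inductive step $\kappa > \mu$, if $\max(\mu,|U|,|I|) < \kappa$, applying the inductive hypothesis with $\kappa$ replaced by $\max(\mu,|U|,|I|)$ already yields the extension. Otherwise $|U|=\kappa$ or $|I|=\kappa$, and I split according to whether $\kappa$ is regular or singular.

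If $\kappa$ is regular, I would pass to $U''$, the $\kappa$-closure of $U'$ in $U$, which by Lemma \ref{lem:kappa_closure} has $|U''|<\kappa$. First use the inductive hypothesis with $\mu'=\mu$ and $\kappa'=\max(\mu,|U''|,|J|)<\kappa$ to extend the given $U'$-horde indexed by $J$ to a $U''$-horde indexed by $J$. Since $U''$ is now $\kappa$-closed of size less than $\kappa$ and $|J|<\mu<\kappa$, Lemma \ref{lem:regular_hordes} \ref{item:Gamma-X} (applied with $U''$ in the role of $U'$) then extends this to the desired $U$-horde indexed by $I$.

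If $\kappa$ is singular (hence a limit cardinal), I would build the horde along two increasing continuous chains of length $\nu:=\cf(\kappa)$. Fix an increasing cofinal sequence of cardinals $(\lambda_\alpha : \alpha < \nu)$ in $\kappa$ with $\lambda_0 \geq \mu$. Choose a chain of rooted subtrees $(U_\alpha : \alpha \leq \nu)$ of $U$ with $U_0 \supseteq U'$, $U_\nu = U$, each $U_\alpha$ being $|U_\alpha|^+$-closed in $U$ with $|U_\alpha| \leq \lambda_\alpha$ (applying Lemma \ref{lem:U-representation} when $|U|=\kappa$, or just taking $U_\alpha=U$ when $|U|<\kappa$), together with a chain $(I_\alpha : \alpha \leq \nu)$ with $I_0 \supseteq J$, $I_\nu = I$ and $|I_\alpha| \leq \lambda_\alpha$. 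Then build $U_\alpha$-hordes $(f^\alpha_i : i \in I_\alpha)$ recursively: at stage $0$ apply the inductive hypothesis with $\mu'=\mu$ and $\kappa'=\lambda_0$; at successor stages $\alpha+1$ with $\mu'=\max(\mu,|U_\alpha|^+,|I_\alpha|^+)$ and $\kappa'=\max(\mu',\lambda_{\alpha+1})<\kappa$; at limit stages take pointwise unions.

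The main technical hurdle is aligning the closure conditions in the successor stages of the singular case: the inductive hypothesis wants $U_\alpha$ to be $\mu'$-closed with $|U_\alpha|,|I_\alpha|<\mu'$, but the $U$-representation supplies only $|U_\alpha|^+$-closure. Arranging $|U_\alpha| \geq \max(\mu,|I_\alpha|)$ throughout (by choosing $\lambda_0 \geq \mu$, possibly inflating $U_0$ via Lemma \ref{lem:kappa_closure}, and keeping $|I_\alpha| \leq |U_\alpha|$ whenever $|U|=\kappa$) forces $\mu' = |U_\alpha|^+$, matching the closure that is available. When $|U|<\kappa$, the trivial choice $U_\alpha=U$ is $\mu'$-closed in itself for every $\mu'$, so the issue disappears.
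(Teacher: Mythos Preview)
Your proof is correct and follows essentially the same strategy as the paper's: both argue by induction, use Lemma~\ref{lem:regular_hordes}\,\ref{item:Gamma-X} for the regular step after passing to the appropriate closure, and climb a $U$-representation together with an index chain in the singular step. The paper inducts on $\max(|U|,|I|)$ and splits the singular case into two subcases ($|U|<|I|$ and $|U|\ge|I|$), whereas you induct on $\kappa$ (reducing first when $\max(\mu,|U|,|I|)<\kappa$) and merge the two singular subcases by allowing the trivial chain $U_\alpha=U$ when $|U|<\kappa$; both organisations work. One small point: in the singular case with $|U|<\kappa$ you should also require $\lambda_0\ge|U|$ so that $|U_0|=|U|\le\kappa'$ at stage~$0$, and your ``inflating $U_0$ via Lemma~\ref{lem:kappa_closure}'' is more cleanly achieved (as the paper does) by passing to a tail of the $U$-representation to force $|U_0|\ge\mu$.
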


\begin{proof}
We prove the lemma by induction on $\max(|U|, |I|)$ and distinguish between four cases.

\begin{case}
$\max(|U|, |I|) \leq \mu$ (base case).
\end{case}

We can extend $(f'_i : i \in J)$ to a $U$-horde $(f_i : i \in I)$ by Lemma \ref{lem:regular_hordes} \ref{item:Gamma-X} (where we insert $\mu$ for the variable $\kappa$ from Lemma \ref{lem:regular_hordes})
since
\begin{itemize}
\item $\mu$ is regular and uncountable,
\item $U'$ is $\mu$-closed in $U$,
\item $|U'| < \mu$,
\item $|U|, |I| \leq \mu$,
\item $|J| < \mu$, and
\item $\Gamma - X$ contains a copy of $G$ for all $X \subseteq V(\Gamma)$ of size less than $\mu$ (as $\mu \leq \kappa$).
\end{itemize}

\begin{case}
$\max(|U|, |I|) > \mu$ and $\max(|U|, |I|)$ is regular.
\end{case}

Let $U''$ be $\max(|U|, |I|)$-closure of $U'$ in $U$.
Since $|U'| < \mu < \max(|U|, |I|)$ and $\max(|U|, |I|)$ is regular and uncountable, also $|U''| < \max(|U|, |I|)$ by Lemma \ref{lem:kappa_closure}.
Moreover, we have $|J| < \mu < \max(|U|, |I|)$ and thus $\max(|U''|, |J|) < \max(|U|, |I|)$.
Therefore, we can apply the induction hypothesis to extend $(f'_i : i \in J)$ to a $U''$-horde $(f''_i : i \in J)$.
We can further extend $(f''_i : i \in J)$ to a $U$-horde $(f_i : i \in I)$ by Lemma \ref{lem:regular_hordes} \ref{item:Gamma-X} (where we insert $\max(|U|, |I|)$ for the variable $\kappa$ from Lemma \ref{lem:regular_hordes}) since
\begin{itemize}
\item $\max(|U|, |I|)$ is regular and uncountable (as $\max(|U|, |I|) > \mu$),
\item $U''$ is $\max(|U|, |I|)$-closed in $U$,
\item $|U''| < \max(|U|, |I|)$,
\item $|J| < \max(|U|, |I|)$,
\item $|U|, |I| \leq \max(|U|, |I|)$, and
\item $\Gamma - X$ contains a copy of $G$ for all $X \subseteq V(\Gamma)$ of size less than $\max(|U|, |I|)$ (as $\max(|U|, |I|) \leq \kappa$).
\end{itemize}

\begin{case}
$\max(|U|, |I|) > \mu$, $\max(|U|, |I|)$ is singular, and $|U| < |I|$.
\end{case}

Write $\eta := \cf(|I|)$.
Since $|I| = \max(|U|, |I|)$ is singular and therefore a limit cardinal and since $|J| < \mu < \max(|U|, |I|) = |I|$, we can find an increasing continuous sequence $(I_\alpha : \alpha \leq \eta)$ of infinite subsets of $I$ such that:
\begin{itemize}
\item $|U| \leq |I_0|$,
\item $J \subseteq I_0$,
\item $I_\eta = I$, and
\item $|I_\alpha| < |I|$ for all $\alpha < \eta$.
\end{itemize}
We recursively construct $U$-hordes $(f_i : i \in I_\alpha)$ extending $(f'_i : i \in J)$ and each other.
To start the recursion, we extend $(f'_i : i \in J)$ to a $U$-horde $(f_i : i \in I_0)$ by the induction hypothesis, which is possible since $\max(|U|, |I_0|) = |I_0| < |I| = \max(|U|, |I|)$.
For successor steps, let $\alpha < \eta$ and suppose that the $U$-horde $(f_i : i \in I_\alpha)$ has already been defined.
We have $\max(U, I_{\alpha + 1}) = |I_{\alpha + 1}| < |I| = \max(|U|, |I|)$.
Therefore, we can extend $(f_i : i \in I_\alpha)$ to a $U$-horde $(f_i : i \in I_{\alpha + 1})$ by the induction hypothesis applied with $\mu = |I_\alpha|^+$ since
\begin{itemize}
\item $|I_\alpha|^+$ is regular and uncountable (as $I_\alpha$ is infinite),
\item $|I_\alpha|^+ \leq |I| \leq \kappa$,
\item $U$ is $|I_\alpha|^+$-closed in $U$, and
\item $|U| \leq |I_0| \leq |I_\alpha| < |I_\alpha|^+$.
\end{itemize}
If $\alpha \leq \eta$ is a limit, let $(f_i : i \in I_\alpha)$ be the unique $U$-horde that extends $(f_i : i \in I_\beta)$ for all $\beta < \alpha$, which exists by continuity of the sequence $(I_\alpha : \alpha \leq \eta)$.

\begin{case}
$\max(|U|, |I|) > \mu$, $\max(|U|, |I|)$ is singular, and $|U| \geq |I|$.
\end{case}

Write $\eta := \cf(|U|)$.
Since $|U'| < \mu < \max(|U|, |I|) = |U|$, there is a $U$-presentation $(U_\alpha : \alpha \leq \eta)$ extending $U'$ by Lemma \ref{lem:U-representation}.
By considering a subsequence of the $U$-representation, we assume without loss of generality that $U_0$ is infinite.
Since $|J| < \mu < \max(|U|, |I|) = |U|$, we may also assume that $|J| \leq |U_0|$.
Since $|I| \leq |U|$, we can find a (not necessarily strictly) increasing continuous sequence $(I_\alpha : \alpha \leq \eta)$ of subsets of $I$ such that
\begin{itemize}
\item $I_0 = J$,
\item $I_\eta = I$, and
\item $|I_\alpha| \leq |U_\alpha|$ for all $\alpha \leq \eta$.
\end{itemize}
We recursively construct $U_\alpha$-hordes $(f_i^\alpha : i \in I_\alpha)$ extending $(f'_i : i \in J)$ and each other.
To start the recursion, we extend $(f'_i : i \in J)$ to a $U_0$-horde $(f_i^0 : i \in I_0)$ by the induction hypothesis, which is possible since $\max(|U_0|, |I_0|) = |U_0| < |U| = \max(|U|, |I|)$.
For successor steps, let $\alpha < \eta$ and suppose that the $U_\alpha$-horde $(f_i^\alpha : i \in I_\alpha)$ has already been defined.
We have $\max(|U_{\alpha + 1}|, |I_{\alpha + 1}|) = |U_{\alpha + 1}| < |U| = \max(|U|, |I|)$.
Therefore, we can extend $(f_i^\alpha : i \in I_\alpha)$ to a $U_{\alpha + 1}$-horde $(f_i^{\alpha + 1} : i \in I_{\alpha + 1})$ by the induction hypothesis applied with $\mu = |U_\alpha|^+$ since
\begin{itemize}
\item $|U_\alpha|^+$ is regular and uncountable (as $|U_\alpha|$ is infinite),
\item $|U_\alpha|^+ \leq |U| \leq \kappa$,
\item $|I_{\alpha + 1}| \leq |U_{\alpha + 1}| \leq |U| \leq \kappa$,
\item $U_\alpha$ is $|U_\alpha|^+$-closed in $U$ and thus in $U_{\alpha + 1}$ by the definition of a $U$-representation,
\item $|U_\alpha| < |U_\alpha|^+$, and
\item $|I_\alpha| \leq |U_\alpha| < |U_\alpha|^+$.
\end{itemize}
If $\alpha \leq \eta$ is a limit, then let $(f_i^\alpha: i \in I_\alpha)$ be the unique $U_\alpha$-horde extending $(f_i^\beta : i \in I_\beta)$ for all $\beta < \alpha$, which exists by continuity of $(I_\alpha : \alpha \leq \eta)$ and $(U_\alpha : \alpha \leq \eta)$.
\end{proof}

Our final lemma allows us to extend certain $U$-hordes, which we will find using Lemma \ref{lem:regular_hordes} or Lemma \ref{lem:uncountable_hordes}, to $T$-hordes:

\begin{lemma}[for extending hordes]\label{lem:extending_hordes}
Let $\mu$ be an infinite cardinal such that one of the following holds:
\begin{itemize}
\item $\mu = \aleph_0$ and $T$ is rayless, or
\item $\mu$ is regular and uncountable.
\end{itemize}
Let $U$ be a rooted subtree of $T$, and $U'$ a $\mu$-closed rooted subtree of $U$ of size less than $\mu$.
Then any $U'$-horde $(f'_i : i \in I)$ with $|I| < \mu$ in a graph $\Gamma$ can be extended to a $U$-horde $(f_i : i \in I)$ in $\Gamma$.
\end{lemma}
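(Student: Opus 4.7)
My plan is to prove Lemma \ref{lem:extending_hordes} by transfinite induction on $|U|$. Lemma \ref{lem:regular_hordes} \ref{item:I=J} handles the base case where $|U| \leq \mu$, and for $|U| > \mu$ I reduce to smaller instances of Lemma \ref{lem:extending_hordes} by a $\kappa$-closure argument (for regular $|U|$) or a $U$-representation (for singular $|U|$).

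If $|U| \leq \mu$, Lemma \ref{lem:regular_hordes} \ref{item:I=J} with $\kappa := \mu$ and $J := I$ directly yields the desired $U$-horde; the hypotheses $|U'| < \mu$, $|I| < \mu$, and $U'$ being $\mu$-closed in $U$ hold by assumption. If $|U| > \mu$ is regular then, since $|U| > \mu \geq \aleph_0$, the cardinal $|U|$ is regular uncountable. Let $U''$ be the $|U|$-closure of $U'$ in $U$, so that Lemma \ref{lem:kappa_closure} gives $|U''| < |U|$. The induction hypothesis (with $U$ replaced by $U''$ and $\mu$ unchanged) extends $(f'_i : i \in I)$ to a $U''$-horde, and a further application of Lemma \ref{lem:regular_hordes} \ref{item:I=J} with $\kappa := |U|$ extends this to a $U$-horde.

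The interesting case is $|U| > \mu$ with $|U|$ singular, where Lemma \ref{lem:kappa_closure} fails to deliver a small $|U|$-closure of $U'$. My first step is to enlarge $U'$: fix any rooted subtree $V$ of $U$ with $U' \subseteq V$ and $|V| = \mu$, and let $U^*$ be the $\mu^+$-closure of $V$ in $U$. Since $\mu^+$ is regular uncountable and $|V| = \mu < \mu^+$, Lemma \ref{lem:kappa_closure} gives $|U^*| \leq \mu$, so in fact $|U^*| = \mu$; moreover $U^*$ is $\mu^+$-closed, and hence $\mu$-closed, in $U$. The induction hypothesis then extends $(f'_i : i \in I)$ to a $U^*$-horde. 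Now Lemma \ref{lem:U-representation}, applied to $U$ and $U^*$, supplies a $U$-representation $(U_\alpha : \alpha \leq \cf(|U|))$ with $U^* \subseteq U_0$, and in particular $|U_\alpha| \geq \mu$ for every $\alpha$. I extend step by step along the representation: the $U^*$-horde is extended to a $U_0$-horde by the induction hypothesis with $\mu$ replaced by $\mu^+$ (legitimate, since $U^*$ is $\mu^+$-closed with $|U^*| = \mu < \mu^+$); each successor step $\alpha \to \alpha + 1$ uses the induction hypothesis with $\mu$ replaced by $|U_\alpha|^+$, relying on $U_\alpha$ being $|U_\alpha|^+$-closed in $U$ (hence in $U_{\alpha+1}$), on $|I| < \mu \leq |U_\alpha| < |U_\alpha|^+$, and on $|U_\alpha|^+$ being regular uncountable since $|U_\alpha| \geq \aleph_0$; at each limit $\alpha$ the pointwise union of the previously constructed hordes is a $U_\alpha$-horde by continuity of the $U$-representation.

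The main obstacle is bookkeeping: every invocation of the induction hypothesis must meet the size condition $|U'| < \mu$ for whatever value of $\mu$ it is being applied with. The passage from $\mu$ to $\mu^+$, and then to the successively larger $|U_\alpha|^+$, is designed exactly to keep this bound valid while simultaneously exploiting the $|U_\alpha|^+$-closure supplied by the $U$-representation.
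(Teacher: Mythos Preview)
Your proof is correct and follows the same induction on $|U|$ with the same three-case split (base $|U|\le\mu$, regular $|U|>\mu$, singular $|U|>\mu$) as the paper, invoking Lemma~\ref{lem:regular_hordes}~\ref{item:I=J} and Lemma~\ref{lem:U-representation} in the same places. The only difference is your detour through the intermediate tree $U^*$ in the singular case: the paper instead takes a $U$-representation extending $U'$ directly, assumes without loss of generality that $\mu\le|U_0|$ (by dropping an initial segment), and extends $(f'_i:i\in I)$ to a $U_0$-horde by the induction hypothesis with the \emph{original} $\mu$; your extra step is harmless but unnecessary.
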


\begin{proof}
By induction on $|U|$.

\setcounter{case}{0}
\begin{case}
$|U| \leq \mu$ (base case).
\end{case}

Then the assertion follows from Lemma \ref{lem:regular_hordes} \ref{item:I=J}.

\begin{case}
$|U| > \mu$ and $|U|$ is regular.
\end{case}

Then $|U'| < \mu < |U|$, so also the $|U|$-closure $U''$ of $U'$ in $U$ has size less than $|U|$ by Lemma~\ref{lem:kappa_closure}.
Hence by the induction hypothesis, we can extend $(f'_i : i \in I)$ to a $U''$-horde $(f''_i : i \in I)$ in $\Gamma$.
We can further extend $(f''_i : i \in I)$ to a $U$-horde $(f_i : i \in I)$ in $\Gamma$  by Lemma \ref{lem:regular_hordes} \ref{item:I=J} since
\begin{itemize}
\item $|U|$ is regular and uncountable,
\item $U''$ is $|U|$-closed in $U$,
\item $|U''| < |U|$, and
\item $|J| \leq |I| < \mu < |U|$.
\end{itemize}

\begin{case}
$|U| > \mu$ and $|U|$ is singular.
\end{case}

Write $\eta := \cf(|U|)$.
Since $|U'| < \mu < |U|$, there is a $U$-representation $(U_\alpha : \alpha \leq \eta)$ extending $U'$ by Lemma \ref{lem:U-representation}.
Without loss of generality, suppose that $\mu \leq |U_0|$.
We recursively construct $U_\alpha$-hordes $(f_i^\alpha : i \in I)$ for $\alpha < \eta$ extending $(f'_i : i \in I)$ and each other.
To start the recursion, we extend $(f'_i : i \in I)$ to a $U_0$-horde $(f_i^0 : i \in I)$ by the induction hypothesis.
For successor steps, let $\alpha < \eta$ and extend the $U_\alpha$-horde $(f_i^\alpha : i \in I)$ to a $U_{\alpha + 1}$-horde $(f_i^{\alpha + 1} : i \in I)$ by the induction hypothesis applied with $\mu = |U_\alpha|^+$. This is possible because
\begin{itemize}
\item $|U_\alpha|^+$ is regular and uncountable,
\item $U_\alpha$ is $|U_\alpha|^+$-closed in $U$ and thus in $U_{\alpha + 1}$ by the definition of a $U$-representation,
\item $|U_\alpha| < |U_\alpha|^+$, and
\item $|I| < \mu \leq |U_0| < |U_\alpha|^+$.
\end{itemize}
If $\alpha$ is a limit, then for all $i \in I$ let $f_i^\alpha : G(U_\alpha) \to \Gamma$ be the unique function extending $f_i^\beta$ for all $\beta < \alpha$, which completes the construction.
\end{proof}

\subsection{Deducing the main theorems and their corollaries}

\introkappaEPP*

\begin{proof}
Let $(T, \cV)$ be a tree-decomposition of $G$ into finite parts, let $\kappa$ be any uncountable cardinal, and let $\Gamma$ be a graph such that $\Gamma - X$ contains a copy of $G$ for all $X \subseteq V(\Gamma)$ of size less than $\kappa$.
Moreover, fix any set $I$ of size $\kappa$.
We show that $\Gamma$ contains a $T$-horde $(f_i : i \in I)$; then the graphs $f_i(G)$ for $i \in I$ are $\kappa$ disjoint copies of $G$ in $\Gamma$ and the proof is complete.

Let $U$ denote the $\kappa^+$-closure in $T$ of the root of $T$ and note that $|U| < \kappa^+$ by Lemma \ref{lem:kappa_closure}.
By Lemma \ref{lem:uncountable_hordes} (with $J = \emptyset$) there is a $U$-horde $(f'_i : i \in I)$ in $\Gamma$.
By Lemma \ref{lem:extending_hordes} (with $\mu = \kappa^+$) this horde can be extended to a $T$-horde $(f_i : i \in I)$ in $\Gamma$.
\end{proof}

\introalephnaughtEPP*

\begin{proof}
Let $\Gamma$ be a graph such that $\Gamma - X$ contains a copy of $G$ for all $X \subseteq V(\Gamma)$ of size less than $\aleph_0$.
Moreover, fix any set $I$ of size $\aleph_0$.
Since $G$ is rayless, by Lemma \ref{lem:rayless_tree-decomposition} there is a tree-decomposition $(T, \cV)$ of $G$ into finite parts such that $T$ is rayless.
We show that $\Gamma$ contains a $T$-horde $(f_i : i \in I)$; then the graphs $f_i(G)$ for $i \in I$ are $\aleph_0$ disjoint copies of $G$ in $\Gamma$ and the proof is complete.

Let $U$ denote the $\aleph_1$-closure in $T$ of the root of $T$ and note that $|U| < \aleph_1$ by Lemma \ref{lem:kappa_closure}.
By Lemma \ref{lem:regular_hordes} \ref{item:Gamma-X} (with $\kappa = \aleph_0$ and $J = \emptyset$) there is a $U$-horde $(f'_i : i \in I)$ in $\Gamma$.
By Lemma \ref{lem:extending_hordes} (with $\mu = \aleph_1$) this horde can be extended to a $T$-horde $(f_i : i \in I)$ in $\Gamma$.
\end{proof}

\introEPP*

\begin{proof}
By Lemma \ref{lem:rayless_tree-decomposition} there is a tree-decomposition $(T, \cV)$ of $G$ into finite parts such that $T$ is rayless.
Let $U$ denote the $\aleph_0$-closure in $T$ of the root of $T$ and note that $U$ is finite by Lemma~\ref{lem:kappa_closure}.
Consider the function $f : \NN \to \NN, k \mapsto |G(U)| \cdot (k - 1)$, let $k \in \NN$, and let $\Gamma$ be a graph such that $\Gamma - X$ contains a copy of $G$ for all $X \subseteq V(\Gamma)$ of size at most $f(k)$.
We show that $\Gamma$ contains a $T$-horde $(f_i : i \in \{ 1, \dots, k \})$; then $\Gamma$ contains $k$ disjoint copies of $G$ and the proof is complete.

We begin by finding a $U$-horde $(f'_i : i \in \{ 1, \dots, k \})$ in $\Gamma$.
We define $f'_i$ for $i \leq k$ recursively, so let $i \leq k$ and suppose that $f'_j : G(U) \to \Gamma$ has been defined for all $j < i$.
Since the set $X := \bigcup \{ f_j(G(U)) : j < i\}$ has size at most $f(k)$, there is a subgraph embedding $g: G \to \Gamma$ whose image avoids $X$.
We set $f'_i := g \upharpoonright G(U)$, completing the recursive construction.

By Lemma \ref{lem:extending_hordes} (with $\mu = \aleph_0$), we can extend the $U$-horde $(f'_i : i \in \{ 1, \dots, k \})$ to the desired $T$-horde $(f_i : i \in \{ 1, \dots, k \})$ in $\Gamma$.
\end{proof}

Finally, we deduce Corollaries \ref{cor:excluded_path_EPP} and \ref{cor:excluded_path_kappa-EPP} using the following result of Jia \cite{jia2015excluding}:

\begin{thm}\label{thm:excluded_paths_lwqo}
For every $n \in \NN$, the class of all graphs that do not contain a path of length $n$ as a subgraph is lwqo.
\end{thm}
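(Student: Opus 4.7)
The plan is to prove this by induction on $n$, invoking the structural fact that $P_n$-freeness implies bounded tree-depth (with bound $d = d(n)$), together with the lwqo of labelled rooted forests of bounded depth. First I would reduce to connected graphs: a $P_n$-free graph is the disjoint union of its connected $P_n$-free components, so by Higman's lemma on indexed families it is enough to show that the class of connected $L$-labelled $P_n$-free graphs is wqo by label-preserving subgraph embeddings. To each such $G$ I would associate a rooted forest $F_G$ on $V(G)$ of depth at most $d$ in which every edge of $G$ connects an ancestor to a descendant (the tree-depth decomposition, which exists because $G$ is $P_n$-free). I relabel each vertex $x$ of $F_G$ by the pair consisting of its original $L$-label and the subset of its (at most $d$) ancestors in $F_G$ to which $x$ is adjacent in $G$; as $d$ is a fixed constant, these augmented labels lie in a finite alphabet $L'$. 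The graph $G$ is recoverable from the labelled forest, and a label-preserving rooted-subtree embedding $F_G \hookrightarrow F_{G'}$ induces a label-preserving subgraph embedding $G \hookrightarrow G'$.

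It then suffices to show that rooted forests of depth $\leq d$ with labels in a finite set are lwqo under label-preserving rooted-subtree embeddings. This goes by induction on $d$: a depth-$d$ rooted tree is a root together with an indexed family of depth-$(d-1)$ subtrees rooted at its children. For finite trees this is Kruskal's theorem (with Higman labels). For arbitrary (possibly uncountable) trees one passes to better-quasi-ordering and invokes the Nash-Williams / Laver theorem that bqo is preserved under taking such indexed families, together with the fact that finite alphabets are trivially bqo; this lifts the property from depth $d-1$ to depth $d$. One final application of Higman's lemma then assembles connected components back into disconnected graphs.

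The main obstacle is functoriality of the assignment $G \mapsto F_G$: the tree-depth decomposition is not canonical, so a fixed choice for $G$ need not embed into a fixed choice for $G'$ merely because $G \hookrightarrow G'$. Ding's trick in the finite case is to work with all tree-depth decompositions of $G$ simultaneously, so that compatibility of the decompositions can be extracted from any graph embedding; this fits cleanly with the Kruskal-style bqo machinery. Jia's contribution is to push this manoeuvre through for infinite graphs, where $F_G$ may branch uncountably but remains uniformly bounded in depth by $d$ — precisely the regime in which the Nash-Williams / Laver bqo induction on bounded-depth trees sketched above replaces Kruskal's theorem and makes the argument go through without cardinality restrictions.
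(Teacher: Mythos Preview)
The paper does not actually prove this statement: it is quoted verbatim as a result of Jia \cite{jia2015excluding} (extending Ding \cite{ding1992subgraphs} from finite to arbitrary graphs) and used as a black box. So there is no ``paper's own proof'' to compare against; your sketch is essentially an outline of the Ding--Jia argument itself, and the first two paragraphs are broadly correct: encode each $P_n$-free graph by a bounded-depth elimination forest with augmented finite labels recording the back-edges, then use a bqo induction on depth (Higman/Nash-Williams style) to well-quasi-order the forests.

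Your third paragraph, however, manufactures an obstacle that is not there. You already stated the only implication you need: a label-preserving rooted-forest embedding $F_G \hookrightarrow F_{G'}$ induces a label-preserving subgraph embedding $G \hookrightarrow G'$. To conclude that the graphs are wqo you simply pick \emph{one} elimination forest per graph, apply the forest wqo to get $F_{G_i} \hookrightarrow F_{G_j}$ for some $i<j$, and deduce $G_i \hookrightarrow G_j$. The reverse implication---that a graph embedding should lift to an embedding of the chosen forests---is never used, so the non-canonicity of $G \mapsto F_G$ is irrelevant, and there is no need to ``work with all tree-depth decompositions simultaneously''. The genuine extra work in passing from Ding to Jia lies elsewhere: for infinite graphs one must (i) check that a connected $P_n$-free graph still admits an elimination forest of height bounded by a function of $n$ (DFS trees are not straightforwardly available), and (ii) replace Kruskal's theorem by a bqo argument so that the induction on depth survives arbitrary branching. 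You gesture at (ii) correctly; (i) is what your sketch leaves unaddressed.
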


\excludedpathEPP*

\begin{proof}
Since no (induced) subgraph of $G$ contains a path of length $n$ as a subgraph, the set of induced subgraphs of $G$ is lwqo by Theorem \ref{thm:excluded_paths_lwqo}.
Thus $G$ has the EPP by Theorem \ref{thm:intro_EPP}.
\end{proof}

\excludedpathkappaEPP*

\begin{proof}
Since no (induced) subgraph of $G$ contains a path of length $n$ as a subgraph, the set of induced subgraphs of $G$ is lwqo by Theorem \ref{thm:excluded_paths_lwqo}.
If $\kappa = \aleph_0$, then $G$ has the $\kappa$-EPP by Theorem~\ref{thm:intro_aleph_0-EPP}.
If $\kappa$ is uncountable, we apply Lemma \ref{lem:rayless_tree-decomposition} to find a tree-decomposition of $G$ into finite parts.
Then $G$ has the $\kappa$-EPP by Lemma \ref{thm:intro_kappa-EPP}.
\end{proof}

\section{Classes defined by topological minors}
\label{sec:top_minors}

A \emph{topological minor embedding} of a graph $H$ into a graph $G$ is a map $f$ with domain $V(H) \cup E(H)$ that maps every vertex of $H$ to a vertex of $G$ and every edge of $H$ to a path in $G$ such that:
\begin{itemize}
\item $f \upharpoonright V(H)$ is injective,
\item $f(vw)$ is an $f(v)$--$f(w)$ path in $G$ for every edge $vw \in E(H)$,
\item for all $e \in E(H)$, the set of inner vertices of the path $f(e)$ does not contain any vertex $f(v)$ for $v \in V(H)$ or any inner vertex of a path $f(e')$ for $e \neq e' \in E(H)$.
\end{itemize}

Let $L$ be any finite set and suppose that the vertices of the graphs $H$ and $G$ are labelled by elements of $L$.
We say that $f$ is \emph{label-preserving} if every vertex $v$ of $H$ has the same label as $f(v)$.
Furthermore, we say that a class $\cG$ of graphs is \emph{lwqo by topological minors}, if for every finite set $L$, the class of all graphs from $\cG$ with vertices labelled by elements of $L$ is wqo by label-preserving topological minor embeddings.

With this definition, we can state the following version of Theorems \ref{thm:intro_aleph_0-EPP} and \ref{thm:intro_kappa-EPP} for topological minors:

\begin{thm}\label{thm:main_thm_for_top_minors}
Let $G$ be any graph such that the set of induced subgraphs of $G$ is lwqo by topological minors.
\begin{itemize}
\item If $G$ is rayless, then $\cT(G)$ has the $\aleph_0$-EPP.
\item If $G$ admits a tree-decomposition into finite parts, then $\cT(G)$ has the $\kappa$-EPP for every uncountable cardinal $\kappa$.
\end{itemize}
\end{thm}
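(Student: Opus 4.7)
The plan is to mirror the proofs of Theorems~\ref{thm:intro_aleph_0-EPP} and~\ref{thm:intro_kappa-EPP}, replacing subgraph embeddings by topological minor embeddings throughout the machinery of Section~\ref{sec:proof_of_main_thm}. I call a topological minor embedding $f : \overcirc{G}(\br_T(s)) \to \overcirc{G}(\br_T(s'))$ \emph{hinged at $t$} if every vertex $v$ of the source has the same $V_t$-neighbourhood in $G$ as $f(v)$. Labelling each vertex of each $\overcirc{G}(\br_T(s))$ by its $V_t$-neighbourhood (a subset of the finite set $V_t$) shows that the lwqo-by-topological-minors hypothesis yields, for every $t \in V(T)$, a wqo of $\{\overcirc{G}(\br_T(s)) : s \in \successor_T(t)\}$ by hinged topological minor embeddings. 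With this substitution the notion of $\kappa$-closure and Lemma~\ref{lem:kappa_closure} carry over verbatim. A $U$-horde becomes a family $(f_i)_{i \in I}$ of suitable topological minor embeddings $f_i : G(U) \to \Gamma$ whose $\Gamma$-images, that is, vertex images together with inner vertices of the paths realising edges, are pairwise disjoint.

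The first nontrivial task is to reprove Lemma~\ref{lem:regular_hordes} for topological minor embeddings. All steps of the original proof transfer; the only point requiring genuinely new work is the successor step in which $f_\l^\alpha$ is extended across the new branch $\br_{U_\l^\alpha}(u_\beta)$. At that moment one has a topological minor embedding $g : G(\br_T(t)) \to \Gamma$ (from suitability of $f_\l^\beta$), a $\kappa$-sized set $B \subseteq \successor_T(t)$, and for each $b \in B$ a hinged topological minor embedding $h_b : \overcirc{G}(\br_T(u_\beta)) \to \overcirc{G}(\br_T(b))$. I would define $f_\l^\alpha$ on $\overcirc{G}(\br_{U_\l^\alpha}(u_\beta))$ via $g \circ h_{b^*}$ for a suitable $b^* \in B$: on vertices by straight composition, on an edge $vw$ with both endpoints in the new branch by concatenating the $g$-images of the edges on the $G$-path $h_{b^*}(vw)$, and on an edge $vw$ with $v \in V_t$ and $w$ in the new branch by taking the $g$-image of the $G$-edge $v\,h_{b^*}(w)$, which exists by hinged-ness of $h_{b^*}$. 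Since $g$ agrees with $f_\l^\beta$ on $V_t$, all endpoints match up.

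The hard part will be choosing $b^*$ so that the entire resulting image is disjoint from the horde-pieces already placed. This rests on two properties used in combination. First, for distinct $b, b' \in B$ the subgraphs of $G(\br_T(t))$ formed by $\overcirc{G}(\br_T(b))$ together with the edges from $V_t$ to $\overcirc{G}(\br_T(b))$ are edge-disjoint and share no vertices outside $V_t$. Second, the definition of a topological minor embedding forces $g$ to map distinct vertices to distinct vertices and to assign internally disjoint paths to distinct edges. Combining these, the portions of the $g$-images of these ``branch-plus-incident-edges'' subgraphs lying outside $g(V_t)$ are pairwise disjoint for different $b \in B$. Since $|B| = \kappa$ while the obstacles---the previously placed horde-images, and, where applicable, the bad set coming from the EPP-hypothesis on $\Gamma$---have size less than $\kappa$, a pigeonhole argument produces an admissible $b^*$. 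Tracking the internal-vertex-disjointness condition is the chief technical annoyance but introduces no new idea. Once Lemma~\ref{lem:regular_hordes} has been adapted, Lemmas~\ref{lem:uncountable_hordes} and~\ref{lem:extending_hordes} transfer word-for-word, since their proofs use only Lemma~\ref{lem:regular_hordes}, $\kappa$-closure and $U$-representations (Lemma~\ref{lem:U-representation}).

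Finally, Theorem~\ref{thm:main_thm_for_top_minors} is deduced exactly as the two original main theorems. For the $\aleph_0$-EPP I take a tree-decomposition of $G$ into finite parts with $T$ rayless via Lemma~\ref{lem:rayless_tree-decomposition}, form the $\aleph_1$-closure $U$ of the root, obtain an $\aleph_0$-indexed $U$-horde from the topological-minor analogue of Lemma~\ref{lem:regular_hordes}~\ref{item:Gamma-X}, and extend it to a $T$-horde via Lemma~\ref{lem:extending_hordes}. For the $\kappa$-EPP with $\kappa$ uncountable I use the given tree-decomposition into finite parts, form the $\kappa^+$-closure of the root, obtain a $\kappa$-indexed $U$-horde via the topological-minor analogue of Lemma~\ref{lem:uncountable_hordes}, and again extend via Lemma~\ref{lem:extending_hordes}. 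In either case the images of such a $T$-horde are pairwise disjoint subgraphs of $\Gamma$, each containing $G$ as a topological minor and hence being a copy of a graph from $\cT(G)$.
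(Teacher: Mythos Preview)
Your proposal is correct and takes essentially the same approach as the paper: the paper's proof is itself only an outline instructing the reader to replace subgraph embeddings by topological minor embeddings throughout Section~\ref{sec:proof_of_main_thm}, define images and concatenation of such embeddings in the natural way, and note (via a footnote in the proof of Lemma~\ref{lem:regular_hordes}) that the successor step now requires finding suitable paths with disjoint inner-vertex sets. You have spelled out precisely this adaptation, including the composition $g \circ h_{b^*}$ and the pigeonhole choice of $b^*$, in more detail than the paper does.
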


\begin{proof}[Proof outline]
The proof is essentially the same as the proof of Theorems \ref{thm:intro_aleph_0-EPP} and \ref{thm:intro_kappa-EPP} given in Section \ref{sec:proof_of_main_thm}, with the difference that all subgraph embeddings must be replaced by topological minor embeddings throughout Section \ref{sec:proof_of_main_thm}.
Additionally, some statements and some notations concerning subgraph embeddings must be transferred to the setting of topological minor embeddings in a natural way.
The notational changes include:
\begin{itemize}
\item Let $f$ be a topological minor embedding of a graph $G$ into a graph $H$ and let $G'$ be a subgraph of $G$.
We write $f(G')$ for the subgraph of $H$ consisting of all vertices $f(v)$ for $v \in V(G')$ and all paths $f(e)$ for $e \in E(G')$.
The \emph{image} of $f$ is the subgraph $f(G)$ of $H$.
\item Let $f$ be a topological minor embedding of a graph $G$ into a graph $H$ and $f'$ a topological minor embedding of $H$ into a graph $I$. The \emph{concatenation} $f' \circ f$ is the topological minor embedding $g$ of $G$ into $I$ such that:
\begin{itemize}
\item $g(v) = f'(f(v))$ for all $v \in V(G)$, and
\item $g(e)$ for $e \in E(G)$ is the path in $I$ obtained by concatenating all paths $f'(e')$ for $e' \in E(f(e))$.\qedhere
\end{itemize}
\end{itemize}
\end{proof}

Finally, we deduce Corollary \ref{cor:TT_has_EPP}. We need the following theorem by Laver \cite{laver1978better}:

\begin{thm}\label{thm:trees_are_lwqo}
The class of all trees is lwqo by topological minors.
\end{thm}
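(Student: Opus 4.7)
The plan is to derive the statement from Laver's 1978 theorem on better-quasi-orderings of trees; in this paper the theorem is invoked as a black box, and any self-contained argument would necessarily rebuild Nash-Williams' better-quasi-order (bqo) machinery.

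Recall that a quasi-order $(Q,\leq)$ is a \emph{better-quasi-order} (bqo), in Nash-Williams' sense, if a certain strengthening of the wqo condition (phrased using Borel maps from barriers in $[\NN]^{<\omega}$ into $Q$) holds. For this proposal, two consequences suffice. First, every bqo is wqo. Second, bqo enjoys much stronger closure properties than wqo: in particular, a finite set equipped with the equality relation is trivially bqo, because any infinite sequence of elements of a finite set contains an infinite constant subsequence.

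The central input is Laver's theorem~\cite{laver1978better}: if $(Q,\leq_Q)$ is bqo, then the class of rooted trees whose vertices are labelled by elements of $Q$ is bqo under topological minor embeddings $f$ satisfying $\mathrm{label}(v) \leq_Q \mathrm{label}(f(v))$ for every vertex $v$ of the domain. To prove the theorem, I would apply Laver's result with $(Q,\leq_Q) = (L,=)$ for a given finite label set $L$. The order-preservation condition then specialises to exact equality of labels, which is precisely the notion of a label-preserving topological minor embedding used in the paper's definition of lwqo. Consequently, the class of $L$-labelled rooted trees is bqo, hence wqo, under label-preserving topological minor embeddings, establishing the required lwqo property.

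The main obstacle in carrying out a self-contained proof is Laver's theorem itself, whose proof rests on Nash-Williams' minimal bad array technique together with a delicate analysis of trees via their branches $\br_T(x)$; since Laver's theorem is a standard result in the literature we invoke it directly, noting that the analogous unlabelled case is Nash-Williams' earlier theorem~\cite{nashwilliams1965well} mentioned in the introduction.
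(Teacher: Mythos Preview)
Your proposal is correct and matches the paper's approach exactly: the paper does not prove Theorem~\ref{thm:trees_are_lwqo} at all but simply cites it as Laver's result~\cite{laver1978better}, and your proposal correctly unpacks why Laver's bqo theorem (applied with $Q=(L,=)$, a finite set being trivially bqo) yields the lwqo statement. The extra detail you supply about the reduction is accurate and harmless, though in the paper's context a bare citation suffices.
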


\TThasEPP*

\begin{proof}
Combine Theorems \ref{thm:main_thm_for_top_minors} and \ref{thm:trees_are_lwqo}.
\end{proof}

\bibliographystyle{plain}
\bibliography{ref}

\end{document}